\numberwithin{equation}{section}
\newtheorem{thm}{Theorem}
\newtheorem{lem}{Lemma}
\newtheorem{prop}[thm]{Proposition}
\newtheorem{cor}[thm]{Corollary}
\newtheorem{defi}[thm]{Definition}
\newtheorem{conj}[thm]{Conjecture}
\newtheorem{exam}{Example}
\newtheorem{remark}{Remark}
\def\la{\lambda}
\def\cyc{\textrm{cyc}}
\def\lma{\mathrm{lma}}
\def\rmi{\mathrm{rmi}}
\def\remi{\mathrm{remi}}
\def\romi{\mathrm{romi}}
\def\lema{\mathrm{lema}}
\def\loma{\mathrm{loma}}
\def\romi{\mathrm{romi}}
\def\TB{\mathrm{dom}}
\def\dom{\mathrm{dom}}
\def\Orb{\textrm{Orb}}
\def\F{\textrm{first}}
\def\L{\textrm{last}}
\def\X{\mathcal{X}}
\def\Y{\mathcal{Y}}
\mathchardef\mhyphen="2D
\DeclareMathOperator\des{des}
\DeclareMathOperator\drop{drop}
\DeclareMathOperator\les{(31\mhyphen 2)}
\DeclareMathOperator\less{(13\mhyphen 2)}
\DeclareMathOperator\res{(2\mhyphen 13)}
\DeclareMathOperator\ress{(2\mhyphen 31)}
\def\312{\les}
\def\132{\less}
\def\213{\res}
\def\231{\ress}
\newcommand{\overbar}[1]{\mkern 1.5mu\overline{\mkern-1.5mu#1\mkern-1.5mu}\mkern 1.5mu}
\def\P{\mathcal{P}}
\def\dd{\rm {dd}}
\def\Z{\mathbb{Z}}
\def\N{\mathbb{N}}
\def\E{{\mathcal E}}
\def\S{{\mathfrak S}}
\def\blue{\textcolor{blue}}
\def\red{\textcolor{red}}
\newcommand{\bea}{\begin{align}} 
\newcommand{\ena}{\end{align}}
\newcommand\vartextvisiblespace[1][.5em]{%
  \makebox[#1]{%
    \kern.07em
    \vrule height.3ex
    \hrulefill
    \vrule height.3ex
    \kern.07em
  }
}
\newcommand{\x}{\vartextvisiblespace}
\def\a{\alpha}
\def\Z{\mathbb{Z}}
\def\N{\mathbb{N}}
\def\S{\mathfrak{S}}
\def\D{\mathfrak{D}}
\def\cyc{\mathop {\rm cyc}}
\def\f{\mathop {\rm fix}}
\def\mi{\mathop {\rm mo}}
\def\fd{\mathop {\rm fd}}
\def\snd{\mathop {\rm si}}
\def\mp{\mathop {\rm me}}
\def\fnd{\mathop {\rm fi}}
\def\sd{\mathop {\rm sd}}
\begin{document}

\title[Even-odd drop permutations]{
Cycles of even-odd drop permutations and  continued fractions  of
Genocchi numbers}\thanks{\today}
\author{Qiongqiong Pan and Jiang Zeng}
\address{College of Mathematics and Physics, Wenzhou University\\
Wenzhou 325035, PR China}
\email{qpan@math.univ-lyon1.fr}
\address{Universit\'e de Lyon,  Universit\'e Lyon 1,
UMR 5208 du CNRS, Institut Camille Jordan\\
F-69622, Villeurbanne Cedex, France}
\email{zeng@math.univ-lyon1.fr}
\begin{abstract} 
Recently, 
Lazar and Wachs (arXiv:1910.07651) showed that the  (median) Genocchi numbers play a fundamental role in the study of the homogenized Linial arrangement and obtained two new permutation models 
 (called D-permutations and E-permutations) for  (median) Genocchi numbers.
 They further  conjecture that the distributions of cycle numbers over the two models are equal. 
 In a follow-up,  
  Eu et al. (arXiv:2103.09130)  further proved the  gamma-positivity of 
 the descent polynomials of  
even-odd descent permutations, which are in bijection with 
 E-permutations by Foata's fundamental transformation.  This paper merges the above two papers by considering a general moment sequence which encompasses 
the number of 
 cycles and number of drops of E-permutations. 
  Using the  combinatorial theory 
of  continued fraction, the moment connection  enables us to confirm Lazar-Wachs' conjecture and  obtain   a natural 
$(p,q)$-analogue of Eu et al's descent polynomials. Furthermore,
 we show that  
the  $\gamma$-coefficients  of our $(p,q)$-analogue   of  descent polynomials have
 the same  factorization flavor as 
 the $\gamma$-coeffcients of Br\"and\'en's $(p,q)$-Eulerian polynomials. 
\end{abstract}
\subjclass[2010]{05A05, 05A15, 05A19, 33C45}

\keywords{even-odd permutation,  descent, median Genocchi number, normalized median Genocchi number, Genocchi number}
\maketitle 
 \vskip 0.5cm 
\tableofcontents
\section{Introduction}
The Genocchi numbers $G_{2n}$ and median Genocchi numbers $H_{2n+1}$ are well studied, and have seen recent attention and new combinatorial interpretations, see \cite{BN20, Du74, BD79, DR94,BJS10,  He19, LW20, Vi82} and \url{https://oeis.org/A005439}. 
These two allied sequences of numbers can be easily defined by    the  \emph{Seidel triangle}~\cite{Seidel1877}  through a
boustrophedon algorithm as follows:
\begin{figure}[h]
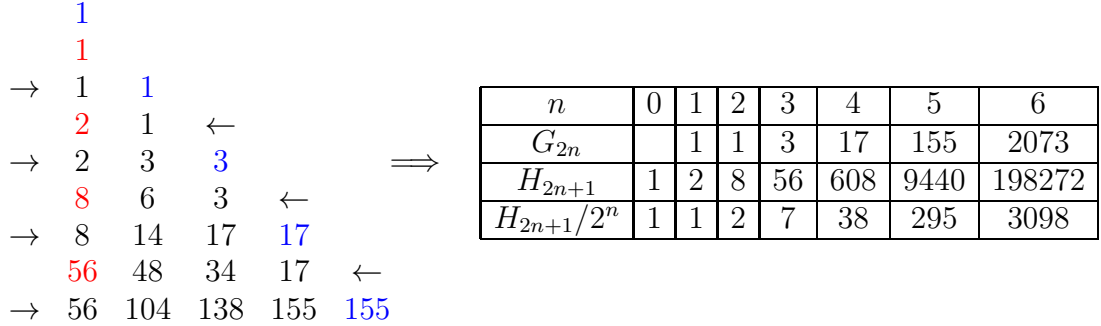

\begin{align*}
\begin{array}{cccccccc}
&\blue{1}&&&&&&\\
&\red{1}&&&&&&\\
\rightarrow&1&\blue{1}&&&&&\\
&\red{2}&1&\leftarrow&&&&\\
\rightarrow&2&3&\blue{3}&&&&\\
&\red{8}&6&3&\leftarrow&&&\\
\rightarrow&8&14&17&\blue{17}&&&\\
&\red{56}&48&34&17&\leftarrow&&\\
\rightarrow&56&104&138&155&\blue{155}&&\\
\end{array}\hspace{-1cm}
\Longrightarrow \quad
\begin{array}{|c|c|c|c|c|c|c|c|}
\hline
n&0&1&2&3&4&5&6\\
\hline
G_{2n}&&1&1&3&17&155&2073\\
\hline
H_{2n+1}&1&2&8&56&608&9440&198272\\
\hline
H_{2n+1}/2^{n}&{1}&{1}&{2}&{7}&{38}&{295}&{3098}\\
\hline
\end{array}
\end{align*}
\caption{The first values of the Genocchi numbers $G_{2n}$, median  Genocchi numbers $H_{2n+1}$ and normalized median Genocchi numbers $H_{2n+1}/2^n$ are 
tabulated in the right table.}
\end{figure}


In 2019 Hetyei~\cite{He19} introduced a hyperplane arrangement (called the homogenized Linial arrangement) and showed that its number of regions is a median Genocchi number. 
Lazar and Wachs \cite{LW19}  refined Hetyei's result by obtaining a combinatorial interpretation of the M\"obius function of this lattice in terms of varaiants of the Dumont permutations. 

A permutation $\sigma\in \S_{2n}$ is a \emph{D-permutation}  (resp. \emph{E-permutation}) 
if $i\leq \sigma(i)$ whenever $i$ is odd and
$i\geq \sigma(i)$ whenever $i$ is even (resp. 
if $i>\sigma(i)$ implies  $i$ is even and
$\sigma(i)$  $i$ is odd). An E-permutation is also called \emph{even-odd drop} permutation.
Introduce the set notations 
\begin{align*}
\D_{2n}&=\{\textrm{D-permutations on}\; [2n]\};\qquad 
\D\mathcal{C}_{2n}=\{\textrm{D-cycles on}\; [2n]\};\\
\E_{2n}&=\{\textrm{E-permutations on}\; [2n]\};\qquad 
\E\mathcal{C}_{2n}=\{\textrm{E-cycles on}\; [2n]\}.
\end{align*}
For example, 
\begin{multline*}
\D_{4}=\{(1)(2)(3)(4),\; (1,2)(3)(4), \; (1,4)(2)(3),\;
(3,4)(1)(2),\\
  (1,2)(3,4),\; (1,3,4)(2), \;(1,4,2)(3), (1,3,4,2)\};
  \end{multline*}
  and 
  \begin{multline*}
\E_{4}=\{(1)(2)(3)(4),\; (1,2)(3)(4), \; (1,4)(2)(3),\;
(3,4)(1)(2),\\
  (1,2)(3,4),\; (1,3,4)(2), \;(1,2,4)(3), (1,2,3,4)\}.
\end{multline*}
Here the permutations are written in cycle notations.

\begin{thm}[Lazar and Wachs \cite{LW19}] \label{thm:LW}
 For integer $n\geq 1$ we have 
\begin{align*}
H_{2n+1}&=|\D_{2n}|=|\E_{2n}|,\\
G_{2n}&=|\D\mathcal{C}_{2n}|.
\end{align*}
\end{thm}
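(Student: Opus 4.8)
The plan is to prove the three equalities separately, using as the sole engine the boustrophedon recursion that generates the Seidel triangle shown above, in which $H_{2n+1}$ sits on the left (red) border of the leftward rows and $G_{2n}$ on the right (blue) border of the rightward rows. Observe first that once $|\D_{2n}| = H_{2n+1}$ and $|\E_{2n}| = H_{2n+1}$ are each established, the identity $|\D_{2n}| = |\E_{2n}|$ is immediate; a direct statistic-preserving bijection between the two models is genuinely more delicate and, in the spirit of this paper, is better deferred to the finer cycle problem. So the real content reduces to two enumeration statements against the red border and one against the blue border.

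For $|\D_{2n}| = H_{2n+1}$ I would build a refined recursion in the style of the Fran\c{c}on--Viennot correspondence: encode each D-permutation by a history that records, as the letters of $[2n]$ are read off in increasing order, how many admissible slots are available at each step, and group these by a height statistic $j$. The defining inequalities $\sigma(i) \ge i$ for odd $i$ and $\sigma(i) \le i$ for even $i$ mean that inserting an odd letter forces a weak excedance while inserting an even letter forces a weak deficiency; since the parity of the inserted letter alternates, the direction in which the admissible slots accumulate alternates as well, and this is exactly the left-to-right / right-to-left alternation of the boustrophedon. I would then set $d_m^{(j)}$ to be the number of partial histories of height $j$ after $m$ steps, check the initial values against the apex of the triangle, and verify that one insertion reproduces one Seidel sweep, so that the total count $|\D_{2n}| = \sum_j d_{2n}^{(j)}$ equals the appropriate red-border value $H_{2n+1}$. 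The same scheme transfers verbatim to $\E_{2n}$ after rewriting the even-odd drop condition ``$\sigma(i) < i \Rightarrow i$ even and $\sigma(i)$ odd'' as an admissible-slot count at each insertion; as both histories obey the Seidel recursion this yields $|\E_{2n}| = H_{2n+1}$ and hence $|\D_{2n}| = |\E_{2n}|$.

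The identity $G_{2n} = |\D\mathcal{C}_{2n}|$ is where I expect the real difficulty, because restricting to a single cycle does not interact cleanly with letter insertion: a new letter may open, extend, or merge cycles, so the height statistic above does not descend to the connected objects. Two routes seem viable. The first keeps everything under one recursive roof: track cycle structure during the insertion and forbid the ``open a new cycle'' move, forcing each added letter to splice into the existing cycle; one then argues that suppressing exactly this move shifts the accumulation from the red border to the blue border, so that the connected count reads off $G_{2n}$. The second route is a direct bijection from $\D\mathcal{C}_{2n}$ onto a classical single-cycle Genocchi model of Dumont type, exploiting the structural facts that in any D-permutation every nontrivial cycle has an odd minimum and an even maximum (since $\sigma(m) > m$ forces $m$ odd and $\sigma(M) < M$ forces $M$ even). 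I would attempt the first route, falling back on the second if the recursion proves unwieldy.

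The genuine obstacle, in either route, is matching the boundary data rather than the bulk recursion: the interior of the boustrophedon is a routine cumulative sum, so the crux is to show that the single combinatorial move which is \emph{present} for all D-permutations but \emph{absent} for D-cycles is precisely what separates the red border ($H$) from the blue border ($G$), together with getting the apex values and the alternation of sweep directions to line up. The subtlety that makes this bookkeeping hard is that the D- and E-conditions depend on the \emph{absolute} parities of positions and values, and so are not invariant under order-isomorphism of a cycle's support; consequently the local cycle-by-cycle structure cannot be read off from relative order alone, and the connectedness constraint must be threaded carefully through the insertion.
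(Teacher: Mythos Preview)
This theorem is not proved in the present paper: it is stated with attribution to Lazar and Wachs \cite{LW19} and used as background for the new results. There is therefore no proof here against which to compare your attempt.

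On your proposal itself: what you have written is a plan, not a proof. The idea of encoding D- and E-permutations by insertion histories with a height parameter and matching the resulting recursion against the boustrophedon sweeps is a standard and, in outline, workable route to $|\D_{2n}|=|\E_{2n}|=H_{2n+1}$; but you have not actually defined the height statistic, written down the recursion, or verified the boundary values, so the argument remains schematic. For $G_{2n}=|\D\mathcal{C}_{2n}|$ you correctly flag that the single-cycle constraint does not interact cleanly with letter-by-letter insertion, and you offer two fallback routes without executing either. Your diagnosis that the crux is the boundary matching (which move separates the $H$-border from the $G$-border) is apt, but that is precisely the step you leave undone. In short: plausible strategy, no proof yet.
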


Lazar and Wachs~\cite[Conjecture 6.5]{LW19} also make the following conjecture.

 \begin{conj}[Lazar and Wachs]\label{conj:LW}
 The number of $D$-permutations on $[2n]$  with $k$ 
 cycles equals the number of $E$-permutations on
 $[2n]$ with $k$ cycles for all $k$. Consequently
 $$
 G_{2n}=|\E\mathcal{C}_{2n}|.
 $$
 \end{conj}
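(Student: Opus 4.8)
The final clause is an immediate consequence of the cycle-number equidistribution: since Theorem~\ref{thm:LW} gives $G_{2n}=|\D\mathcal{C}_{2n}|$, and $\D\mathcal{C}_{2n}$ (resp. $\E\mathcal{C}_{2n}$) is precisely the set of elements of $\D_{2n}$ (resp. $\E_{2n}$) having a single cycle, the identity $G_{2n}=|\E\mathcal{C}_{2n}|$ follows by specializing the conjectured equidistribution to $k=1$. The plan is therefore to prove the refined statement
$$
D_n(t):=\sum_{\sigma\in \D_{2n}}t^{\cyc(\sigma)}=\sum_{\sigma\in \E_{2n}}t^{\cyc(\sigma)}=:E_n(t),\qquad n\ge 1,
$$
by showing that the generating functions $\sum_n D_n(t)\,z^n$ and $\sum_n E_n(t)\,z^n$ share the same continued fraction expansion.

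My first step would be to encode each class by weighted lattice paths. Using a Foata--Zeilberger/Biane-type bijection, I would send a permutation of $[2n]$ to a labelled Motzkin path, classifying each $i\in[2n]$ as a cycle valley, cycle peak, double ascent, double descent, or fixed point according to the relative order of $\sigma^{-1}(i)$, $i$, $\sigma(i)$, and recording $\cyc(\sigma)$ by a weight $t$ attached to an appropriate class of steps (those corresponding to cycle openers). The parity conditions defining the two classes --- for $\D_{2n}$, odd $i$ are weak excedances and even $i$ weak deficiencies; for $\E_{2n}$, every drop sends an even index to an odd value --- then translate into restrictions on which step types are admissible in the odd versus even ``slots'' of the path, together with a prescription for how the weights alternate between these slots. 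The outcome should be that both classes map onto the same family of weighted paths, a priori with different weight functions.

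The second step is to extract the Jacobi continued fraction, i.e.\ to compute the coefficients $b_k(t)$ and $\lambda_k(t)$ for each model and to prove that they agree. Because we work over $[2n]$ it is natural to contract adjacent levels, so that the weights assemble into products of an odd-slot factor and an even-slot factor, reducing at $t=1$ to the known Stieltjes-type continued fraction for the median Genocchi numbers $|\D_{2n}|=|\E_{2n}|=H_{2n+1}$. Establishing $b_k^{\D}(t)=b_k^{\E}(t)$ and $\lambda_k^{\D}(t)=\lambda_k^{\E}(t)$ then forces $D_n(t)=E_n(t)$, since a sequence is uniquely determined by the coefficients of its continued fraction.

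I expect the main obstacle to lie in the first step: arranging the path encoding so that $\cyc$ survives intact under the parity restrictions, and then checking that the two weight functions yield \emph{identical} continued-fraction coefficients rather than merely identical values at $t=1$. A clean way to organize this is to set up a single master weighted-path model --- the general moment sequence alluded to in the abstract --- carrying the cycle variable $t$ (and, for the $(p,q)$-refinement, crossing and nesting variables), prove one continued fraction for it, and then realize both $\D_{2n}$ and $\E_{2n}$ as specializations of this model; for $\E_{2n}$ this passes through Foata's fundamental transformation, which turns cycles into records and connects the computation with Eu et al.'s even-odd descent permutations. The desired equidistribution of $\cyc$ is then a corollary of the two specializations sharing the same moments.
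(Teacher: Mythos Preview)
Your high-level strategy matches the paper's: prove that $\sum_\sigma t^{\cyc\sigma}$ agrees over $\D_{2n}$ and $\E_{2n}$ by showing the two ordinary generating functions have the same J-fraction.  On the E-side you are exactly right, and this is what the paper does: pass through Foata's fundamental transformation to the even-odd descent permutations $\X_{2n}$, apply the bijection of Section~3 to labelled Motzkin paths, and specialize the master continued fraction (Theorem~\ref{master-cf}) at $a=\bar a=z$, $b=\bar b=p=q=y=1$ to obtain the J-fraction of Corollary~\ref{cf-e-cyc}.

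Where you diverge from the paper is on the D-side.  You propose a direct Biane/Foata--Zeilberger encoding of D-permutations as weighted paths, with the parity constraints becoming step restrictions on odd versus even slots, and you expect both classes to arise as specializations of a \emph{single} master path model.  The paper does \emph{not} do this, and there is no such single model in the paper: Theorem~\ref{master-cf} governs only $\X_{2n}\leftrightarrow\E_{2n}$.  For $\D_{2n}$ the paper instead imports two external ingredients: Lazar--Wachs's bijection (Lemma~\ref{lem:LW}) from $\D_{2n}$ to surjective pistols on $[2n+2]$ with no even maximum, under which cycle maxima become fixed points of the pistol, and the Randrianarivony--Zeng continued fraction (Lemma~\ref{RZ}) for the generalized Dumont--Foata polynomials $\Gamma_n$.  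The pistol bijection yields $\sum_{\sigma\in\D_{2n}} z^{\cyc\sigma}=\Gamma_{n+1}(z,z,1,0,z,1)$, Lemma~\ref{RZ} supplies its J-fraction explicitly, and one then checks term-by-term that the coefficients agree with those of Corollary~\ref{cf-e-cyc} at $t=1$.

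So your plan is sound in outline but underestimates the D-side: a direct path encoding of D-permutations tracking $\cyc$ is a genuine piece of work (cycle openers do not automatically survive the parity restrictions in the way you suggest), and is not simply a second specialization of the E-model.  If you want to follow the paper, you should invoke Lemmas~\ref{lem:LW} and~\ref{RZ} as black boxes rather than attempt a fresh Biane-type bijection for $\D_{2n}$.
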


In this paper we will take a different approach to 
enumerate the cycles and drops of the E-permutations and prove 
Lazar-Wachs' conjecture. A \emph{descent} of a permutation $\sigma$ on a finite set of positive integers is a pair $(\sigma_i, \sigma_{i+1})$ for which
$\sigma_i> \sigma_{i+1}$. 
We call $\sigma_i$ (resp. $\sigma_{i+1}$) the  descent top (resp. descent bottom). A descent pair  $(\sigma_i, \sigma_{i+1})$ is called \emph{even-odd} if $\sigma_i$ is even and $\sigma_{i+1}$ is odd. 
The number of descents of $\sigma$ is denoted by 
$\des\sigma$. Moreover,  the value $\sigma_i$ is said to be 
\begin{itemize}
\item a \emph{left-to-right maxima}, if $\sigma_j<\sigma_i$ for all $j<i$; 
\item a \emph{right-to-left minimum}, if $\sigma_i<\sigma_j$ for all $j>i$.
\end{itemize}
Let $\lma\,\sigma$ and 
 $\rmi\,\sigma$ be the numbers of left-to-right maxima and  right-to-left minima. 
 We further denote by $\lema\,\sigma$ and $\loma\,\sigma$  the numbers of \emph{even} and \emph{odd}
 left-to-right maxima. Similarly we define 
 $\remi\,\sigma$ and  $\romi\,\sigma$  to be 
 the numbers of \emph{even} and \emph{odd}
 right-to-left mnima.

Let $\X_{2n}$ be the set of  permutations in $\S_{2n}$ that contain only even-odd descents. A permutation in $\mathcal{X}_{2n}$ is called an \emph{even-odd descent permutation}.
For $n=1, 2$, we have 
 $\X_2=\{12, \red{2}1\}$ and 
$$
\X_4=\{1234,\; \red{2}134,\; 12\red{4}3,\; 3\red{4}12,\; \red{4}123,\; 23\red{4}1,\; 2\red{4}13,\; \red{2}1\red{4}3\}.
$$

Note that one version of Foata's first fundamental transformation $\varphi$
 takes drops to descents and maxima  of cycles to left-to-right maxima, i.e.,
 \begin{align}\label{cyc-lma}
(\drop, \cyc)\sigma=(\des, \lma)\varphi(\sigma)\quad \textrm{for}\; \sigma \in \S_n.
\end{align}
Here $\drop\,\sigma$ and $\cyc\,\sigma$ denote the numbers of drops and cycles of $\sigma\in \S_n$, respectively.
Hence, by restriction  $\varphi$ sets up  a bijection 
 from $\E_{2n}$ to $\X_{2n}$, and  the cardinality of $\X_{2n}$ equals the median Genocchi number $H_{2n+1}$.

An even-odd descent permutation  $\sigma\in \X_{2n}$ is called  
$\mathcal{E}$-permutation if for any integer $j\in \{1, \ldots, n\}$ the two entries $2j-1$ and $2j$  are not  simultanenously 
a descent bottom  and a descent top 
of $\sigma$.\footnote{ It is easy  to see that 
our definition of $\mathcal{E}$-permutations is equivalent to the \emph{primary even-odd descent permutations} in Eu et al. \cite{EFLL21}.}
Let $\overbar{\X}_{2n}$  be the set of $\mathcal{E}$-permutations 
in ${\X}_{2n}$ and $\overbar{\X}_{2n, k}$ be the subset consisting of permutations with $k$ descents.
The first few  $\mathcal{E}$-permutations  are
\begin{align*}
\overbar{\X}_{2,0}&=\{12\},\quad 
\overbar{\X}_{4,0}=\{1234\}, \quad
\overbar{\X}_{4,1}=\{3412,4123,2341,2413\}\\
\overbar{\X}_{6,0}&=\{123456\},\quad
\overbar{\X}_{6,1} =\{124563,124635,125634,126345,234156,241356,412356,\\
&\qquad 341256, 261345,361245, 461235,561234,236145,246135,256134,346125,356124,\\
&\qquad 456123,234615,235614,245613,345612,234561,612345\}.
\end{align*}
Note that  the even-odd descent permutation $\sigma=24163785$ is not an 
$\mathcal{E}$-permutation.
 
Eu-Fu-Lai-Lo~\cite[Theorem 1.2]{EFLL21}  recently studied the descent polynomials of
even-odd descent permutations, and proved, among other things,  the following result.
\begin{thm}[Eu et al. \cite{EFLL21}]\label{eu-thm} We have 
\begin{align}\label{eu-gamma}
X_n(t):=\sum_{\sigma\in \mathcal{X}_{2n}}t^{\des \sigma}=\sum_{k=0}^{\lfloor n/2\rfloor}|\overbar{\X}_{2n,k}|
 t^k(1+t)^{n-2k}.
\end{align}
\end{thm}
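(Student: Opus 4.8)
The first equality in \eqref{eu-gamma} is the definition of $X_n(t)$, so the content is the second equality, a $\gamma$-expansion whose coefficients are prescribed combinatorially. The plan is to produce a group action on $\mathcal{X}_{2n}$ — a pair-coupled variant of the Foata--Strehl (valley-hopping) action — whose orbits are indexed by the $\mathcal{E}$-permutations $\overbar{\X}_{2n}$ and on each of which the generating polynomial of $\des$ equals the corresponding $\gamma$-factor $t^k(1+t)^{n-2k}$. First I would record how a pair $\{2j-1,2j\}$ can sit inside $\sigma\in\mathcal{X}_{2n}$: since every descent is even-odd, an even letter is never a descent bottom and an odd letter is never a descent top, so $2j$ is a descent top exactly when its right neighbour is a smaller letter and $2j-1$ is a descent bottom exactly when its left neighbour is a larger letter. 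Call pair $j$ \emph{bound} if $2j-1$ is a descent bottom or $2j$ is a descent top, and \emph{free} otherwise. The $\mathcal{E}$-permutation condition is precisely that no $j$ is bound on both sides; hence in an $\mathcal{E}$-permutation with $k$ descents the $k$ descents have pairwise distinct top-providing and bottom-providing pairs, so $2k$ pairs are used, exactly $n-2k$ are free, and $k\le\lfloor n/2\rfloor$.

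Next I would define, for each $j\in[n]$, an involution $\psi_j$ on $\mathcal{X}_{2n}$ that is the identity when pair $j$ is bound and toggles an intra-pair descent when $j$ is free. When $j$ is free both $2j-1$ and $2j$ are double ascents (smaller neighbour on the left, larger on the right, or a boundary); \emph{activation} transports $2j$ to the slot immediately left of $2j-1$, creating the even-odd descent $(2j,2j-1)$, and \emph{deactivation} returns $2j$ to the unique position restoring the double-ascent shape. The $\psi_j$ commute, so together they define a $\Z_2^{\,n}$-action; the generators acting trivially on $\sigma$ are exactly those indexed by bound pairs, so the orbit of $\sigma$ has size $2^{f(\sigma)}$ with $f(\sigma)$ the number of free pairs. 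Each orbit contains a unique element having no intra-pair descent — equivalently all of whose descents are inter-pair — and this element is exactly the $\mathcal{E}$-permutation representing the orbit.

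To finish I would sum. If $\bar\sigma\in\overbar{\X}_{2n,k}$ represents an orbit, then $\bar\sigma$ has $k$ descents and $n-2k$ free pairs, and toggling a subset $S$ of the free pairs changes $\des$ from $k$ to $k+|S|$ while staying inside $\mathcal{X}_{2n}$; hence $\sum_{\sigma\in\Orb(\bar\sigma)}t^{\des\sigma}=t^k(1+t)^{n-2k}$. Summing over orbits and grouping representatives by their descent number $k$ gives $X_n(t)=\sum_{k=0}^{\lfloor n/2\rfloor}|\overbar{\X}_{2n,k}|\,t^k(1+t)^{n-2k}$. Palindromicity of $X_n(t)$ about $n/2$ then comes for free, each orbit contributing a palindrome of degree $n$.

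The main obstacle is the well-definedness of the coupled move $\psi_j$. In contrast to the classical valley-hopping step, activation may have to carry the even letter $2j$ past letters larger than $2j$ in order to seat it beside its partner $2j-1$, so one must verify that neither activation nor deactivation ever produces a forbidden (odd-even, even-even or odd-odd) descent, that $\psi_j$ is a genuine involution whose deactivation target is canonical, that distinct free pairs never interfere, and above all that the set of bound pairs and the inter-pair (structural) descents are preserved, so that the orbit representative is well defined and is an $\mathcal{E}$-permutation. Establishing these invariances — rather than the $\gamma$-bookkeeping — is where the real work lies. A route that bypasses the coupled action altogether is to insert the joint statistic into the continued-fraction/moment machinery developed later in the paper and to read the $\gamma$-expansion off the shape of the associated $J$-fraction.
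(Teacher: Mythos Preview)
Your overall strategy --- a $\Z_2^{\,n}$-action on $\X_{2n}$ whose orbits are indexed by $\mathcal{E}$-permutations, followed by summing $t^{\des}$ over each orbit --- is exactly the route the paper takes in Section~5 (where the action is called \emph{Inter-hopping}), and your closing remark about reading the $\gamma$-expansion off the $J$-fraction is in fact the paper's \emph{primary} derivation (compare Corollary~\ref{cor:gammacf} with the $y=0$ specialization of Theorem~\ref{master-cf}). But two concrete pieces of your implementation break.

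First, the free/bound dichotomy is miscalibrated. You call pair $j$ bound when $2j-1$ is a descent bottom \emph{or} $2j$ is a descent top; after your activation creates the descent $(2j,2j-1)$ the pair is bound by this rule, so $\psi_j$ reverts to the identity and $\psi_j^2\neq\mathrm{id}$. The paper instead declares a pair \emph{free} when both of $2j-1,2j$ lie in $S_\sigma$ or neither does, and the involution toggles between these two free states while fixing the ``exactly one in $S_\sigma$'' pairs. Second --- and this is the real obstruction --- your move ``transport $2j$ to the slot immediately left of $2j-1$'' is not injective, so even with the corrected dichotomy it cannot be an involution. In $\X_8$ the permutations $\sigma_1=2\,4\,5\,6\,1\,3\,7\,8$ and $\sigma_2=2\,5\,6\,1\,3\,4\,7\,8$ both have pair $\{3,4\}$ free, yet moving $4$ just left of $3$ sends each to $2\,5\,6\,1\,4\,3\,7\,8$. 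Dually, the state ``both in $S_\sigma$'' need not have $2j$ adjacent to $2j-1$ (e.g.\ $4\,1\,2\,6\,3\,5\in\X_6$), so there is no canonical ``double-ascent slot'' for deactivation. The paper's Inter-hopping action fixes this not by patching your move but by a genuinely different block rearrangement: one factors $\sigma=\cdots\beta_0\, w_i\,\alpha_1\beta_1\cdots\alpha_d\beta_d\,w_j\,\alpha_{d+1}\cdots$ with $\{w_i,w_j\}=\{2r-1,2r\}$ and the $\alpha$'s (resp.\ $\beta$'s) maximal runs above (resp.\ below) $2r$, then cyclically shifts the $\alpha/\beta$ pattern. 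You rightly flag well-definedness as the main obstacle; the point is that your proposed move already fails it.
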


An expansion  of type \eqref{eu-gamma} is called the  $\gamma$-expansion of the polynomial $X_n(t)$ in the literature~\cite{At16,Br15}. Indeed,  
a polynomial with real coefficients $h(t)=\sum_{i=0}^nh_it^{i}$ is said to be \emph{palindromic} if 
 $h_i=h_{n-i}$ for $0\leq i\leq n/2$. It is 
 known that any palindromic polynomial can be written  uniquely in the form $\sum_{i=0}^{\lfloor(n-1)/2\rfloor}\gamma_it^{i}(1+t)^{n-2i}$. The coefficients $\gamma_i$ are called the $\gamma$-coefficients of $h(t)$. If the $\gamma$-coefficients $\gamma_i$ are all nonnegative then we say that $h(t)$ is  $\gamma$-positive. Note that the $\gamma$-positivity of 
 $h(t)$ implies the symmetry and unimodal property of the coefficients $(h_0, \ldots, h_n)$.
 A prototype of $\gamma$-positive polynomials is 
  the Eulerian polynomials  $A_n(t)$, which  are the descent polynomials of permutations in $\S_n$.
 The following expansion of Eulerian polynomials is  known~\cite{FS70, PZ21}:
 \begin{align}\label{fs-gamma}
 A_n(t):=\sum_{\sigma\in \S_n} t^{\des \sigma}=
 \sum_{k=0}^{\lfloor (n-1)/2\rfloor}2^k d_{n,k}\,t^k(1+t)^{n-1-2k},
 \end{align}
 where $d_{n,k}$ is the number of Andr\'e permutations in $\S_n$ with $k$ descents.
 
 This paper stems from the  observation that
 the generating function of the 
 descent polynomials $X_n(t)$ has a similar  continued fraction expansion as the Eulerian polynomials and  the cardinality $|\overbar{\X}_{2n,k}|$ in \eqref{eu-gamma} is divisible by  $4^k$, consequently \eqref{eu-gamma} would  imply a refinement of the normalized median Genocchi numbers
\begin{align}
 H_{2n+1}/2^n=\sum_{k=0}^{\lfloor n/2\rfloor}|\overbar{\X}_{2n,k}|/4^k.
\end{align}

In 2008 Br\"and\'en~\cite{Br08} studied a $(p,q)$-analogue of Eulerian polynomials and proved that 
 his generalized Eulerian polynomials have a
 $\gamma$-positive expansion by modifying Foata and Strehl's \emph{valley-hopping action}. 
  At the end of his paper Br\"and\'en~\cite{Br08}  speculated that 
  the corresponding $\gamma$-coefficients have a factor $(p+q)^k$, which is a $(p,q)$-analogue of $2^k$. Shin and Zeng~\cite{SZ12} confirmed   the conjectured  divisibility of $\gamma$-coefficients 
by using Flajolet-Viennot's combinatorial theory of orthogonal polynomials \cite{Fl80, viennot1984} and proved that the corresponding quotient $d_{n,k}(p,q)$ is a polynomial in $p$ and $q$ with nonegative integer coefficients. Finally, motivated by an open problem of Han~\cite{Han20} about   $q$-analogue of Euler numbers, the two authors \cite{PZ21} came up with a
 combinatorial interpretation for  $d_{n,k}(p,q)$ 
 by refining the Andr\'e permutation interpretation for $d_{n,k}$.

In this paper we shall  consider a natural $(p,q)$-analogue of the descent  polynomials $X_n(p,q,t)$ and  prove that the $\gamma$-coefficient $\gamma_{n,k}(p,q)$ of $X_n(p,q,t)$ has a factor  $(p+q)^{2k}$, and provide a combinatorial interpretation for $\gamma_{n,k}(p,q)/(p+q)^{2k}$, see Theorem~\ref{thm:main-pq}. It turns out that our approach can catch more permutation 
statistics over even-odd descent permutations. 
 For $j\in[n]$, the doubleton $\{2j-1,2j\}$ is called a \emph{domino} of  
 $\sigma\in\X_{2n}$ if  $2j$ and $2j-1$ are descent top and descent bottom of $\sigma$, respectively.
Denote by $\TB(\sigma)$  the number of dominos of $\sigma$.
For $\sigma=\sigma_1\sigma_2\ldots\sigma_n\in\S_n$, the statistic $\312\sigma$ is the number of pairs $(i,j)$ such that $2\leq i<j\leq n$ and $\sigma_{i-1}>\sigma_j>\sigma_i$.
The pair $(\sigma_{i-1}, \sigma_i)$ is called a \emph{left-embracing} of $\sigma_j$. Similarly, the statistic $\231\sigma$ is the number of pairs $(i,j)$ such that $1\leq i<j\leq n-1$ and $\sigma_j>\sigma_i>\sigma_{j+1}$. The pair $(\sigma_j, \sigma_{j+1})$ is called a 
 \emph{right-embracing} of $\sigma_i$.

We refine  the  median Genocchi numbers 
by  the octuple-variable  polynomials
\begin{align}\label{x-even-odd}
X_n:&=X_n(a,\bar a, b, \bar b, p,q,y, t)\nonumber\\
&=\sum_{\sigma\in \mathcal{X}_{2n}}
a^{\lema\,\sigma}\bar a^{\loma\,\sigma}b^{\romi\,\sigma}\bar b^{\remi\,\sigma}
p^{\231\sigma }q^{\312\sigma }y^{\TB\, \sigma}t^{\des \sigma},
\end{align}
which reduces to  $X_n(t)$ when $a=\bar a=b=\bar b=y=1$.
For example,  
\begin{align*}
X_1&=ab(\bar a\bar b+ty),\\
X_2&=a^2\bar a^2 b^2\bar b^2+2a^2\bar ab^2\bar b ty
+a\bar a b\bar bpqt+ab^2\bar bq^2t+a^2\bar ab p^2t+a^2b^2pqt+a^2b^2t^2y^2.
\end{align*}
We will prove  an explicit J-fraction expantion (Theorem~\ref{master-cf}) for the ordinary  generating function of $X_n$, and prove $(p,q)$-analogue of Theorem~\ref{eu-thm} and 
 Lazar-Wachs' conjecture as applications.

The Genocchi numbers $G_{2n}$ can also be defined by  the exponential generating function
$
\sum_{n=1}^{\infty}G_{2n}\frac{x^{2n}}{(2n)!}=x\tan \frac{x}{2},
$
while
the median Genocchi numbers do not seem to have a raisonable  exponential generating function. Our method 
 relies on the   S-fraction expansions of 
the ordinary generating functions  of Genocchi and median Genocchi numbers (see \cite{Vi82, DZ94}):
 \begin{subequations}
\begin{align}
\sum_{n=0}^{\infty}G_{2n+2}x^{n}=\cfrac{1}{
1  - \cfrac{1^2\cdot  x}{
1 - \cfrac{1\cdot 2\cdot  x}{
1-\cfrac{2^2\cdot x}{
1-\cfrac{2\cdot 3\cdot x}{
\cdots}}}}}=1+x+3x^2+17 x^3+\cdots,\label{v-genocchi}\end{align}
\begin{align}
1+\sum_{n=0}^{\infty}H_{2n+1}x^{n+1}&=\cfrac{1}{
1  - \cfrac{1^2\cdot  x}{
1 - \cfrac{1^2\cdot  x}{
1-\cfrac{2^2\cdot x}{
1-\cfrac{2^2\cdot x}{
\cdots}}}}}=1+x+2x^2+8x^3\cdots.\label{v-mgenocchi}
\end{align}
\end{subequations}
By the general theory of orthogonal polynomials the above 
continued fraction expansions mean that the Genocchi and median Genocchi numbers are moments of orthogonal polynomials. More precisely, as shown in \cite{GZ21},
they are indeed moments of special or shifted continuous dual 
Hahn polynomials, see  also 
\cite{CKS16, CJW11, BS21,GZ21} for recent papers on combinatorial aspects 
 related to the  moments 
of Askey-Wilson  polynomials.

For reader's convenience, we recall two  standard \emph{contraction formulae} transforming an S-fraction to 
J-fractions, see \cite{DZ94}. 
\begin{lem}[Contraction formula]\label{contra-formula}
For any sequence $\{\alpha_n\}$ of elements in an arbitrary ring containing $\Z$, the following holds
\begin{align}\label{contraction1}
\cfrac{1}{
1-\cfrac{\alpha_{1}x}{1-\cfrac{\alpha_{2}x}{
1-\cfrac{\alpha_{3}x}{
{\ddots}}}}}
=\cfrac{1}{1-b_0x-\cfrac{\la_1x^{2}}{1-b_1x-\cfrac{\la_2x^{2}}{1-b_2 x-\cfrac{\la_3z^2}{\ddots}}}}
\end{align}
if 
$
b_0=\alpha_1, \quad 
b_n=\alpha_{2n}+\alpha_{2n+1},\quad
\lambda_n=\alpha_{2n-1}\alpha_{2n}\quad \textrm{for}\; n\geq 1;
$
and 
\begin{align}\label{contraction2}
\cfrac{1}{
1-\cfrac{\alpha_{1}x}{1-\cfrac{\alpha_{2}x}{
1-\cfrac{\alpha_{3}x}{
{\ddots}}}}}
&=1+\cfrac{\a_{1}x}{1-b_0x-\cfrac{\la_1 x^{2}}{1-b_1x-\cfrac{\la_2x^{2}}{1-b_3 x-\cfrac{\la_3x^2}{\ddots}}}}.
\end{align}
if 
$
b_{n-1}=\alpha_{2n-1}+\a_{2n},\quad
\lambda_n=\alpha_{2n}\alpha_{2n+1}\quad \textrm{for}\; n\geq 1.$
\end{lem}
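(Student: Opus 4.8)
The plan is to recognize \eqref{contraction1} and \eqref{contraction2} as the classical odd- and even-part contractions of the S-fraction, and to prove them by matching convergents inside the ring of formal power series in $x$ over the given coefficient ring. Since each partial numerator $-\alpha_n x$ has positive order in $x$, the convergents of every fraction that appears stabilize coefficientwise; hence each continued fraction defines a genuine formal power series, and it suffices to show that the relevant families of convergents coincide and then pass to the (common) limit.

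First I would record the fundamental recurrences. Writing the denominator of the left-hand side as $t_0=1-\alpha_1x/(1-\alpha_2x/(1-\cdots))$, let $A_N/B_N$ denote its $N$th convergent, so that the left-hand side equals $\lim_N B_N/A_N$. Here $A_N=A_{N-1}-\alpha_Nx\,A_{N-2}$ and $B_N=B_{N-1}-\alpha_Nx\,B_{N-2}$, with initial data $A_{-1}=A_0=1$, $B_{-1}=0$, $B_0=1$. A direct check gives $B_0/A_0=1$, $B_1/A_1=1/(1-\alpha_1x)$ and $B_2/A_2=(1-\alpha_2x)/(1-(\alpha_1+\alpha_2)x)$, which already exhibits the two target shapes.

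For \eqref{contraction1} I would track the odd-indexed convergents $B_{2n+1}/A_{2n+1}$. Eliminating the even-indexed terms from the two-term recurrences (substitute the recurrence for $A_{2n}$ into that for $A_{2n+1}$, and likewise for $B$) yields the three-term recurrence $A_{2n+1}=(1-(\alpha_{2n}+\alpha_{2n+1})x)A_{2n-1}-\alpha_{2n-1}\alpha_{2n}x^2A_{2n-3}$, together with the identical recurrence for $B$. These are exactly the convergent recurrences of the J-fraction on the right of \eqref{contraction1}, with $b_n=\alpha_{2n}+\alpha_{2n+1}$ and $\lambda_n=\alpha_{2n-1}\alpha_{2n}$; comparison of the base case $B_1/A_1=1/(1-\alpha_1x)$ fixes $b_0=\alpha_1$. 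An induction then identifies $A_{2n+1}$ and $B_{2n+1}$ with the denominator and numerator polynomials of that J-fraction, so the two limits agree. For \eqref{contraction2} the same elimination applied to the even-indexed convergents $B_{2n}/A_{2n}$ produces $b_{n-1}=\alpha_{2n-1}+\alpha_{2n}$ and $\lambda_n=\alpha_{2n}\alpha_{2n+1}$, while the base case $B_0/A_0=1$ is precisely what forces the leading summand $1$ and the prefactor $\alpha_1x$ standing in front of the J-fraction.

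The routine part is the elimination producing the three-term recurrences; the only place demanding care is the boundary bookkeeping, namely getting the first coefficient right ($b_0=\alpha_1$ in \eqref{contraction1}) and explaining the leading $1+\alpha_1x(\cdots)$ in \eqref{contraction2} from the initial data of the even convergents. I expect this index and initial-condition alignment, rather than the bulk recurrence, to be the main thing to pin down; once the two families of convergents are matched, equality of the corresponding formal power series follows from the coefficientwise convergence noted at the outset.
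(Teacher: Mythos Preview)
Your argument is correct and is the standard route: match the odd (resp.\ even) convergents of the S-fraction with the convergents of the target J-fraction by eliminating the intermediate terms from the two-term recurrence, then invoke coefficientwise convergence in the formal power series ring. The elimination you describe indeed yields the three-term recurrence with the claimed $b_n$ and $\lambda_n$, and your handling of the initial conditions is right.

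As for comparison: the paper does not prove this lemma at all. It is stated as a well-known contraction formula with a reference to \cite{DZ94}, so there is no ``paper's own proof'' to compare against. Your write-up supplies exactly the kind of verification the paper omits.
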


 From \eqref{v-mgenocchi} we  derive the  J-fraction
\begin{align}\label{J-frac-MG}
\sum_{n=0}^{\infty}H_{2n+1}x^{n}=\cfrac{1}{
1  -2\cdot 1^2 \;x-\cfrac{1^2\cdot 2^2\; x^2}{
1 - 2\cdot 2^2 \;x-\cfrac{2^2\cdot 3^2\;  x^2}{
1-2\cdot 3^2 \;x-\cfrac{3^2 \cdot 4^2\;  x^2}{
\cdots}}}}.
\end{align}
Let $h_{n}=H_{2n+1}/2^n$ ($n\in \N$) be the normalized median Genocchi numbers.
By  replacing  $x$ by $x/2$, 
 we obtain
\begin{align}\label{J-frac-NMG}
\sum_{n=0}^{\infty}h_{n}x^{n}=\cfrac{1}{
1  -1^2\; x-\cfrac{1^2\cdot  x^2}{
1 - 2^2 \;x-\cfrac{{3\choose 2}^2 \cdot x^2}{
1-3^2 \;x-\cfrac{{4\choose 2}^2\cdot x^2}{
\cdots}}}}.
\end{align}

Dumont \cite{Du74} initiated the combinatorial characterization of Genocchi numbers.
A  permutation $\sigma\in \S_{2n}$ is a \emph{Dumont permutation}  if 
$\sigma(2i-1)\geq 2i-1$ and $\sigma(2i)<2i$ for $i\in [n]$.
A  permutation $\sigma\in \S_{2n}$ is a \emph{Dumont derangement} 
if $\sigma(2i-1)>2i-1$ and $\sigma(2i)<2i$ for $i\in [n]$.
Dumont~\cite{Du74} and Dumont-Randrianarivony~\cite{DR94}
proved respectively that the Dumont permutations in $\S_{2n}$ is the Genocchi number $G_{2n}$ and the Dumont derangements in $\S_{2n}$ is the median Genocchi number $H_{2n+1}$.
Kitaev and Remmel \cite{KR07} conjectured  and Burstein, Josuat-Verg\`es, and Stromquist \cite{BJS10} proved
that the set of   permutations in $\S_{2n}$  with only even-even descents has cardinality equal to $G_{2n}$. Clearly the set of even-even descent permutations in $\S_{2n}$ is in bijection with the set of odd-odd descent permutations in $\S_{2n-1}$. 

Let $\Y_{2n+1}$ be the set of permutations in $\S_{2n+1}$ that contain only odd-odd descents, notice that $|\Y_{2n+1}|=G_{2n+2}$. 
Let $\Y_{2n+1}^*$ be the subset of $\Y_{2n+1}$ consisting of the permutations ending with an odd element. The first three sets are $\Y_{1}^*=\{1\}$,
$\Y_{3}^*=\{123, 231\}$ and
$$
\Y_{5}^*=\{12345, 12453, 23451, 24531, 24513, 23145, 31245, 45123\}.
$$
Eu et al. \cite{EFLL21} proved that the cardinality of $\Y_{2n+1}^*$ is equal to the median Genocchi 
number $H_{2n+1}$. They also considered  a $q$-analogue of the descent polynomials of $\Y_{2n+1}^*$ and proved a similar $\gamma$-expansion for the $q$-descent polynomials. 
We  show Section~7 that it is possible to  refine their results and prove similar results as for the descent polynomials of even-odd descent permutations. 

\section{Main results}

For any integer $n\geq 1$ we define the 
generalized $(p,q)$-analogue of $n$  by
\begin{subequations}
\begin{align}
[x,n,y]_{p,q}&=xp^{n-1}+\sum_{i=1}^{n-2}p^{n-1-i}q^i+yq^{n-1},\\
[x,n]_{p,q}&=xp^{n-1}+\sum_{i=1}^{n-1}p^{n-1-i}q^i,
\end{align}
\end{subequations}
with  $[x,1,y]_{p,q}=xy$ and   $[x,1]_{p,q}=x$. In particular,
$$
[n]_{p,q}=[1,n]_{p,q}=\frac{p^n-q^n}{p-q},
$$
and the $(p,q)$-analogue of binomial coefficient $\binom{n}{k}$ is defined by
$$
\binom{n}{k}_{p,q}=\frac{[n]_{p,q}\ldots [n-k+1]_{p,q}}{[1]_{p,q}\ldots [k]_{p,q}}\qquad (1\leq k\leq n).
$$

\begin{thm}\label{master-cf} We have 
\begin{multline}\label{cf:X}
1+\sum_{n=1}^{\infty}X_n(a,\bar a, b, \bar b, p,q,y, t)x^{n}=
\frac{1}{1-ab(\bar a  \bar b+ty)\,x-\cfrac{abt( a p+\bar b q)(\bar a p+bq)x^2}{\ddots}}
\end{multline}
with coefficients for $n\geq 1$
\begin{align*}\begin{cases}
b_{n-1}&=[\bar a, n, b]_{p,q}[a,n, \bar b]_{p,q}+
ty \,[a,n]_{p,q}[b,n]_{q,p},\\
\lambda_{n}&=t[a,n]_{p,q}\,[a,n+1, \bar b]_{p,q}\,[b, n]_{q,p}\,[\bar a,n+1,b]_{p,q}.
\end{cases}
\end{align*}
\end{thm}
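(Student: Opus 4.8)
The plan is to prove \eqref{cf:X} through the combinatorial theory of continued fractions, by exhibiting $1+\sum_{n\ge1}X_nx^n$ as the generating function of a family of weighted Motzkin paths and then invoking Flajolet's theorem~\cite{Fl80}. Recall that if a level step at height $h$ is given weight $b_h$, and an up-step from height $h-1$ to $h$ is paired with the down-step that returns to height $h-1$ so that their weights multiply to $\lambda_h$, then the generating function $\sum_n\bigl(\sum_\gamma\prod_{\text{steps}}w\bigr)x^n$, summed over Motzkin paths $\gamma$ of length $n$, equals the J-fraction on the right-hand side of \eqref{cf:X}. Hence the theorem reduces to producing a weight-preserving bijection $\Phi$ from $\mathcal{X}_{2n}$ onto labelled Motzkin paths of length $n$ whose step-weights sum to the prescribed $b_{n-1}$ and $\lambda_n$.

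I would construct $\Phi$ as a Laguerre-history-type encoding adapted to the even--odd setting: insert the values $1,2,\dots,2n$ successively, recording at each step how the new value splits the currently active slots, and then group each consecutive odd/even pair of insertions into a single Motzkin step. This grouping is what produces a length-$n$ path and explains the product structure of the weights, the odd insertion contributing one $(p,q)$-integer factor and the even insertion the other; the parity asymmetry between the two halves of a pair is exactly what forces the swap $[\,\cdot\,]_{p,q}\leftrightarrow[\,\cdot\,]_{q,p}$ between the ``maxima'' side and the ``minima'' side. A level step then comes in two flavours: one in which the even insertion creates no descent, and one in which it closes immediately below its partner to form a domino $\cdots(2j)(2j-1)\cdots$; only the latter carries the descent weight $t$ together with the domino weight $y$, which is precisely why $y$ occurs in the $ty$-summand of $b_{n-1}$ and is absent from $\lambda_n$.

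It then remains to match the statistics one by one and read off the weights. Descents are marked by $t$ and dominos by $y$ as above; the parity of a value entering or leaving an active slot routes each record to the correct letter, so that an even (resp. odd) left-to-right maximum contributes $a$ (resp. $\bar a$) and an even (resp. odd) right-to-left minimum contributes $\bar b$ (resp. $b$). Summing over the admissible labels at a given height converts each block of crossings and nestings into a generalized $(p,q)$-integer, the right-embracing statistic $\231$ supplying the exponent of $p$ and the left-embracing statistic $\312$ the exponent of $q$, with the endpoint parameters decorating the extreme (pure-crossing and pure-nesting) terms. Carrying this out yields level-step weight $[\bar a,n,b]_{p,q}[a,n,\bar b]_{p,q}+ty\,[a,n]_{p,q}[b,n]_{q,p}$ and up/down product $\lambda_n=t[a,n]_{p,q}[a,n+1,\bar b]_{p,q}[b,n]_{q,p}[\bar a,n+1,b]_{p,q}$, whence \eqref{cf:X} by Flajolet's theorem. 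As a check, setting $y=1$ collapses the two level-step flavours and the very same coefficients are produced by the contraction formula \eqref{contraction2} of Lemma~\ref{contra-formula} applied to the S-fraction with $\alpha_{2n-1}=[\bar a,n,b]_{p,q}[a,n,\bar b]_{p,q}$ and $\alpha_{2n}=t[a,n]_{p,q}[b,n]_{q,p}$, which in turn specializes to \eqref{J-frac-MG} in the Genocchi case.

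The main obstacle is the bijective bookkeeping in the middle step. The delicate points are (i) defining the label of each insertion so that the generating polynomial of labels at height $h$ is exactly the stated $(p,q)$-integer, with crossings and nestings counted by $\231$ and $\312$ under the correct $p/q$ assignment; (ii) verifying that the parity of each entering or leaving value lands the record in the intended letter among $a,\bar a,b,\bar b$, including the $[\,\cdot\,]_{p,q}$ versus $[\,\cdot\,]_{q,p}$ interchange; and (iii) isolating the domino contribution onto the level steps so that $y$ never leaks into $\lambda_n$. Because these three requirements interact within a single contracted step, I would first test the encoding against the tabulated values of $X_1$ and $X_2$ before attempting the general height-$h$ computation.
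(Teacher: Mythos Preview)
Your proposal is correct and follows essentially the same approach as the paper: both construct a Laguerre-history-type bijection from $\X_{2n}$ to labelled $2$-Motzkin paths of length $n$ by grouping each pair $(2j-1,2j)$ into a single step, distinguishing two level-step types (the domino case carrying $ty$), using the right-embracing numbers $(r_\sigma(2j-1),r_\sigma(2j))$ as the labels that $p,q$-enumerate to the required generalized $(p,q)$-integers, and then invoking Flajolet's theorem. The paper makes precisely the choices you outline, including the step-type rule via membership of $2j-1,2j$ in the descent-top/bottom set $S_\sigma$ and the insertion-into-slots description of the inverse map.
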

We first record some special cases of Theorem~\ref{master-cf}.
When $b=\bar b=1$ we have the more compact formula.
\begin{cor}\label{cor-master-cf}
\begin{multline}\label{cf:X}
1+\sum_{n=1}^{\infty}X_{n}(a,\bar a,1,1, p,q, y, t)x^{n}=\\
\frac{1}{1-(\bar a+ty)ax-\cfrac{at(\bar a p+q)(ap+q)x^2}
{1-([\bar a, 2]_{p,q}+ty[2]_{p,q})[a,2]_{p,q}x-\cfrac{t[a,2]_{p,q}[a,3]_{p,q}[2]_{p,q}[\bar a,3]_{p,q}x^2}{\cdots}}}
\end{multline}
with coefficients for $n\geq 1$
\begin{align*}\begin{cases}
b_{n-1}&=([\bar a, n]_{p,q}+
ty \,[n]_{p,q})[a,n]_{p,q},\\
\lambda_{n}&=t[a,n]_{p,q}\,[a,n+1]_{p,q}\,[n]_{p,q}\,[\bar a,n+1]_{p,q}.
\end{cases}
\end{align*}
\end{cor}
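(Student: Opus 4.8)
The plan is to derive Corollary~\ref{cor-master-cf} directly from Theorem~\ref{master-cf} by specializing the two weight variables $b$ and $\bar b$ to $1$; no new combinatorial input is required. First I would record the two elementary identities on the generalized $(p,q)$-integers that drive the simplification. For $n \geq 2$, setting $y = 1$ in the definition of $[x,n,y]_{p,q}$ merges the final summand $y q^{n-1} = q^{n-1}$ (the $i = n-1$ term) into the running sum, so that
\begin{align*}
[x,n,1]_{p,q} = x p^{n-1} + \sum_{i=1}^{n-1} p^{n-1-i} q^i = [x,n]_{p,q},
\end{align*}
while the boundary case $[x,1,1]_{p,q} = x = [x,1]_{p,q}$ is checked separately from the convention $[x,1,y]_{p,q} = xy$. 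Moreover, since $[n]_{p,q} = (p^n - q^n)/(p-q)$ is manifestly symmetric in $p$ and $q$, setting $b = 1$ in $[b,n]_{q,p}$ gives $[n]_{q,p} = [n]_{p,q}$.

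With these in hand the substitution is mechanical. In the coefficient $b_{n-1}$ of Theorem~\ref{master-cf}, setting $b = \bar b = 1$ turns $[\bar a, n, b]_{p,q}$ into $[\bar a, n]_{p,q}$, turns $[a, n, \bar b]_{p,q}$ into $[a, n]_{p,q}$, and turns $[b, n]_{q,p}$ into $[n]_{p,q}$, so that
\begin{align*}
b_{n-1} = [\bar a, n]_{p,q}[a,n]_{p,q} + ty\,[a,n]_{p,q}[n]_{p,q} = \big([\bar a, n]_{p,q} + ty\,[n]_{p,q}\big)[a,n]_{p,q},
\end{align*}
where the common factor $[a,n]_{p,q}$ has been pulled out. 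The same three replacements in $\lambda_n$ yield $\lambda_n = t[a,n]_{p,q}[a,n+1]_{p,q}[n]_{p,q}[\bar a, n+1]_{p,q}$, which is exactly the asserted form.

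Finally I would confirm that the displayed low-order entries of the J-fraction agree with the general formulas, both as a sanity check and to keep the statement self-contained. For $n = 1$, using $[x,1]_{p,q} = x$ and $[1]_{p,q} = 1$, one gets $b_0 = (\bar a + ty)a$ and $\lambda_1 = at(ap+q)(\bar a p + q)$, matching the first layer of the continued fraction; for $n = 2$, using $[x,2]_{p,q} = xp + q$, one reproduces $b_1 = ([\bar a,2]_{p,q} + ty[2]_{p,q})[a,2]_{p,q}$ and $\lambda_2 = t[a,2]_{p,q}[a,3]_{p,q}[2]_{p,q}[\bar a,3]_{p,q}$. Since the whole argument is a substitution, there is no genuine obstacle; the only points demanding care are the bookkeeping at the boundary index $n = 1$, where the convention $[x,1,y]_{p,q} = xy$ must be invoked in place of the generic summation formula, and the observation that it is precisely the $p \leftrightarrow q$ symmetry of $[n]_{p,q}$ that collapses $[n]_{q,p}$ to $[n]_{p,q}$.
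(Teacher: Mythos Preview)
Your proposal is correct and follows exactly the paper's own approach: the corollary is obtained by specializing $b=\bar b=1$ in Theorem~\ref{master-cf}, and the simplifications $[x,n,1]_{p,q}=[x,n]_{p,q}$ and $[1,n]_{q,p}=[n]_{p,q}$ that you spell out are precisely what is needed. The paper states this only implicitly (``When $b=\bar b=1$ we have the more compact formula''), so your added verification of the boundary case $n=1$ and the low-order coefficients is a welcome elaboration rather than a departure.
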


Let  $y=1$ in Theorem~\ref{master-cf}, by the contraction formula we can rewrite the above J-fraction as an S-fraction.
\begin{cor} We have the S-fraction expansion 
\begin{equation}\label{Scf:z}
1+a\bar a\sum_{n=0}^{\infty}X_{n}(a,\bar a,1, 1,  p,q, 1, t)x^{n+1}=
\cfrac{1}{1-\cfrac{a\bar a\cdot x}{1-\cfrac{at\cdot x}{1-\cfrac{[\bar a,2]_{p,q}[a,2]_{p,q}\cdot x}
{1-\cfrac{[a,2]_{p,q}[2]_{p,q}\,t\cdot x}{\cdots}}}}}
\end{equation}
with $X_0=1$ and coefficients 
\begin{align*}
\begin{cases}
\alpha_{2n-1}&=[\bar a,n]_{p,q}[a,n]_{p,q},\qquad n\geq 1,\\
\alpha_{2n}&=[a,n]_{p,q}[n]_{p,q} \,t.
\end{cases}
\end{align*}
\end{cor}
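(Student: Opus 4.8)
The plan is to obtain \eqref{Scf:z} from the J-fraction of Corollary~\ref{cor-master-cf} (at $y=1$) by reading the contraction formula \eqref{contraction2} of Lemma~\ref{contra-formula} in the reverse direction. Set
\[
F(x)=\sum_{n\geq 0}X_{n}(a,\bar a,1,1,p,q,1,t)\,x^{n},\qquad X_0=1,
\]
so that, by Corollary~\ref{cor-master-cf} specialized to $y=1$, the series $F(x)$ equals the J-fraction with coefficients
\[
b_{n-1}=\bigl([\bar a,n]_{p,q}+t[n]_{p,q}\bigr)[a,n]_{p,q},\qquad
\lambda_{n}=t[a,n]_{p,q}[a,n+1]_{p,q}[n]_{p,q}[\bar a,n+1]_{p,q}.
\]

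First I would introduce the proposed S-fraction coefficients $\alpha_{2n-1}=[\bar a,n]_{p,q}[a,n]_{p,q}$ and $\alpha_{2n}=t[a,n]_{p,q}[n]_{p,q}$ for $n\geq 1$, and verify that they satisfy the two contraction relations $b_{n-1}=\alpha_{2n-1}+\alpha_{2n}$ and $\lambda_{n}=\alpha_{2n}\alpha_{2n+1}$ demanded by \eqref{contraction2}. Both are immediate factorizations: pulling $[a,n]_{p,q}$ out of $\alpha_{2n-1}+\alpha_{2n}=[a,n]_{p,q}([\bar a,n]_{p,q}+t[n]_{p,q})$ recovers $b_{n-1}$, while multiplying $\alpha_{2n}=t[a,n]_{p,q}[n]_{p,q}$ with $\alpha_{2n+1}=[\bar a,n+1]_{p,q}[a,n+1]_{p,q}$ recovers $\lambda_{n}$.

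Once these relations hold, the contraction identity \eqref{contraction2} applies verbatim and turns the J-fraction into the S-fraction of \eqref{Scf:z}: its right-hand side is $1+\alpha_{1}x\,F(x)$, and since $\alpha_{1}=[\bar a,1]_{p,q}[a,1]_{p,q}=a\bar a$ we get
\[
1+\alpha_{1}x\,F(x)=1+a\bar a\sum_{n\geq 0}X_{n}(a,\bar a,1,1,p,q,1,t)\,x^{n+1},
\]
which is exactly the left-hand side of \eqref{Scf:z}. I do not expect a genuine obstacle here, as everything is a formal power series identity driven by Lemma~\ref{contra-formula}; the only points requiring care are the parity bookkeeping in the contraction relations---making sure the odd-indexed $\alpha$'s carry the $[\bar a,\cdot]_{p,q}[a,\cdot]_{p,q}$ blocks and the even-indexed ones the $t[a,\cdot]_{p,q}[\cdot]_{p,q}$ blocks---and tracking the extra factor $\alpha_{1}x=a\bar a\,x$ with the index shift $x^{n}\mapsto x^{n+1}$, which together produce the prefactor $a\bar a$ and the leading summand $1$ on the left of \eqref{Scf:z}.
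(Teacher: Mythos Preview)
Your proposal is correct and is exactly the argument the paper has in mind: the paper simply says ``Let $y=1$ in Theorem~\ref{master-cf}, by the contraction formula we can rewrite the above J-fraction as an S-fraction,'' and your write-up spells out precisely this application of \eqref{contraction2} with the verification that $b_{n-1}=\alpha_{2n-1}+\alpha_{2n}$, $\lambda_n=\alpha_{2n}\alpha_{2n+1}$, and $\alpha_1=a\bar a$.
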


By Foata's fundamental transformation (see \eqref{cyc-lma}) we have 
$$
P_n(t, z):=\sum_{\sigma\in \X_{2n}}z^{\des \sigma}z^{\lma \sigma}=\sum_{\sigma\in \E_{2n}}z^{\drop \sigma}z^{\cyc \sigma}. 
$$
Thus, 
letting $a=\bar a=z$, $p=q=y=1$ in Corollary~\ref{cor-master-cf} we derive the continued fraction for the ordinary  generating function of 
$P_n(t,z)$.
\begin{cor}\label{cf-e-cyc}
\begin{align}
&1+\sum_{n=1}^\infty P_n(t, z)x^n\nonumber\\
&=\cfrac{1}{1-z(z+t)\,x-
\cfrac{z(z+1)^2t\,x^2}{\cfrac{\ddots}{1-(z+n)(z+n+(n+1)t)\,x-
\cfrac{(n+1)(z+n)(z+n+1)^2t\,x^2}{\cdots}}}}.
\end{align}
\end{cor}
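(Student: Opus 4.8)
The plan is to obtain Corollary~\ref{cf-e-cyc} as a direct specialization of Corollary~\ref{cor-master-cf}, so the work splits into a short combinatorial identification and a routine collapse of the generalized $(p,q)$-integers.

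First I would justify the generating-function identity defining $P_n(t,z)$. By Foata's fundamental transformation and \eqref{cyc-lma}, the restriction $\varphi\colon\E_{2n}\to\X_{2n}$ carries the pair $(\drop,\cyc)$ to $(\des,\lma)$, whence $\sum_{\sigma\in\E_{2n}}t^{\drop\sigma}z^{\cyc\sigma}=\sum_{\sigma\in\X_{2n}}t^{\des\sigma}z^{\lma\sigma}=P_n(t,z)$. Since every left-to-right maximum value is either even or odd, we have $\lma\,\sigma=\lema\,\sigma+\loma\,\sigma$ for all $\sigma\in\X_{2n}$, so putting $a=\bar a=z$ in the octuple polynomial $X_n$ collapses the weight $a^{\lema\,\sigma}\bar a^{\loma\,\sigma}$ into $z^{\lma\,\sigma}$. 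Setting in addition $b=\bar b=1$, $p=q=1$ and $y=1$ kills the remaining statistics $\romi,\remi,\231,\312,\TB$, giving $X_n(z,z,1,1,1,1,1,t)=\sum_{\sigma\in\X_{2n}}z^{\lma\,\sigma}t^{\des\sigma}=P_n(t,z)$. Hence the left-hand side of the asserted continued fraction is exactly the image of the left-hand side of the expansion in Corollary~\ref{cor-master-cf} under $a=\bar a=z$, $p=q=y=1$.

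Next I would specialize the coefficients $b_{n-1}$ and $\lambda_n$ of Corollary~\ref{cor-master-cf}. The only computation needed is the evaluation of the generalized $(p,q)$-integers at $p=q=1$: from the definitions one reads off $[z,n]_{1,1}=z+n-1$ and $[n]_{1,1}=n$. Substituting these (with $y=1$) into $b_{n-1}=([\bar a,n]_{p,q}+ty\,[n]_{p,q})[a,n]_{p,q}$ gives $b_{n-1}=(z+n-1)\bigl(z+n-1+nt\bigr)$, and into $\lambda_n=t[a,n]_{p,q}[a,n+1]_{p,q}[n]_{p,q}[\bar a,n+1]_{p,q}$ gives $\lambda_n=n\,t\,(z+n-1)(z+n)^2$. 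These are precisely the coefficients displayed in the continued fraction, the generic displayed terms being $b_n=(z+n)\bigl(z+n+(n+1)t\bigr)$ and $\lambda_{n+1}=(n+1)\,t\,(z+n)(z+n+1)^2$; the initial values $b_0=z(z+t)$ and $\lambda_1=z(z+1)^2t$ confirm the leading segment.

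There is essentially no hard step here: the argument is bookkeeping once Corollary~\ref{cor-master-cf} is in hand. The only point requiring a moment's care is the combinatorial reduction of the second paragraph, namely recognizing that the merge $a=\bar a=z$ is what converts the refined even/odd maxima weights into a single $z^{\lma}$ (equivalently $z^{\cyc}$ on the E-permutation side), and that no descent information is lost because the variable $t$ is left untouched by the specialization.
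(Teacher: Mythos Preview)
Your proposal is correct and is precisely the approach the paper takes: it obtains Corollary~\ref{cf-e-cyc} by the specialization $a=\bar a=z$, $p=q=y=1$ in Corollary~\ref{cor-master-cf}, together with the Foata-transformation identification \eqref{cyc-lma} of $(\drop,\cyc)$ with $(\des,\lma)$. Your verification of the coefficient collapse $[z,n]_{1,1}=z+n-1$, $[n]_{1,1}=n$, yielding $b_{n-1}=(z+n-1)(z+n-1+nt)$ and $\lambda_n=n\,t\,(z+n-1)(z+n)^2$, supplies the explicit check the paper leaves to the reader.
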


Taking  $y=0$ in  Theorem~\ref{master-cf}  we obtain the corresponding formula for $\mathcal{E}$-permutations.
\begin{cor}\label{cor:gammacf} We have
\begin{multline}
1+\sum_{n=1}^\infty\sum_{\sigma\in \overbar{\X}_{2n}}a^{\lema\,\sigma}
{\bar a}^{\loma\,\sigma}
p^{\231\sigma }q^{\312\sigma }t^{\des \sigma}x^{n}
=\\
\cfrac{1}{
1  -a{\bar a}\, x-\cfrac{at[2]_{p,q}[a,2]_{p,q}\, x^2}{
1 - (ap+q)({\bar a}p+q) x-\cfrac{t[2]_{p,q}[a,2]_{p,q}[3]_{p,q}[\bar a,3]_{p,q}\, x^2}{
\cdots}}}.
\end{multline}
with coefficients for $n\geq 1$
\begin{align*}\begin{cases}
b_{n-1}&=[a, n]_{p,q}[\bar a, n]_{p,q},\\
\lambda_{n}&=t[a,n]_{p,q}\,[a,n+1]_{p,q}\,[n]_{p,q}\,[\bar a,n+1]_{p,q}.
\end{cases}
\end{align*}
\end{cor}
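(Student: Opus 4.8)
The plan is to obtain Corollary~\ref{cor:gammacf} purely as a specialization of the already-established J-fraction, setting $b=\bar b=1$ and $y=0$ in Theorem~\ref{master-cf} (equivalently, putting $y=0$ in Corollary~\ref{cor-master-cf}). No fresh continued-fraction argument is required; the entire content lies in matching the two sides under this substitution.

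First I would pin down the combinatorial effect of $y=0$ on the left-hand side. From the definition \eqref{x-even-odd}, setting $b=\bar b=1$ gives $X_n(a,\bar a,1,1,p,q,y,t)=\sum_{\sigma\in\X_{2n}}a^{\lema\,\sigma}\bar a^{\loma\,\sigma}p^{\231\sigma}q^{\312\sigma}y^{\TB\,\sigma}t^{\des\,\sigma}$. A monomial with $\TB\,\sigma\ge 1$ carries a positive power of $y$ and hence vanishes at $y=0$, whereas a monomial with $\TB\,\sigma=0$ is free of $y$ and survives. Since a domino is a doubleton $\{2j-1,2j\}$ whose entries are simultaneously a descent bottom and a descent top, the condition $\TB\,\sigma=0$ is exactly the defining property of an $\mathcal{E}$-permutation; therefore at $y=0$ the sum collapses from $\X_{2n}$ onto $\overbar{\X}_{2n}$ and recovers the left-hand side of the corollary verbatim.

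Next I would specialize the coefficients. Working directly from Theorem~\ref{master-cf}, I would invoke the elementary identity $[x,n,1]_{p,q}=[x,n]_{p,q}$ (the extra summand $p^0q^{n-1}$ in $[x,n]_{p,q}$ fills the gap left by collapsing the third slot) together with the symmetry $[b,n]_{q,p}\big|_{b=1}=[n]_{q,p}=[n]_{p,q}$; these already reduce the octuple coefficients to those of Corollary~\ref{cor-master-cf}. Putting $y=0$ then deletes the summand $ty\,[n]_{p,q}[a,n]_{p,q}$ from $b_{n-1}$, leaving $b_{n-1}=[a,n]_{p,q}[\bar a,n]_{p,q}$, while $\lambda_n=t[a,n]_{p,q}[a,n+1]_{p,q}[n]_{p,q}[\bar a,n+1]_{p,q}$ is visibly independent of $y$ and thus unchanged; these are precisely the claimed coefficients. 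There is no genuine obstacle in this corollary—it is a direct specialization of a proved expansion—so the only care needed is twofold: to confirm that the vanishing forced by $y=0$ selects exactly $\overbar{\X}_{2n}$, and to cross-check the printed partial numerators of the displayed fraction against these general formulas (for instance $\lambda_1=at[a,2]_{p,q}[\bar a,2]_{p,q}=at(ap+q)(\bar a p+q)$) before committing to the explicit form.
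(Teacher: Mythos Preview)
Your proposal is correct and matches the paper's own derivation exactly: the paper also obtains Corollary~\ref{cor:gammacf} simply by ``taking $y=0$ in Theorem~\ref{master-cf}'' (with $b=\bar b=1$), using that $\TB\,\sigma=0$ characterizes $\overbar{\X}_{2n}$. Your cross-check of $\lambda_1$ is a nice catch---the displayed first numerator in the corollary should indeed read $at[a,2]_{p,q}[\bar a,2]_{p,q}$ rather than $at[2]_{p,q}[a,2]_{p,q}$, consistent with the general coefficient formula.
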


We shall prove Theorem~\ref{master-cf} in Section~3 by constructin 
a bijection from the even-odd descent permutations to  
lattice path diagrams.  Combining Theorem~\ref{master-cf}, the surjective pistol theory developped by Dumont-Randrianarivony and Lazar-Wachs and 
 Randrianarovony-Zeng's  continued fraction expansion for the ordinary generating function of generalized Dumont-Foata polynomials
 we prove  the  conjecture of 
Lazar-Wachs~\cite[Conjecture 6.4]{LW19} in section 4.
 \begin{thm}\label{LW-conj} Conjecture \ref{conj:LW} is true, i.e., for 
 all $n\geq 1$, 
\begin{align}\label{D-E-cycle}
\sum_{\sigma\in \E_{2n}} z^{\cyc \sigma}
=\sum_{\sigma \in \D_{2n}}z^{\cyc(\sigma)}.
\end{align}
Consequently 
$|\E\mathcal{C}_{2n}|=|\D\mathcal{C}_{2n}|=G_{2n+2}$.
 \end{thm}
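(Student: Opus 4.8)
The plan is to show that the two ordinary generating functions
$\sum_{n\ge 0}\big(\sum_{\sigma\in\E_{2n}}z^{\cyc\sigma}\big)x^n$ and
$\sum_{n\ge 0}\big(\sum_{\sigma\in\D_{2n}}z^{\cyc\sigma}\big)x^n$ admit one and the same continued fraction expansion; since the continued fraction determines the power series coefficients, \eqref{D-E-cycle} follows at once, and the cardinality statement comes by reading off the coefficient of $z^1$.

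First I would handle the side of $\E_{2n}$ via the master theorem. Foata's fundamental transformation \eqref{cyc-lma} restricts to a bijection $\varphi\colon\E_{2n}\to\X_{2n}$ sending $\cyc$ to $\lma=\lema+\loma$, so that
\[
\sum_{\sigma\in\E_{2n}}z^{\cyc\sigma}=\sum_{\sigma\in\X_{2n}}z^{\lma\sigma}=X_n(z,z,1,1,1,1,1,1).
\]
Specializing Theorem~\ref{master-cf} at $a=\bar a=z$ and $b=\bar b=p=q=y=t=1$, and using the evaluations $[x,m]_{1,1}=x+m-1$, $[x,m,y]_{1,1}=x+y+m-2$ and $[m]_{1,1}=m$, the coefficients collapse to
\[
b_n=(z+n)(z+2n+1),\qquad \lambda_n=n\,(z+n-1)(z+n)^2 .
\]
Reading the contraction formula \eqref{contraction1} backwards (solving $\alpha_1=b_0$, $\alpha_{2k}=\lambda_k/\alpha_{2k-1}$, $\alpha_{2k+1}=b_k-\alpha_{2k}$ recursively) turns this J-fraction into a Stieltjes-type continued fraction with $\alpha_{2k-1}=(z+k-1)(z+k)$ and $\alpha_{2k}=k\,(z+k)$. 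As a sanity check, at $z=1$ these coefficients reproduce \eqref{J-frac-MG}, the median Genocchi numbers.

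Next I would produce the \emph{same} S-fraction for $\sum_{\sigma\in\D_{2n}}z^{\cyc\sigma}$ through the surjective pistol theory. Under the encoding of $\D_{2n}$ by surjective pistols of height $n$ arising from the Dumont--Randrianarivony correspondence (and exploited by Lazar and Wachs), the number of cycles of a D-permutation is transported to a natural pistol statistic; consequently $\sum_{\sigma\in\D_{2n}}z^{\cyc\sigma}$ is a one-variable specialization of the generalized Dumont--Foata polynomials. Randrianarivony and Zeng computed the continued fraction of the ordinary generating function of those polynomials, and I expect the specialization of their parameters to the single variable $z$ to yield exactly the S-fraction with $\alpha_{2k-1}=(z+k-1)(z+k)$ and $\alpha_{2k}=k\,(z+k)$ found above. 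Comparing the two expansions proves \eqref{D-E-cycle}; extracting the coefficient of $z^1$ and invoking Theorem~\ref{thm:LW} then gives $|\E\mathcal{C}_{2n}|=|\D\mathcal{C}_{2n}|$, the Genocchi number.

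The main obstacle is the last step: pinning down the precise pistol statistic matching $\cyc$ on D-permutations and checking that the parameter specialization of Randrianarivony and Zeng's formula produces exactly $\alpha_{2k-1}=(z+k-1)(z+k)$ and $\alpha_{2k}=k\,(z+k)$. Everything else is a direct specialization of results already in hand; the content lies in threading the cycle statistic correctly through the pistol model so that the two continued fractions become literally identical, after which uniqueness of the expansion does the rest.
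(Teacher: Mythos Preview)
Your strategy is exactly the paper's: specialize Theorem~\ref{master-cf} for the $\E$-side (this is Corollary~\ref{cf-e-cyc} at $t=1$, and your coefficients $b_n=(z+n)(z+2n+1)$, $\lambda_n=n(z+n-1)(z+n)^2$ are correct), handle the $\D$-side through surjective pistols and the Randrianarivony--Zeng continued fraction, then compare.

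For the obstacle you flag, the paper invokes the Lazar--Wachs bijection (Lemma~\ref{lem:LW}) from $\D_{2n}$ to those $f\in\mathcal P_{2n+2}$ with no even maximum. Under this bijection the cycle maxima of $\sigma\in\D_{2n}$ go to the fixed points and odd maxima of $f$ (each cycle maximum is either even, hence a fixed point of $f$, or an odd fixed point of $\sigma$, hence an odd maximum of $f$), so $\cyc$ is tracked by $\mi+\fd+\fnd$. This gives
\[
\sum_{\sigma\in\D_{2n}}z^{\cyc\sigma}=\Gamma_{n+1}(z,z,1,0,z,1),
\]
and substituting $(\alpha,\beta,\gamma,\bar\alpha,\bar\beta,\bar\gamma)=(z,z,1,0,z,1)$ into Lemma~\ref{RZ} yields precisely $b_n=(z+n)^2+(z+n)(n+1)=(z+n)(z+2n+1)$ and $\lambda_{n+1}=(n+1)(z+n)(z+n+1)^2$, matching your $\E$-side J-fraction. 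The paper compares the J-fractions directly; your detour through the S-fraction with $\alpha_{2k-1}=(z+k-1)(z+k)$, $\alpha_{2k}=k(z+k)$ is correct but unnecessary.
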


Our third  result  is a quasi-gamma decomposition of $X_n(a, 1,b, 1,p,q,y, t)$, which provides 
a  neat $(p,q)$-analogue of  
the $\gamma$-formula \eqref{eu-gamma}  by setting  $a=b=y=1$.
 
\begin{thm}\label{thm:gamma}
We have
\begin{multline}\label{cf:ab}
1+\sum_{n=1}^{\infty}X_n(a,1, b, 1, p,q,y, t)x^{n}=
\frac{1}{1-ab(\bar a  \bar b+ty)\,x-\cfrac{abt(\bar a p+\bar q)(\bar a p+bq)x^2}{\ddots}}
\end{multline}
with coefficients for $n\geq 1$
\begin{align*}\begin{cases}
b_{n-1}&=(1+
ty )[a,n]_{p,q}[b, n]_{q,p},\\
\lambda_{n}&=t[a,n]_{p,q}\,[a,n+1]_{p,q}\,[b, n]_{q,p}\,[b,n+1]_{q,p}.
\end{cases}
\end{align*}
Moreover the following $\gamma$-formula holds
\begin{subequations}
\begin{align}\label{formula:gamma}
X_n(a,1,b, 1, p,q,y, t)=\sum_{k=0}^{\lfloor n/2\rfloor}\gamma_{n,k}(a, b, p,q)\, t^k(1+yt)^{n-2k},
\end{align}
{\rm with  coefficients}
\begin{align}\label{coeff:gamma}
\gamma_{n,k}(a,b, p,q)=\sum_{\sigma\in\overline{\X}_{2n,k}}a^{\lema\,\sigma}b^{\romi\,\sigma}p^{\231\sigma}q^{\312\sigma}.
\end{align}
\end{subequations}
\end{thm}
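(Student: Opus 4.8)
The plan is to obtain both assertions from Theorem~\ref{master-cf} by specialization, and then to read the $\gamma$-expansion off the resulting J-fraction through the Flajolet--Viennot interpretation of its moments as weighted Motzkin paths.

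First I would derive the continued fraction \eqref{cf:ab}: it is simply the case $\bar a=\bar b=1$ of Theorem~\ref{master-cf}. Everything reduces to two elementary bracket identities, which I would record once and then use silently. From the definitions one has $[a,n,1]_{p,q}=[a,n]_{p,q}$, because the terminal summand $q^{n-1}$ supplies exactly the $i=n-1$ term missing from $[a,n]_{p,q}$; and $[1,n,b]_{p,q}=[b,n]_{q,p}$, by the reindexing $i\mapsto n-1-i$ together with the interchange of $p$ and $q$. Substituting $\bar a=\bar b=1$ into the coefficients of Theorem~\ref{master-cf} and abbreviating $w_n:=[a,n]_{p,q}[b,n]_{q,p}$, these identities collapse $b_{n-1}$ and $\lambda_n$ to the stated forms $b_{n-1}=(1+ty)\,w_n$ and $\lambda_n=t\,w_n w_{n+1}$.

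Next I would isolate the two J-fractions whose comparison yields the $\gamma$-formula. By the display just proved, $1+\sum_{n\ge1}X_n\,x^n$ is the J-fraction with coefficients $\big(b_{n-1},\lambda_n\big)=\big((1+ty)w_n,\;t\,w_nw_{n+1}\big)$. On the other hand, setting $y=0$ (keeping $\bar a=\bar b=1$) in Theorem~\ref{master-cf} annihilates, through the factor $y^{\TB\sigma}$, every permutation carrying a domino, and therefore retains exactly the $\mathcal{E}$-permutations; the resulting series is $1+\sum_{n\ge1}\big(\sum_{\sigma\in\overbar{\X}_{2n}}a^{\lema\,\sigma}b^{\romi\,\sigma}p^{\231\sigma}q^{\312\sigma}t^{\des\sigma}\big)x^n$, whose J-fraction coefficients are the reduced pair $\big(\tilde b_{n-1},\tilde\lambda_n\big)=\big(w_n,\;t\,w_nw_{n+1}\big)$. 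By the definition of $\gamma_{n,k}$ in \eqref{coeff:gamma}, the $n$-th coefficient of this reduced series is $\sum_{k}\gamma_{n,k}(a,b,p,q)\,t^k$.

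The heart of the proof is a Motzkin-path bookkeeping comparing the two moment sequences. By Flajolet's theorem \cite{Fl80}, $X_n$ equals the sum over Motzkin paths of length $n$ of the product of the standard step weights: a level step at height $h$ contributes $(1+ty)w_{h+1}$, a down step from height $h$ contributes $t\,w_hw_{h+1}$, and an up step contributes $1$; the reduced moment $\sum_k\gamma_{n,k}t^k$ has the same description with the level weight replaced by $w_{h+1}$. The two weight systems differ only by a global factor $(1+ty)$ attached to each level step, and a path with $d$ down steps has $d$ up steps and hence $n-2d$ level steps, so this discrepancy amounts to exactly $(1+ty)^{n-2d}$. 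Grouping Motzkin paths according to their number $d$ of down steps then gives
\begin{align*}
X_n&=\sum_{d\ge0}(1+ty)^{n-2d}\Big(\sum_{\substack{\text{Motzkin paths of length }n\\ \text{with }d\text{ down steps}}}\;\prod_{\text{level at }h}w_{h+1}\prod_{\text{down from }h}t\,w_hw_{h+1}\Big)\\
&=\sum_{d\ge0}\gamma_{n,d}(a,b,p,q)\,t^d(1+ty)^{n-2d},
\end{align*}
where the last equality uses that in the reduced model the power of $t$ equals the number of down steps, so the bracketed sum is precisely $t^d\gamma_{n,d}$. This is \eqref{formula:gamma}. I expect the only delicate point to be this double role of the down-step count: it fixes at once the number of level steps, hence the exponent of $1+ty$, and --- because $\lambda_h$ is the sole $t$-bearing weight --- the exponent of $t$; reconciling these two bookkeepings is exactly what turns the uniform rescaling of level weights into a genuine $\gamma$-expansion.
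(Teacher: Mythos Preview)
Your proof is correct and coincides with the paper's primary argument: specialize Theorem~\ref{master-cf} at $\bar a=\bar b=1$ to get \eqref{cf:ab}, then compare the resulting J-fraction with its $y=0$ reduction to read off the $\gamma$-expansion. The paper compresses that comparison into a single sentence; your Motzkin-path bookkeeping---each of the $n-2d$ level steps acquires the factor $1+ty$ while the $d$ down steps carry all of the $t$'s---is exactly the mechanism that justifies it (equivalently, the substitution $(x,t)\mapsto\big(x(1+ty),\,t/(1+ty)^2\big)$ in the reduced J-fraction). The paper also supplies, in Section~5, an independent bijective proof of \eqref{formula:gamma} via an Inter-hopping $\Z_2^n$-action on $\X_{2n}$ that preserves $\231$, $\312$, $\lema$, $\romi$ and whose orbits are indexed by $\mathcal{E}$-permutations with the expected multiplicity $(1+yt)^{n-2\des}$; that route gives a combinatorial witness complementary to your generating-function derivation.
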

Clearly Theorem~\ref{master-cf} reduces to \eqref{cf:ab} by setting $\bar a=\bar b=1$. By comparing  Corollary \ref{cor:gammacf}
with formula \eqref{coeff:gamma} we derive \eqref{formula:gamma}.
We shall give another proof of \eqref{formula:gamma} using \emph{Inter-hopping} action on $\X_{2n}$ in Section~5.

A \emph{weak signature} of order $k$ in $[2n]$ is a subset $S\subseteq [2n]$ consisting of 
$k$ odd and $k$ even  numbers   such that 
for each $i\in[2n]$, 
the number of odd  numbers is greater than or equal to the number of even numbers in 
$S\cap \{1, \ldots, i\}$. Note that our weak signature
is different with  the \emph{signature} in \cite{BJS10, EFLL21}. Denote by $S_o$ (resp. $S_e$) the set of odd (resp. even ) elements of $S$.
Let $S_o(i)$ and $S_e(i)$ be the numbers of odd and even elements in $S$ less than $i$.
The associated signature function $s$ has domain $[2n]$ and is defined by
\begin{align}\label{def-s}
s(x)=S_o(x)-S_e(x)+(1 \; \textrm{if $x\in S_o$ or $x\notin S$}).
\end{align}
We  use  $l_{\sigma}(i)$ (resp. $r_{\sigma}(\sigma_i)$) to denote the number of left-embracings (resp. right-embracings) of $\sigma_i$ in $\sigma$, thus
 \begin{subequations}
 \begin{align}
 \312\sigma&=\sum_{i=1}^n l_{\sigma}(\sigma_i);\label{embracea}\\
 \231\sigma&=\sum_{i=1}^n r_{\sigma}(\sigma_i).\label{embraceb}
 \end{align}
 \end{subequations}

\begin{lem}\label{lem:signature}
For $\sigma\in \X_{2n}$, if $S$ be the set of descent tops and bottoms of $\sigma$, then $S$ is a week signature of $[2n]$ and the associated signature function $s$ is given by
\begin{align}\label{lem-s}
s(i)=l_{\sigma}(i)+r_{\sigma}(i)+1\qquad \forall i\in [2n].
\end{align}
\end{lem}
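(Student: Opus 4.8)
The plan is to prove the displayed identity \eqref{lem-s} first, and then to deduce that $S$ is a weak signature as a consequence of it. The starting point is to read $l_\sigma(i)$ and $r_\sigma(i)$ uniformly after fixing the position $p$ of the value $i$, so that $\sigma_p=i$. By the definitions preceding \eqref{embracea}--\eqref{embraceb}, a left-embracing of $i$ is a descent $(\sigma_{m-1},\sigma_m)$ lying entirely to the left of $p$ (i.e. $m<p$) with $\sigma_{m-1}>i>\sigma_m$, and a right-embracing of $i$ is a descent $(\sigma_m,\sigma_{m+1})$ lying entirely to the right of $p$ (i.e. $m>p$) with $\sigma_m>i>\sigma_{m+1}$. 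Since a descent occupies two adjacent positions, every descent of $\sigma$ is either entirely to the left of $p$, entirely to the right of $p$, or adjacent to $p$; in the adjacent case $i$ is one of its two endpoints and hence cannot satisfy $\text{top}>i>\text{bottom}$. Therefore $l_\sigma(i)+r_\sigma(i)$ counts exactly the descents $(t,b)$ of $\sigma$ that \emph{straddle} $i$, meaning $b<i<t$, irrespective of the side of $p$ on which they lie.

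Next I would compute $S_o(i)-S_e(i)$ by summing over descents. Because $\sigma\in\X_{2n}$, every descent pair is even-odd, so each descent $(t,b)$ has even top $t\in S_e$ and odd bottom $b\in S_o$; moreover distinct descents have distinct tops and distinct bottoms (a value is the top, resp. bottom, of at most one descent), and no value is simultaneously a top and a bottom (parity forbids it). Viewing a descent as the interval $[b,t]$, its net contribution to $S_o(i)-S_e(i)$ is $[\,b<i\,]-[\,t<i\,]$ (Iverson brackets), which a three-case check ($i\le b$, $b<i\le t$, $t<i$) shows equals $1$ when $b<i\le t$ and $0$ otherwise. Summing over all descents and splitting $\{b<i\le t\}$ into $\{b<i<t\}\sqcup\{i=t\}$, I get that $S_o(i)-S_e(i)$ equals the number of descents straddling $i$ plus the number of descents whose top is $i$. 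The first term is $l_\sigma(i)+r_\sigma(i)$, and the second is $1$ precisely when $i$ is a (necessarily even) descent top, i.e. when $i\in S_e$, which in the notation of \eqref{def-s} is exactly $1$ minus the indicator ``$i\in S_o$ or $i\notin S$''. Substituting this into \eqref{def-s} makes the indicator cancel and yields $s(i)=l_\sigma(i)+r_\sigma(i)+1$, which is \eqref{lem-s}.

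Finally I would deduce that $S$ is a weak signature. The order condition $|S_o|=|S_e|$ is immediate, since both counts equal $\des\sigma$ by the top/bottom bijection noted above. For the prefix inequality $|S_o\cap[1,i]|\ge|S_e\cap[1,i]|$ I would invoke the positivity $s(i)=l_\sigma(i)+r_\sigma(i)+1\ge 1$ just established: writing $s(i)=S_o(i)-S_e(i)+\varepsilon(i)$ with $\varepsilon(i)\in\{0,1\}$, the three cases $i\in S_e$, $i\in S_o$, $i\notin S$ each turn $s(i)\ge 1$ into the desired inequality once one accounts for whether $i$ itself is added to the $S_o$- or $S_e$-count. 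I expect the only delicate point to be the boundary bookkeeping rather than anything conceptual: one must consistently separate the strict counts $S_o(i),S_e(i)$ from the non-strict counts $|S_o\cap[1,i]|,|S_e\cap[1,i]|$, and handle the borderline descent whose top is exactly $i$. Once the straddling interpretation and the three-case interval count are pinned down, both \eqref{lem-s} and the weak-signature property follow.
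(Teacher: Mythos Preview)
Your argument is correct. The core content matches the paper's: both identify $l_\sigma(i)+r_\sigma(i)$ as the number of descents $(t,b)$ with $b<i<t$, and both relate this count to $S_o(i)-S_e(i)$ via the same case split on whether $i$ is a descent top. The only real difference is organizational. The paper first proves the weak-signature property directly, by observing that the map sending each descent top $t\le i$ to its own descent bottom $b<t\le i$ is an injection $S_e\cap[i]\hookrightarrow S_o\cap[i]$, and only afterwards computes the embracing numbers case by case to obtain~\eqref{lem-s}. You reverse the order: you derive~\eqref{lem-s} first via the Iverson-bracket sum $\sum_{(t,b)}([b<i]-[t<i])$, and then extract the prefix inequality from $s(i)\ge 1$. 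Your route packages the two-case computation more uniformly, while the paper's injection argument gives the signature property in one line without the boundary bookkeeping you flag at the end; both are equally short once written out.
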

\begin{proof}
Let  $\sigma\in \X_{2n}$.  
For any $i\in [2n]$, there is an injection $\phi: S_e\cap [i]\to S_o\cap [i]$ which maps each decent top $t\in S_e\cap [i]$ to a descent bottom $b\in S_o\cap [i]$ such that $(t, b)$ forms a descent pair of $\sigma$.  Hence $S_o(i)\geq S_e(i)$ 
and  $S$ is a week signature of $[2n]$.
 
Next we compute the embracing numbers  
$l_{\sigma}(i)+r_{\sigma}(i)$ of $i\in [2n]$.
 \begin{enumerate}
\item If  $i\in S$ is  odd or $i\notin S$, 
the number of even-odd descents $(t,b)$ with $t<i$ is equal to $S_e(i)$,  so the embracing number of $i$ is $S_o(i)-S_e(i)$.
\item
If  $i\in S$ is  even, the number of even-odd descents $(t,b)$ 
with $t\leq i$ is equal to $S_e(i)+1$, 
 so $i$ has $S_o(i)-S_e(i)-1$ embraces.
\end{enumerate}
Combining the two cases and comparing with \eqref{def-s} we obtain \eqref{lem-s}.
\end{proof}

In what follows, for any $\sigma\in\X_{2n}$ we use $S_{\sigma}$ and 
 $s_{\sigma}$  to denote 
the weak signature of $\sigma$  and the associated signature function, respectively.

\begin{defi}\label{np}
An $\E$-permutation 
$\sigma\in\overbar{\X}_{2n}$ is  normalized 
if it satisfies   the  following conditions: 
 for $j\in [n]$,  
\begin{itemize}
\item[(a)] if $(2j-1, s(2j-1))\in S_{\sigma}\times (2\N+1)$, 
then $l_\sigma(2j)$ is even;
\item[(b)] if $(2j-1, s(2j-1))\in S_{\sigma}\times 2\N$,
then  $l_\sigma(2j-1)$ is even;
\item[(c)] if $(2j, s(2j))\in S_{\sigma}\times (2\N+1)$, then 
$r_\sigma(2j-1)$ is even;
\item[(d)]  if $(2j, s(2j))\in S_{\sigma}\times 2\N$, then 
$r_\sigma(2j)$ is even.
\end{itemize}
\end{defi}
Let $\widehat{\X}_{2n}$ be the set of  normalized  $\mathcal{E}$-permutations 
in $\overbar{\X}_{2n}$ and let 
$$
\widehat{\X}_{2n,k}=\{\sigma\in \widehat{\X}_{2n}: \des \sigma=k\}.
$$
\begin{exam}
The following are the first few normalized $\mathcal{E}$-permutations:
\begin{align*}
\widehat{\X}_2&=\{1\,2\}\\
\widehat{\X}_4&=\{1\,2\,3\,4, \; 2\,4\,1\,3\}\\
\widehat{\X}_6&=\{1\,2\,3\,4\,5\,6,\;1\,2\,4\,6\,3\,5,\; 2\,4\,1\,3\,5\,6,\; 2\,3\,4\,6\,1\,5,\; 2\,6\,1\,3\,4\,5,\; 2\,3\,6\,1\,4\,5,\; 2\,4\,6\,1\,3\,5\}.
\end{align*}
\end{exam}
\begin{prop}
For $\sigma\in\widehat{\X}_{2n,k}$, we have $\312\sigma\geq k$ and $\231\sigma\geq k$.
\end{prop}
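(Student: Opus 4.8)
The plan is to produce, for each of the $k$ descent tops, a distinct value that is left-embraced at least once, and symmetrically for the descent bottoms and right-embracings. Since by \eqref{embracea} and \eqref{embraceb} one has $\312\sigma=\sum_{v}l_\sigma(v)$ and $\231\sigma=\sum_{v}r_\sigma(v)$ with all summands nonnegative, exhibiting $k$ pairwise distinct values $w$ with $l_\sigma(w)\ge 1$ (resp. $r_\sigma(w)\ge 1$) immediately gives $\312\sigma\ge k$ (resp. $\231\sigma\ge k$). The engine driving each ``$\ge 1$'' will be a parity argument: I force the relevant embracing number to be \emph{odd} via Lemma~\ref{lem:signature} combined with the four clauses of Definition~\ref{np}.

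Fix a descent top $2a$ of $\sigma$; since no value is simultaneously a descent top and a descent bottom, there are exactly $k=\des\sigma$ such tops and they are pairwise distinct. The value to track is chosen according to the parity of $s_\sigma(2a)$. If $s_\sigma(2a)$ is even, clause (d) forces $r_\sigma(2a)$ even, while Lemma~\ref{lem:signature} gives $l_\sigma(2a)+r_\sigma(2a)=s_\sigma(2a)-1$, which is odd; hence $l_\sigma(2a)$ is odd, so $l_\sigma(2a)\ge 1$. If $s_\sigma(2a)$ is odd, I track $2a-1$ instead: because $\sigma$ is an $\mathcal{E}$-permutation it has no domino, so $2a-1$ is not a descent bottom and hence $2a-1\notin S_\sigma$; the definition of the signature function then yields $s_\sigma(2a-1)=s_\sigma(2a)+1$, which is even, so $l_\sigma(2a-1)+r_\sigma(2a-1)=s_\sigma(2a-1)-1$ is odd, and clause (c) gives $r_\sigma(2a-1)$ even, whence $l_\sigma(2a-1)$ is odd and $\ge 1$. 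In either case the top $2a$ furnishes a value $w(2a)\in\{2a-1,2a\}$ with $l_\sigma(w(2a))\ge 1$; these are pairwise distinct across the $k$ tops (each candidate is determined by its top, and an odd candidate $2a-1$ can never equal an even candidate $2a'$), so $\312\sigma\ge k$.

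The bound $\231\sigma\ge k$ is the mirror image, run over the $k$ descent bottoms using clauses (a) and (b). For a descent bottom $2b-1\in S_\sigma$: if $s_\sigma(2b-1)$ is even, clause (b) forces $l_\sigma(2b-1)$ even, so $r_\sigma(2b-1)$ is odd by Lemma~\ref{lem:signature}; if $s_\sigma(2b-1)$ is odd, the no-domino property gives $2b\notin S_\sigma$ and $s_\sigma(2b)=s_\sigma(2b-1)+1$ (even), and clause (a) forces $l_\sigma(2b)$ even, so $r_\sigma(2b)$ is odd. Each bottom thus yields a distinct value with positive $r_\sigma$, giving $\231\sigma\ge k$.

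The only genuinely delicate point, and the one I expect to be the main obstacle, is the correct pairing of the statistic with the parity of the signature: one must select $2a$ versus $2a-1$ (resp. $2b-1$ versus $2b$) according to the parity of $s_\sigma$ at the descent endpoint, and verify the one-step shift $s_\sigma(2a-1)=s_\sigma(2a)+1$ (resp. $s_\sigma(2b)=s_\sigma(2b-1)+1$). It is precisely here that the no-domino hypothesis of $\mathcal{E}$-permutations enters, guaranteeing that the neighbouring value lies outside $S_\sigma$ so that the shift holds; everything else is routine bookkeeping with Lemma~\ref{lem:signature} and Definition~\ref{np}.
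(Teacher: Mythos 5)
Your proof is correct and takes essentially the same route as the paper's: the same four-way case analysis on the parity of $s_\sigma$ at each descent endpoint, combined with Lemma~\ref{lem:signature} and the four clauses of Definition~\ref{np}, to force an odd (hence positive) left- or right-embracing number at one of the two values of the relevant doubleton. You are in fact slightly more explicit than the paper on the two points you flag, namely the shift $s_\sigma(2a-1)=s_\sigma(2a)+1$ coming from the no-domino property and the pairwise distinctness of the $k$ exhibited values.
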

\begin{proof}
As $\sigma\in\widehat{\X}_{2n,k}$, according to the definition of normalized $\mathcal{E}$-permutations, 
\begin{itemize}
\item[(a)] if $2i-1\in S_{\sigma}$ and $s_{\sigma}(2i-1)$ is odd, then $s_{\sigma}(2i)=s_{\sigma}(2i-1)+1$, so by Lemma~\ref{lem:signature}, we have $r_{\sigma}(2i)$ is odd.
\item[(b)] if $2i-1\in S_{\sigma}$ and $s_{\sigma}(2i-1)$ is even, then $l_{\sigma}(2i-1)$ is even, by Lemma~\ref{lem:signature}, $r_{\sigma}(2i-1)$ is odd.
\item[(c)] if $2i\in S_{\sigma}$ and $s_{\sigma}(2i)$ is odd then $r_{\sigma}(2i-1)$ is even, and $s_{\sigma}(2i-1)=s_{\sigma}(2i)+1$ is even, by Lemma~\ref{lem:signature}, $l_{\sigma}(2i-1)$ is odd.
\item[(d)] if $2i\in S_{\sigma}$ and $s_{\sigma}(2i)$ is even, then $\gamma_{\sigma}(2i)$ is even, also by Lemma~\ref{lem:signature}, we have $l_{\sigma}(2i)$ is odd.
\end{itemize}
So, for a descent bottom $2i-1$ of $\sigma$, either $2i-1$ or $2i$ contributes at least one $\231$ pattern. And for a descent top $2j$ of $\sigma$, either $2j$ or $2j-1$ contributes at least one $\312$ pattern. 
Therefore, $\312\sigma\geq k$ and $\231\sigma\geq k$.
\end{proof}

\begin{subequations}
\begin{thm}\label{thm:main-pq}
We have the $(p,q)$-analogue of \eqref{eu-gamma}
\begin{align}\label{pq-formula:gamma}
X_n(p,q,t):=\sum_{\sigma\in \X_{2n}} 
p^{\231\sigma}q^{\312\sigma}t^{\des \sigma}=\sum_{k=0}^{\lfloor n/2\rfloor}\gamma_{n,k}(p,q)\, t^k(1+t)^{n-2k},
\end{align}
{\rm with  coefficients}
\begin{align}\label{pq-coeff:gamma}
\gamma_{n,k}(p,q)=\sum_{\sigma\in\overline{\X}_{2n,k}}p^{\231\sigma}q^{\312\sigma}.
\end{align}
Moreover, the $\gamma$-coefficients $\gamma_{n,k}(p,q)$ have the factorization 
\begin{align}\label{x-gamma}
\gamma_{n,k}(p,q)=
(p+q)^{2k}\sum_{\sigma\in\widehat{\X}_{2n,k}}q^{\312\sigma-k}p^{\231\sigma-k}.
\end{align}
\end{thm}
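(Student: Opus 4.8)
The plan is to treat the three displayed identities separately, obtaining the first two by a one-line specialization and reserving the real work for the factorization \eqref{x-gamma}. Identities \eqref{pq-formula:gamma} and \eqref{pq-coeff:gamma} are precisely the case $a=b=y=1$ of the $\gamma$-formula \eqref{formula:gamma}--\eqref{coeff:gamma} in Theorem~\ref{thm:gamma}. Indeed, putting $a=\bar a=b=\bar b=1$ in the octuple generating function \eqref{x-even-odd} collapses $X_n$ to $\sum_{\sigma\in\X_{2n}}p^{\231\sigma}q^{\312\sigma}t^{\des\sigma}=X_n(p,q,t)$, while $\gamma_{n,k}(1,1,p,q)=\sum_{\sigma\in\overbar{\X}_{2n,k}}p^{\231\sigma}q^{\312\sigma}$ is exactly the coefficient $\gamma_{n,k}(p,q)$ of \eqref{pq-coeff:gamma}. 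So the first two assertions need no new argument, and everything reduces to proving the $(p+q)^{2k}$ factorization \eqref{x-gamma}.

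For \eqref{x-gamma} I would refine the \emph{Inter-hopping} action used in Section~5 by one further level, now acting inside $\overbar{\X}_{2n,k}$. The starting observation is that an even-odd descent permutation has no double descents, since a shared middle entry would have to be simultaneously even (as a descent top) and odd (as a descent bottom); hence a permutation in $\overbar{\X}_{2n,k}$ has exactly $k$ descent tops (all even) and $k$ descent bottoms (all odd), and these $2k$ entries are precisely the elements of the weak signature set $S_\sigma$ of Lemma~\ref{lem:signature}. I would attach to each $x\in S_\sigma$ a local involution $\varphi_x$ that repositions $x$ (or its domino partner) so as to convert one right-embracing of $x$ into a left-embracing or vice versa. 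By construction $\varphi_x$ fixes the descent set, hence $\des\sigma$ and the whole signature function $s_\sigma$, and it raises $\231\sigma$ by one while lowering $\312\sigma$ by one, or conversely; in particular the sum $\231\sigma+\312\sigma=\sum_i\bigl(s_\sigma(i)-1\bigr)$ coming from \eqref{lem-s} is left invariant.

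Next I would verify that the $2k$ involutions $\{\varphi_x\}_{x\in S_\sigma}$ commute and act independently, so that they generate a $\Z_2^{2k}$-action on $\overbar{\X}_{2n,k}$ along which $\231\sigma$ ranges over an interval of length $2k$ with binomial multiplicities $\binom{2k}{j}$, the complementary value $\312\sigma$ moving oppositely. Summing the weight $p^{\231\sigma}q^{\312\sigma}$ over one orbit then telescopes to $(p+q)^{2k}$ times the minimal monomial $p^{\231_{\min}}q^{\312_{\min}}$. Finally I would show that each orbit contains exactly one representative obeying all four parity constraints (a)--(d) of Definition~\ref{np}, namely a normalized $\tau\in\widehat{\X}_{2n,k}$, and that this $\tau$ is the central element of the orbit; combined with the Proposition above (which gives $\231\tau\ge k$ and $\312\tau\ge k$) this identifies $\231_{\min}=\231\tau-k$ and $\312_{\min}=\312\tau-k$. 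Summing over orbits yields $\gamma_{n,k}(p,q)=(p+q)^{2k}\sum_{\tau\in\widehat{\X}_{2n,k}}p^{\231\tau-k}q^{\312\tau-k}$, which is \eqref{x-gamma}.

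The main obstacle is the faithful construction of the involutions $\varphi_x$: one must describe, as an explicit operation on one-line words, how to move an entry of $S_\sigma$ so as to exchange exactly one right-embracing for one left-embracing while keeping the permutation inside $\overbar{\X}_{2n,k}$ (preserving the even-odd descent pattern, the descent number, and the domino-free $\mathcal{E}$-condition), and one must prove that distinct entries give commuting, independent toggles. This independence is exactly what forces $\231\sigma$ to sweep the full range with binomial multiplicities, so that the orbit factor is cleanly $(p+q)^{2k}$; it is also where the parity conditions of Definition~\ref{np} single out a unique central representative. The nonnegativity of the exponents $\231\tau-k$ and $\312\tau-k$, supplied by the Proposition, is what guarantees that \eqref{x-gamma} is a genuine polynomial identity with nonnegative coefficients.
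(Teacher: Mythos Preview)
Your proposal is correct and follows essentially the same route as the paper: \eqref{pq-formula:gamma}--\eqref{pq-coeff:gamma} come from specializing Theorem~\ref{thm:gamma}, and \eqref{x-gamma} is proved in Section~6 via commuting involutions $\varphi_x$ (one for each $x\in S_\sigma$) that preserve $S_\sigma$, shift $(\231\sigma,\312\sigma)$ by $(\pm1,\mp1)$, and single out the normalized permutation of Definition~\ref{np} as the unique orbit representative, so that each orbit sum is $(p+q)^{2k}p^{\231\hat\sigma-k}q^{\312\hat\sigma-k}$. The paper's explicit construction of $\varphi_x$ splits into four cases (A)--(D) according to the parities of $x$ and $s_\sigma(x)$---in two of them $\varphi_x$ moves the domino partner of $x$ between adjacent descent pairs, and in the other two it rebuilds the permutation by reinserting $x$ and then the remaining letters---which is exactly the technical obstacle you flagged.
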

\end{subequations}
Clearly Theorem~\ref{thm:gamma} implies \eqref{pq-formula:gamma}.
We shall prove Theorem~\ref{thm:main-pq} \eqref{x-gamma} in Section~6.
From Theorem~\ref{thm:main-pq} and  Corollary~\ref{cor:gammacf}  we derive the following J-fraction.
\begin{cor} We have 
\begin{multline}\label{p3}
\sum_{n\geq0}\sum_{\sigma\in \widehat{\X}_{2n}}p^{\231\sigma }q^{\312\sigma }t^{\des \sigma}x^{n}\\
=\cfrac{1}{
1  -[1]_{p,q}^2 x-\cfrac{{2\choose 2}_{p,q}^2\,t\cdot  x^2}{
1 - [2]_{p,q}^2 x-\cfrac{{3\choose 2}_{p,q}^2\,t\cdot  x^2}{
1-[3]_{p,q}^2 x-\cfrac{{4\choose 2}_{p,q}^2\,t\cdot x^2}{
\cdots}}}}.
\end{multline}
\end{cor}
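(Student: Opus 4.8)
The plan is to read off \eqref{p3} from the J-fraction of Corollary~\ref{cor:gammacf} after a single substitution of variables, with the factorization \eqref{x-gamma} telling us which substitution to make; throughout, everything is an identity of formal power series in $x$ with coefficients in $\mathbb{Z}[p,q,t]$.

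First I would record the generating function of the $\gamma$-coefficients. For a generic variable $w$ set
\[
Q_n(w):=\sum_{\sigma\in\overbar{\X}_{2n}}p^{\231\sigma}q^{\312\sigma}w^{\des\sigma}=\sum_{k=0}^{\lfloor n/2\rfloor}\gamma_{n,k}(p,q)\,w^{k},
\]
the second equality being the definition \eqref{pq-coeff:gamma} of $\gamma_{n,k}(p,q)$ sorted by the number of descents. Putting $a=\bar a=1$ in Corollary~\ref{cor:gammacf}, so that $[a,n]_{p,q}=[\bar a,n]_{p,q}=[n]_{p,q}$, and renaming $t$ as $w$, gives at once a J-fraction for $1+\sum_{n\geq1}Q_n(w)x^{n}$ with data $b_{n-1}=[n]_{p,q}^{2}$ and $\lambda_n=w\,[n]_{p,q}^{2}[n+1]_{p,q}^{2}$.

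Next I would express the left side of \eqref{p3} as a specialization of $Q_n$. Since every $\sigma\in\widehat{\X}_{2n,k}$ satisfies $\231\sigma\geq k$ and $\312\sigma\geq k$, the factorization \eqref{x-gamma} reads, for each fixed $k$,
\[
\sum_{\sigma\in\widehat{\X}_{2n,k}}p^{\231\sigma}q^{\312\sigma}=(pq)^{k}\sum_{\sigma\in\widehat{\X}_{2n,k}}p^{\231\sigma-k}q^{\312\sigma-k}=\frac{(pq)^{k}}{(p+q)^{2k}}\,\gamma_{n,k}(p,q),
\]
and multiplying by $t^{k}$ and summing over $k$ gives
\[
\sum_{\sigma\in\widehat{\X}_{2n}}p^{\231\sigma}q^{\312\sigma}t^{\des\sigma}=\sum_{k=0}^{\lfloor n/2\rfloor}\gamma_{n,k}(p,q)\Big(\tfrac{pqt}{(p+q)^{2}}\Big)^{k}=Q_n\!\Big(\tfrac{pqt}{(p+q)^{2}}\Big).
\]
Thus the left side of \eqref{p3} is the series $1+\sum_{n\geq1}Q_n(w)x^{n}$ evaluated at $w=pqt/(p+q)^{2}$.

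Finally I would substitute $w=pqt/(p+q)^{2}$ into the J-fraction for $Q_n$. As each $Q_n(w)$ is a polynomial in $w$ and each $\lambda_n$ is linear in $w$, this is a legitimate operation on the continued fraction: it leaves $b_{n-1}=[n]_{p,q}^{2}$ unchanged and sends $\lambda_n$ to $\tfrac{pqt}{(p+q)^{2}}[n]_{p,q}^{2}[n+1]_{p,q}^{2}$. Using $[2]_{p,q}=p+q$ and $\binom{n+1}{2}_{p,q}=[n+1]_{p,q}[n]_{p,q}/[2]_{p,q}$, which is a genuine $(p,q)$-integer so that no denominator survives, this equals $pq\binom{n+1}{2}_{p,q}^{2}\,t$, yielding the J-fraction in \eqref{p3}. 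The one point needing care is the bookkeeping of the competing factors $(pq)^{k}$ and $(p+q)^{2k}$, whose quotient converts the variable $w=t$ of Corollary~\ref{cor:gammacf} into $w=pqt/(p+q)^{2}$; as a check, $\widehat{\X}_{4}=\{1234,\,2413\}$ gives $x^{2}$-coefficient $1+pqt$, matching $b_0^{2}+\lambda_1$ and pinning down the factor $pq$ in $\lambda_1$, while setting $p=q=1$ recovers \eqref{J-frac-NMG}.
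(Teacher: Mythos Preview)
Your approach is exactly the one the paper indicates (``From Theorem~\ref{thm:main-pq} and Corollary~\ref{cor:gammacf} we derive\ldots''): specialize Corollary~\ref{cor:gammacf} to $a=\bar a=1$ to obtain the J-fraction for $\sum_{n}\sum_{\sigma\in\overbar{\X}_{2n}}p^{\231\sigma}q^{\312\sigma}w^{\des\sigma}x^{n}$, then use the factorization \eqref{x-gamma} to rewrite the left side of \eqref{p3} as the substitution $w=pqt/(p+q)^{2}$. Your execution of these steps is correct.

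One point worth flagging: your computation gives $\lambda_n=pq\binom{n+1}{2}_{p,q}^{2}\,t$, whereas the displayed continued fraction \eqref{p3} has $\lambda_n=\binom{n+1}{2}_{p,q}^{2}\,t$, without the $pq$. Your sanity check at $n=2$ is decisive here: $\widehat{\X}_{4}=\{1234,\,2413\}$ contributes $1+pqt$, which matches $b_0^{2}+\lambda_1$ only if $\lambda_1=pqt$, not $t$. So you have in fact proved the corrected version of \eqref{p3} with the extra factor $pq$ in each $\lambda_n$; the statement as printed appears to be missing this factor (the check ``reduces to \eqref{J-frac-NMG} when $p=q=t=1$'' cannot detect it). You should say so explicitly rather than writing ``yielding the J-fraction in \eqref{p3}'', which as it stands is not quite what you derived.
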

Note that \eqref{p3} reduces to \eqref{J-frac-NMG} when $p=q=t=1$.
  Thus $|\widehat{\X}_{2n}|=h_n$ for $n\geq 1$.

\section{Even-odd permutations and 
labelled Motzkin paths 
}
A \emph{Motzkin path}  of length  $n$ is a sequence of points 
$\omega:=(\omega_0, \ldots, \omega_n)$  in the integer plan 
$\mathbb{Z}\times\mathbb{Z}$  such that 
\begin{itemize}
\item $\omega_0=(0,0)$ and $\omega_n=(n,0)$,
\item $\omega_i-\omega_{i-1}\in \{(1,0), (1, 1), (1,-1)\}$,
\item $\omega_i:=(x_i, y_i)\in \N\times \N$ for $i=0,\ldots, n$.
\end{itemize}
In other words, a Motzkin path of length $n\geq 0$ is a lattice path in the 
right quadrant $\N\times \N$ starting at $(0,0)$ and  ending at $(n,0)$, each step $s_i=\omega_i-\omega_{i-1}$ is a 
rise $\mathsf{U}=(1,1)$, fall
$\mathsf{D}=(1,-1)$ or level $\mathsf{L}=(1,0)$. A 2-Motzkin path is a Motzkin path with two types of level-steps $\mathsf{L}_1$ and $\mathsf{L}_2$.
Let $\mathcal{MP}_n$ be the set of 2-Motzkin paths of 
length $n$.
Clearly we can  identify 
 2-Motzkin paths of length $n$ with words $w$ on 
 $\{\mathsf{U, L_1, L_2, D}\}$ of length $n$ such that all prefixes of $w$ 
 contain no more $\mathsf{D}$'s than $\mathsf{U}$'s and the number of $\mathsf{D}$'s equals the number of $\mathsf{D}'s$.
 The \emph{height} of $\omega_i$ is 
 the ordinate of $\omega_i$ and denoted by $h(\omega_i)$, which is also called the height 
 of the step $s_i$.

Let  $\mathbf{a}=(a_i)_{i\geq 0}$,
$\mathbf{b}=(b_i)_{i\geq 0}$, $\mathbf{b'}=(b'_i)_{i\geq 0}$  and $\mathbf{c}=(c_i)_{i\geq 1}$  be  four  sequences of indeterminates; we will work in the ring $\Z[[\mathbf{a}, \mathbf{b}, \mathbf{b'}, \mathbf{c}]]$. 
To each Motzkin path $\omega$ we assign a weight $W(\omega)$ that is the product of the weights for the individual steps, where 
a up-step (resp. down-step) at height 
$i$ gets weight  $a_i$ (resp. $\lambda_i$), and 
a level-step of type 1 (resp. 2) at height $i$ gets weight 
$b_i$ (resp.  $b'_i$). 
The following  result of 
 Flajolet~\cite{Fl80} is folklore.
\begin{lem}[Flajolet]\label{flajolet} We have 
$$
\sum_{n=0}^{\infty}\left(\sum_{\omega\in \mathcal{MP}_n}W(\omega)\right)x^n
=\cfrac{1}{1-(b_0+b_0')x-\cfrac{a_0c_1x^2}{1-(b_1+b_1')x-\cfrac{a_1c_2x^2}{1-(b_2+b_2')x-\cdots}}}.
$$
\end{lem}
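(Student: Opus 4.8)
The plan is to prove the identity by a first-return (arch) decomposition of weighted $2$-Motzkin paths, organized by height, which produces the continued fraction through a self-similar functional recursion. This is the standard route to Flajolet's theorem, and here it is especially clean because all the bookkeeping is encoded in the weighting $W$ already introduced above.

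First I would introduce, for each $k\geq 0$, the generating function $H_k(x)$ enumerating the weighted $2$-Motzkin paths that start and end at height $k$ and never descend below height $k$, each step keeping the weight it carries in the weighting $W$. By construction $H_0(x)$ is exactly the left-hand side $\sum_{n}\bigl(\sum_{\omega\in\mathcal{MP}_n}W(\omega)\bigr)x^n$, since a genuine $2$-Motzkin path is precisely a path from height $0$ to height $0$ that stays in $\N\times\N$.

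Next I would establish the recursion
$$
H_k(x)=\frac{1}{1-(b_k+b_k')x-a_kc_{k+1}x^2H_{k+1}(x)}.
$$
The combinatorial input is that every path counted by $H_k$ factors uniquely as a (possibly empty) sequence of elementary arches, where an arch is either a single level step at height $k$ (weight $(b_k+b_k')x$), or a rise from level $k$ to level $k+1$ (weight $a_k$), followed by a path that stays at height $\geq k+1$ and returns to level $k+1$ (enumerated by $H_{k+1}$), followed by the matching fall back to level $k$ (weight $c_{k+1}$), contributing $a_kc_{k+1}x^2H_{k+1}$. Summing the geometric series over all such sequences of arches yields the displayed recursion.

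Finally I would unfold the recursion: repeatedly substituting the formula for $H_{k+1}$ into that for $H_k$, starting from $H_0$, reproduces exactly the stated continued fraction. The one point requiring care — and the main, though mild, obstacle — is to interpret the infinite continued fraction as a genuine identity in $\Z[[x]]$ rather than a merely formal rewriting. Here I would invoke the height bound: a $2$-Motzkin path of length $n$ never exceeds height $\lfloor n/2\rfloor$, so modulo $x^{N+1}$ only the finitely many functions $H_0,\ldots,H_{\lfloor N/2\rfloor}$ can contribute, and the unfolding terminates after finitely many steps. Consequently the successive convergents of the continued fraction stabilize coefficient by coefficient, the right-hand side is a well-defined element of $\Z[[\mathbf{a},\mathbf{b},\mathbf{b'},\mathbf{c}]][[x]]$, and the recursion shows it agrees with $H_0$ to all orders in $x$, completing the proof.
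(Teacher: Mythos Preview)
Your argument is correct and is precisely the classical first-return (arch) decomposition that proves Flajolet's theorem. The paper itself does not supply a proof of this lemma: it is stated as a folklore result with a citation to Flajolet~\cite{Fl80}, so there is no in-paper argument to compare against, and your write-up would serve perfectly well as the omitted proof.
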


\begin{defi}
A path diagram of length $n$ is a triplet $(\omega,(\xi,\xi'))$, where $\omega$ is a Motzkin path of length $n$,  $\xi=(\xi_1,\ldots,\xi_n)$ and $\xi'=(\xi'_1,\ldots,\xi'_n)$ are two integer sequences satisfying  the following conditions:
\begin{subequations}
\begin{itemize}
\item 
If the $k$-th step of $\omega$ is a rise with  height $h\geq 0$, then
\begin{align}\label{case1}
(\xi_i,\xi_i')\in \{0, \ldots, h\}\times \{0, \ldots, h+1\}.
\end{align}
\item
If the $k$-th step of $\omega$ is a fall with  height $h>0$, then
\begin{align}\label{case2}
(\xi_i,\xi_i')\in \{0, \ldots, h\}\times \{0, \ldots, h-1\}.
\end{align}
\item
If the $k$-th step of $\omega$ is a level  of type 1 or 2 with 
 height $h\geq 0$, then
\begin{align}\label{case3}
(\xi_i,\xi_i')\in \{0, \ldots, h\}\times \{0, \ldots, h\}.
\end{align}
\end{itemize}
\end{subequations}
We denote by $\mathcal{PD}_n$ the set of path diagrams  of length $n$. 
\end{defi}

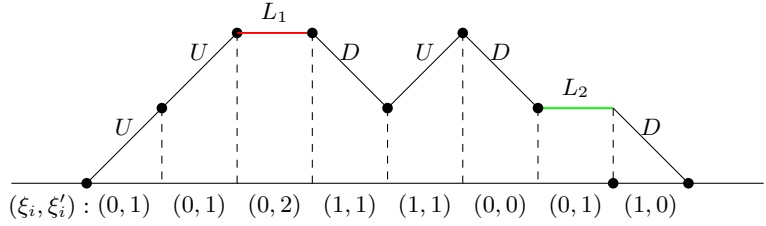
\begin{figure}[t]
\begin{tikzpicture}
\tikzstyle{every node}=[font=\scriptsize]
\draw (0,0)--(1,1);
\fill (0,0) circle (2pt);
\fill (1,1) circle (2pt);
\draw (1,1)--(2,2);
\fill (2,2) circle (2pt);
\draw[line width=0.25mm, red] (2,2)--(3,2);
\fill (3,2) circle (2pt);
\draw (3,2)--(4,1);
\draw (4,1)--(5,2);
\draw (5,2)--(6,1);
\draw (7,1)--(8,0);
\draw[line width=0.25mm, green] (6,1)--(7,1);
\fill (4,1) circle (2pt);
\fill (5,2) circle (2pt);
\fill (6,1) circle (2pt);
\fill (7,0) circle (2pt);
\fill (8,0) circle (2pt);
\draw (-1,0)--(9,0);
\draw[dashed] (1,1)--(1,0);
\draw[dashed] (2,2)--(2,0);
\draw[dashed] (3,2)--(3,0);
\draw[dashed] (4,1)--(4,0);
\draw[dashed] (5,2)--(5,0);
\draw[dashed] (6,1)--(6,0);
\draw[dashed] (7,1)--(7,0);
\node[below] at (-0.5,0) {$(\xi_i,\xi_i'):$};
\node[below] at (0.5,0) {$(0,1)$};
\node[below] at (1.5,0) {$(0,1)$};
\node[below] at (2.5,0) {$(0,2)$};
\node[below] at (3.5,0) {$(1,1)$};
\node[below] at (4.5,0) {$(1,1)$};
\node[below] at (5.5,0) {$(0,0)$};
\node[below] at (6.5,0) {$(0,1)$};
\node[below] at (7.5,0) {$(1,0)$};
\node[above] at (2.5,2) {$L_1$};
\node[above] at (6.5,1) {$L_2$};
\node[above] at (0.5,0.5) {$U$};
\node[above] at (1.5,1.5) {$U$};
\node[above] at (3.5,1.5) {$D$};
\node[above] at (4.5,1.5) {$U$};
\node[above] at (5.5,1.5) {$D$};
\node[above] at (7.5,0.5) {$D$};
\end{tikzpicture}\caption{Illustration of $\Phi: \sigma\mapsto (\omega,(\xi,\xi'))$.}\label{F1}
\end{figure}

For $\sigma\in\X_{2n}$, we construct the path diagram $\Phi(\sigma)=(\omega,(\xi,\xi'))$ in  $\mathcal{PD}_n$
as in the following. 
Let
 $\overline{S}_{\sigma}=[2n]\setminus S_{\sigma}$.
For $j\in \{1, \ldots, n\}$ we define  the $j$-th step $s_j=\omega_j-\omega_{j-1}$ of 
the path
$\omega=(\omega_0, \omega_1,\ldots, \omega_n)$  by 
\begin{subequations}
\begin{equation}\label{MP}
s_j=\begin{cases}
U& \textrm{if}\quad (2j-1, 2j)\in S_{\sigma}\times {\overline S}_{\sigma}\\
D&\textrm{if}\quad (2j-1, 2j)\in 
{\overline S}_{\sigma}\times S_{\sigma}\\
L_1&\textrm{if}\quad 
(2j-1, 2j)\in 
{\overline S}_{\sigma}\times {\overline S}_{\sigma}\\
L_2&\textrm{if}\quad (2j-1, 2j)\in 
 S_{\sigma}\times S_{\sigma}
\end{cases}
\end{equation}
and  the bi-sequence $(\xi, \xi')$ by
\begin{align}\label{xi-s}
(\xi_j, \xi'_j)=(r_\sigma(2j-1), r_\sigma(2j)).
\end{align}
\end{subequations}

The \emph{restriction} of 
$\sigma$ on $[2j]$ is the  word $\pi_j$ obtained from $\sigma$ by
replacing each subword of $\sigma$ consisting of consecutive letters 
 greater than $2j$ by a \emph{slot} $\x$. 
It is easier to  describe the above  construction 
 by looking at the successive restrictions 
of $\sigma$ on $\{1, \ldots, 2j\}$ for $j\in [n]$. 
 For example, if
 $\sigma=2\;6\;8\cdot 1\;4\;7\;14\cdot 9\;10\;12\cdot 3\;5\;11\;15\;16\cdot 13\in\X_{16}$, then $S=\{1,3,9,13\}\cup\{8,12,14,16\}$. Hence $\pi_0=\x$,  the successive restrictions of $\sigma$ read as follows:
 

\begin{figure}[h]
\begin{tabular}{c|l|c|c|c}
$i$&$\pi_j$&$(2j-1, 2j)$&$(\xi_j,\xi_j')$&$(s_{\sigma}(2j-1), s_{\sigma}(2j))$\\\hline
\\[-1em]
1&$2\;\x \;1\;\x$& $(1,2)\in S_{\sigma}\times {\overline S}_{\sigma}$& (0,1)&(1,2)\\\hline
\\[-1em]
2&$2\;\x\; 1\;4\;\x\;3\; \x$& $(3,4)\in S_{\sigma}\times {\overline S}_{\sigma}$&(0,1)&(2,3)\\\hline
\\[-1em]
3&$2\;6\;\x\; 1\;4\;\x\;3\; 5\;\x$& $(5,6)\in {\overline S}_{\sigma}\times {\overline  S}_{\sigma}$&(0,2)&(3,3)\\
\hline
\\[-1em]
4&$2\;6\;8\;1\;4\;7\;\x\;3\; 5\;\x$& $(7,8)\in {\overline S}_{\sigma}\times S_{\sigma}$&(1,1) &(3,2)\\
\hline
\\[-1em]
5&$2\;6\;8\;1\;4\;7\;\x\;9\;10\;\x\;3\; 5\;\x$&
$(9,10)\in S_{\sigma}\times {\overline S}_{\sigma}$&(1,1)&(2,3)\\
\hline
\\[-1em]
6& $2\;6\;8\;1\;4\;7\;\x\;9\;10\;12\;3\; 5\;11\;\x$
&$(11,12)\in {\overline S}_{\sigma}\times  S_{\sigma}$ &(0,0)&(3,2)\\
\hline
\\[-1em]
7&$2\;6\;8\;1\;4\;7\;14\;9\;10\;12\;3\; 5\;11\;\x\;13\;\x$& $(13,14)\in S_{\sigma}\times S_{\sigma}$&(0,1)&(2,2) \\
\hline
\\[-1em]
8&$2\;6\;8\;1\;4\;7\;14\;9\;10\;12\;3\; 5\;11\;15\;16\;13\;\x$& $(15,16)\in {\overline S}_{\sigma}\times  S_{\sigma}$&(1,0)&(2,1)
\end{tabular}
\caption{The restrictions $\pi_j$ of $\sigma\in \X_{16}$ for $j\in [8]$.}
\end{figure}

\medskip

Hence the corresponding step sequence is $(U,U,L_1, D, U,D,L_2, D)$ with height sequence $(0,1,2,2,1,2,1, 1)$. The corresponding  path diagram
 $(\omega,(\xi,\xi'))$ is illustrated in Figure~\ref{F1}.

\begin{lem}\label{bijection}  The mapping
$\Phi: \sigma\mapsto (\omega,(\xi,\xi'))$ is a bijection from $\X_{2n}$ to  $\mathcal{PD}_{n}$ such that
\begin{subequations}
\begin{align}
\dom\,\sigma&=|L_2|_{\omega},\label{3.3a}\\
\des\sigma&=|U|_{\omega}+|L_2|_{\omega},\\
\231\sigma&=\sum_{i=1}^n (\xi_i+\xi_i'),\\
\312\sigma&=|U|_{\omega}+|D|_{\omega}+\sum_{i=1}^n (h(\omega_i)-\xi_i+h(\omega_i)-\xi'_i),\\
(h(\omega_{j-1}), h(\omega_j))&=(s_{\sigma}(2j-1)-1, s_{\sigma}(2j)-1) \qquad j\in [n],\label{3.3e}
\end{align}
\end{subequations}
where $|A|_\omega$ is the number of steps of type $A$ in the path $\omega$.
\end{lem}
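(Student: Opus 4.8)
The plan is to hang the entire lemma on the single identity \eqref{3.3e}, which matches the height of $\omega$ with the signature function $s=s_\sigma$, and then to read off every remaining assertion from Lemma~\ref{lem:signature} and the step dictionary \eqref{MP}.

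First I would prove \eqref{3.3e}. Reading $s$ off \eqref{def-s}, a short case check against \eqref{MP} shows that $s(2j)-s(2j-1)$ equals $+1$, $0$ or $-1$ according as the $j$-th step is a rise, a level, or a fall, so the increment of $s$ across the pair $\{2j-1,2j\}$ agrees with the vertical displacement of that step; a second short check gives $s(2j+1)=s(2j)$, i.e. $s$ does not change across a pair boundary. Since $s(1)=1$, telescoping these two facts yields $h(\omega_{j-1})=s(2j-1)-1$ and $h(\omega_j)=s(2j)-1$ for all $j$, which is \eqref{3.3e}; in particular $h(\omega_0)=h(\omega_n)=0$, so $\omega$ is genuinely a Motzkin path.

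With \eqref{3.3e} in hand the membership $\Phi(\sigma)\in\mathcal{PD}_n$ and four of the five statistics are essentially bookkeeping. Lemma~\ref{lem:signature} gives $r_\sigma(i)=s(i)-1-l_\sigma(i)$, whence $0\le r_\sigma(i)\le s(i)-1$; substituting \eqref{xi-s} and \eqref{3.3e} turns this into $0\le\xi_i\le h(\omega_{i-1})$ and $0\le\xi'_i\le h(\omega_i)$, which are exactly the three cases \eqref{case1}--\eqref{case3}, so $\Phi$ lands in $\mathcal{PD}_n$. Because the descent tops of $\sigma$ are precisely the even elements of $S_\sigma$, and by \eqref{MP} one has $2j\in S_\sigma$ iff the $j$-th step is $D$ or $L_2$, I get $\des\sigma=|D|_\omega+|L_2|_\omega$, which becomes the stated $|U|_\omega+|L_2|_\omega$ upon using $|U|_\omega=|D|_\omega$. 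An $L_2$-step is the only one with both $2j-1,2j\in S_\sigma$, and then---odd elements of $S_\sigma$ being descent bottoms and even ones descent tops---the pair $\{2j-1,2j\}$ is a domino, giving $\dom\sigma=|L_2|_\omega$. The identity for $\231\sigma$ is immediate from \eqref{embraceb} and \eqref{xi-s}. Finally $l_\sigma(i)=s(i)-1-r_\sigma(i)$ and \eqref{3.3e} give $l_\sigma(2j-1)=h(\omega_{j-1})-\xi_j$ and $l_\sigma(2j)=h(\omega_j)-\xi'_j$; summing via \eqref{embracea} and regrouping the heights (again invoking $|U|_\omega=|D|_\omega$) yields the remaining identity for $\312\sigma$.

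The part that demands genuine work, and where I expect the main obstacle, is showing that $\Phi$ is a bijection. I would supply the inverse as the insertion process already implicit in the successive restrictions $\pi_0,\pi_1,\dots,\pi_n$: one scans the steps of $\omega$ from left to right while maintaining the restriction $\pi_{j-1}$, whose number of slots $\x$ is exactly $h(\omega_{j-1})+1$, and inserts the pair $\{2j-1,2j\}$ to pass from $\pi_{j-1}$ to $\pi_j$. The step type prescribes the local descent role of the two new letters---whether each enters $S_\sigma$ as a descent top or bottom, or instead fills or opens a slot---while $\xi_j$ and $\xi'_j$ fix their horizontal placement by prescribing how many slots sit to their right, that is, their eventual right-embracing numbers. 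The inequalities \eqref{case1}--\eqref{case3} are precisely what guarantee that the needed slots are available at every stage, so the procedure is well-defined and, by construction, inverts \eqref{MP}--\eqref{xi-s}. The delicate points I would verify are that no insertion ever creates a forbidden odd--even descent (so that $\pi_n=\sigma\in\X_{2n}$), that the descent tops and bottoms of the output coincide with the set $S_\sigma$ encoded by $\omega$, and that the right-embracing counts of the inserted letters come out equal to $\xi_j$ and $\xi'_j$; granting these three checks, $\Phi$ and the insertion are mutually inverse and the lemma follows.
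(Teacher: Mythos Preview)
Your plan is the same as the paper's: use Lemma~\ref{lem:signature} to check that $\Phi$ lands in $\mathcal{PD}_n$, verify the statistic identities (the paper calls this ``trivial'' and omits it; you supply the details, hanging everything on \eqref{3.3e}), and exhibit the inverse as the slot-insertion procedure implicit in the restrictions $\pi_0,\pi_1,\dots,\pi_n$. The paper spells out the four insertion rules case by case, while you describe them one level up and list the residual checks; the content is identical.

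One caution on the $\les$ identity. Your computation correctly gives
\[
\les\sigma=\sum_{j=1}^n\bigl(h(\omega_{j-1})-\xi_j+h(\omega_j)-\xi'_j\bigr),
\]
and since $h(\omega_0)=h(\omega_n)=0$ one has $\sum_j h(\omega_{j-1})=\sum_j h(\omega_j)$, so this already equals $\sum_j\bigl(2h(\omega_j)-\xi_j-\xi'_j\bigr)$ with no further term. The extra $|U|_\omega+|D|_\omega$ in the displayed formula is a slip in the statement (on the paper's running example it overshoots $\les\sigma=10$ by $6$), and no ``regrouping'' will manufacture it; you should simply record the identity you actually derived, which is exactly the $q$-exponent used downstream in the step weights \eqref{weight:dh}.
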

\begin{proof}
First of all, Lemma~\ref{lem:signature} ensures that 
$(\omega,(\xi,\xi'))$ is a  path diagram in $\mathcal{PD}_{n}$.
The trivial verification of \eqref{3.3a}-\eqref{3.3e} is omitted. Note that 
$$
0\leq \xi_j\leq s_{\sigma}(2j-1)-1, \quad 0\leq \xi_j'\leq s_{\sigma}(2j)-1
$$
To show  that $\Phi$  is a bijection,  we construct the inverse mapping  $\Phi^{-1}$. Starting from a path diagram $(\omega,(\xi,\xi'))\in\mathcal{PD}_n$, 
by $\omega$ and \eqref{MP}, we determine  the weak signature $S_{\sigma}$.
For $j\in [n]$ we construct the permutation $\sigma$ step by step 
by inserting $2j-1$ and $2j$ in
 the word $\pi_{j-1}$  with $\pi_0=\x$ as follows:
\begin{enumerate}
\item if the $j$-th step is \emph{rise},
we replace  the $\xi_j+1$-th slot from right to left 
in $\pi_{j-1}$ by $\x (2j-1)\x$ and then replace 
the $\xi_j'+1$-th slot by $2j\x$;
\item if the $j$-th step is \emph{fall},
we replace  
the $\xi_j+1$-slot in $\pi_{j-1}$ by $(2j-1)\x$ and then 
the $\xi_j'+2$-th slot in by $(2j)$;
\item if the $j$-th step is \emph{level of type 1},
we replace  the $\xi_j+1$-th slot in $\pi_{j-1}$ by $(2j-1)\x$ and then replace the $\xi_j'+1$-slot by $2j\x$;
\item if the $j$-th step is \emph{level of type 2},
we replace  the $\xi_j+1$-th slot in $\pi_{j-1}$ by $\x(2j-1)\x$ and then replace the $\xi_j'+2$-th slot by $2j$.
\end{enumerate}
The permutation $\sigma$ is obtained by deleting the final slot $\x$ in $\pi_{n}$.
\end{proof}

%
%

\begin{lem} 
For $\sigma\in \X_{2n}$ let 
$\Phi(\sigma)=(\omega,(\xi,\xi'))$ be the path diagram in $\mathcal{PD}_{n}$.
Then
\begin{subequations}
\begin{align}\label{xi}
\lema\,\sigma&=\sum_{i=1}^n\chi(\xi_i'=h(w_i)),\\
\loma\,\sigma&=\sum_{i=1}^n\chi(\xi_i=h(w_i))
\chi(s_i\in \{D, \,  L_1\}),\\
\remi\,\sigma&=\sum_{i=1}^n\chi(\xi_i=0)\chi(s_i\in \{U,\, L_1\}),\\
\romi\,\sigma&=\sum_{i=1}^n\chi(\xi_i=0)\chi(s_i\in \{U,\, D, \,L_1\}),
\end{align}
\end{subequations}
\end{lem}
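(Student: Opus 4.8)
The plan is to transport each of the four statistics through the bijection $\Phi$ of Lemma~\ref{bijection} and to read it off from the coordinates $\xi_i,\xi_i'$ together with the step heights. The only inputs needed are the encoding \eqref{xi-s}, namely $\xi_i=r_\sigma(2i-1)$ and $\xi_i'=r_\sigma(2i)$, and the height dictionary furnished by Lemma~\ref{lem:signature} and \eqref{3.3e}, which reads $h(\omega_{i-1})=l_\sigma(2i-1)+r_\sigma(2i-1)$ and $h(\omega_i)=l_\sigma(2i)+r_\sigma(2i)$. These immediately convert the ``extremal'' events on the diagram into embracing-number events: $\xi_i'$ attains its maximal admissible value $h(\omega_i)$ exactly when $l_\sigma(2i)=0$, while $\xi_i$ attains its maximal value $h(\omega_{i-1})$ exactly when $l_\sigma(2i-1)=0$; dually, $\xi_i'=0$ means $r_\sigma(2i)=0$ and $\xi_i=0$ means $r_\sigma(2i-1)=0$. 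Thus everything reduces to characterizing left-to-right maxima and right-to-left minima by the vanishing of an embracing number.

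The combinatorial heart is the following pair of characterizations, valid for any word with distinct entries: a value $v$ at position $p$ is a left-to-right maximum if and only if $v$ is not a descent bottom and $l_\sigma(v)=0$, and dually $v$ is a right-to-left minimum if and only if $v$ is not a descent top and $r_\sigma(v)=0$. I would prove the first claim (the second being its mirror image under reversal of the word) as follows. If $v$ is a left-to-right maximum then nothing to its left exceeds $v$, so neither is its immediate predecessor larger (hence $v$ is no descent bottom) nor can a descent top to the left exceed $v$ (hence $l_\sigma(v)=0$). Conversely, suppose $v$ is not a left-to-right maximum and let $q<p$ be the rightmost position carrying a value larger than $v$. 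If $q=p-1$ then $\sigma_{p-1}>v$ makes $v$ a descent bottom; otherwise $\sigma_{q+1},\dots,\sigma_{p-1}$ are all smaller than $v$, so $(\sigma_q,\sigma_{q+1})$ is a descent with $\sigma_q>v>\sigma_{q+1}$ lying strictly left of $p$, i.e. a left-embracing of $v$, whence $l_\sigma(v)\ge 1$. So failure of the maximum property forces $v$ to be a descent bottom or to have $l_\sigma(v)\ge1$, which is the contrapositive.

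It then remains to feed in the even--odd descent structure. Since every descent of $\sigma\in\X_{2n}$ is even--odd, the descent bottoms are precisely the odd elements of $S_\sigma$ and the descent tops precisely the even elements of $S_\sigma$, and by \eqref{MP} the memberships of $2i-1$ and $2i$ in $S_\sigma$ are recorded exactly by the step type: $2i-1\in S_\sigma$ iff $s_i\in\{U,L_2\}$ and $2i\in S_\sigma$ iff $s_i\in\{D,L_2\}$. Running the two characterizations over the four cases (even or odd entry, maximum or minimum role) now yields the identities. An even value $2i$ is never a descent bottom, so it is a left-to-right maximum iff $\xi_i'=h(\omega_i)$, with no step restriction, giving $\lema$. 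An odd value $2i-1$ is a left-to-right maximum iff it is not a descent bottom, i.e. $s_i\in\{D,L_1\}$, and $\xi_i$ is maximal, giving $\loma$. Symmetrically $2i-1$ is never a descent top, so it is a right-to-left minimum iff $\xi_i=0$, giving $\romi$, while $2i$ is a right-to-left minimum iff $s_i\in\{U,L_1\}$ and $\xi_i'=0$, giving $\remi$. The step-type indicators are thus nothing but the bookkeeping of the ``not a descent endpoint'' clause, which is why the two statistics attached to entries that can never play the forbidden role carry no restriction.

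The main obstacle I anticipate is the converse half of the characterization lemma, where one must certify that a value failing to be a left-to-right maximum while not being a descent bottom is genuinely left-embraced by a descent pair lying strictly to its left; this is exactly where the rightmost-larger-predecessor argument and the split between the adjacent case $q=p-1$ and the non-adjacent case $q<p-1$ do the real work. A secondary point to keep straight is the index bookkeeping in the height dictionary: the first entry $2i-1$ is governed by the entering height $h(\omega_{i-1})$ whereas the second entry $2i$ is governed by the leaving height $h(\omega_i)$, and matching these against the admissibility ranges \eqref{case1}--\eqref{case3} for the three step types is what turns the otherwise routine four-case analysis into the stated closed forms.
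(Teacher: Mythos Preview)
Your proof is correct and is precisely the verification the paper declines to give (its entire proof is ``The trivial verification is omitted''). The two key ingredients you isolate --- the characterization of left-to-right maxima (resp.\ right-to-left minima) as values that are not descent bottoms (resp.\ tops) and have vanishing left (resp.\ right) embracing number, together with the height/embracing dictionary $h(\omega_{i-1})=l_\sigma(2i-1)+r_\sigma(2i-1)$ and $h(\omega_i)=l_\sigma(2i)+r_\sigma(2i)$ --- are exactly what makes the verification routine.

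One point worth flagging: your derivation actually yields slightly different formulas than those printed in the lemma. You obtain $\xi_i=h(\omega_{i-1})$ for the $\loma$ condition (not $h(\omega_i)$; these differ on $D$-steps), $\xi_i'=0$ for $\remi$ (the paper prints $\xi_i=0$), and no step-type restriction for $\romi$ (the paper restricts to $\{U,D,L_1\}$). These are typos in the lemma as stated; your versions are the correct ones, as one can confirm from the weight formula \eqref{weight:dh}--\eqref{general-weight:Mpath} used immediately afterwards in the proof of Theorem~\ref{master-cf} (e.g.\ the $L_2$ weight $[b,h+1]_{q,p}$ does carry the $b$ factor when $\xi_i=0$, so $\romi$ must include $L_2$). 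So your argument not only supplies the omitted proof but implicitly corrects the statement.
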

\begin{proof}
The trivial verification is omitted.
\end{proof}

\begin{proof}[Proof of Theorem~\ref{master-cf}]
For $\sigma\in \X_{2n}$, if 
 $\Phi(\sigma)=(\omega,(\xi,\xi'))\in \mathcal{PD}_n$, we derive from the above lemma 
\begin{align}\label{weight:path}
a^{\lema\,\sigma}\bar a^{\loma\,\sigma}
b^{\romi\,\sigma}{\bar b}^{\remi\,\sigma}
p^{\231\sigma }q^{\312\sigma }y^{\TB\, \sigma}t^{\des \sigma}
=\prod_{i=1}^n w(s_i, (\xi_i, \xi'))
\end{align}
with $h$ being the height of the step $s_i$,
\begin{align}\label{weight:dh}
w(s_i,  (\xi_i, \xi'_i))
=\begin{cases}
a^{\chi(\xi_i'=h+1)}b^{\chi(\xi_i=0)}\,
p^{\xi_i}q^{h-\xi_i}\cdot p^{\xi'_i}q^{h+1-\xi'_i} \cdot t&\; \textrm{if}\quad s_i=U;\\
a^{\chi(\xi_i'=h)}{\bar a}^{\chi(\xi_i=h)}b^{\chi(\xi_i=0)}
\,
p^{\xi_i}q^{h-\xi_i}\cdot p^{\xi'_i}q^{h-1-\xi'_i}&\; \textrm{if}\quad s_i=D;\\
a^{\chi(\xi_i'=h)}{\bar a}^{\chi(\xi_i=h)}b^{\chi(\xi'_i=0)}
\,p^{\xi_i}q^{h-\xi_i}\cdot p^{\xi'_i}q^{h-\xi'_i}&\; \textrm{if}\quad s_i=L_1;\\
a^{\chi(\xi_i'=h)}
\,p^{\xi_i}q^{h-\xi_i}\cdot p^{\xi'_i}q^{h-\xi'_i}\;ty&\; \textrm{if}\quad s_i=L_2.
\end{cases}
\end{align}

Therefore, the corresponding  polynomial and weights become
\begin{align*}
 X_n(a,\bar a,b, \bar b, p,q,y, t)&=\sum_{\sigma\in \mathcal{X}_{2n}}
a^{\lema\,\sigma}\bar a^{\loma\,\sigma}
b^{\romi\,\sigma} {\bar b}^{\remi\,\sigma} p^{\231\sigma }q^{\312\sigma }y^{\TB\, \sigma}t^{\des \sigma}\\
&=\sum_{\omega\in \mathcal{MP}_n}\sum_{(\xi_i, \xi_i')} w(s_i,  (\xi_i, \xi'_i))
\end{align*} 
with
\begin{align}\label{general-weight:Mpath}
\sum_{(\xi_i, \xi_i')} w(s_i,  (\xi_i, \xi'_i))
=\begin{cases}
[b,h+1]_{q,p}\cdot[a,h+2,\bar b]_{p,q}\cdot t&\; \textrm{if}\quad s_i=U;\\
[\bar a,h+2,b]_{p,q}\cdot [a,h+1]_{p,q}&\; \textrm{if}\quad s_i=D;\\
[\bar a,h+1, b]_{p,q}\cdot[a,h+1,\bar b]_{p,q}&\; \textrm{if}\quad s_i=L_1;\\
[a,h+1]_{p,q}\cdot[b,h+1]_{q,p}\;ty&\; \textrm{if}\quad s_i=L_2.
\end{cases}
\end{align}
with  $[x,1,y]_{p,q}=xy$ and 
$$
[x,n,y]_{p,q}=xp^{n-1}+\sum_{i=1}^{n-2}p^{n-1-i}q^i+yq^{n-1}.
$$
By Lemma \ref{flajolet} we derive the corresponding continued fraction for the generating function of $X_n(a,\bar a,b, \bar b, p,q,y, t)$.\end{proof}


\section{Continued fractions for  cycles 
in  D-permutations and E-permutations}
A \emph{surjective pistol} on $[2n]$ is the graph of a  surjective mapping $f: [2n]\to 2[n]$ such that 
$$
f(i)\geq i\quad \textrm{for}\quad i\in [2n].
$$
Let $\mathcal{P}_{2n}$ be the set of surjective pistols on $[2n]$. A surjective pistol is depicted in 
Figure~\ref{alterp}. 
\begin{figure}[t]
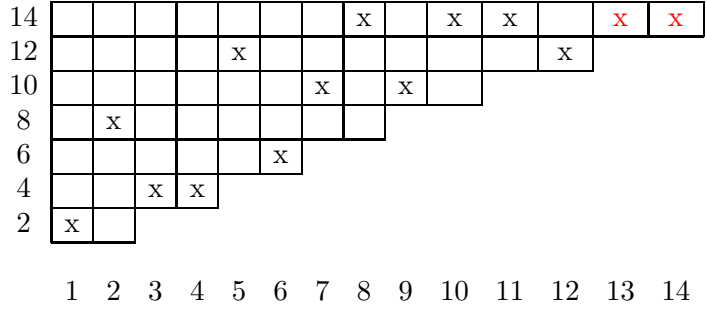
{\small
\begin{tabular}{ccccccccccccccc|}
\cline{2-15} 
14&\multicolumn{1}{|c|}{}&\multicolumn{1}{|c|}{}&\multicolumn{1}{|c|}{}&\multicolumn{1}{|c|}{}&\multicolumn{1}{|c|}{}&\multicolumn{1}{|c|}{}&\multicolumn{1}{|c|}{}&\multicolumn{1}{|c|}{x}&\multicolumn{1}{|c|}{}&\multicolumn{1}{|c|}{x}&\multicolumn{1}{|c|}{x}&\multicolumn{1}{|c|}{}&\multicolumn{1}{|c|}{\red{x}}&\multicolumn{1}{|c|}{\red{x}}\\
\cline{2-15} 
12&\multicolumn{1}{|c|}{}&\multicolumn{1}{|c|}{}&\multicolumn{1}{|c|}{}&\multicolumn{1}{|c|}{}&\multicolumn{1}{|c|}{x}&\multicolumn{1}{|c|}{}&\multicolumn{1}{|c|}{}&\multicolumn{1}{|c|}{}&\multicolumn{1}{|c|}{}&\multicolumn{1}{|c|}{}&\multicolumn{1}{|c|}{}&\multicolumn{1}{|c}{x}&\multicolumn{1}{|c}{}&\multicolumn{1}{c}{}\\
\cline{2-13} 
10&\multicolumn{1}{|c|}{}&\multicolumn{1}{|c|}{}&\multicolumn{1}{|c|}{}&\multicolumn{1}{|c|}{}&\multicolumn{1}{|c|}{}&\multicolumn{1}{|c|}{}&\multicolumn{1}{|c|}{x}&\multicolumn{1}{|c|}{}&\multicolumn{1}{|c|}{x}&\multicolumn{1}{|c|}{}&\multicolumn{1}{c}{}&\multicolumn{1}{c}{}&\multicolumn{1}{c}{}&\multicolumn{1}{c}{}\\
\cline{2-11} 
8&\multicolumn{1}{|c|}{}&\multicolumn{1}{|c|}{x}&\multicolumn{1}{|c|}{}&\multicolumn{1}{|c|}{}&\multicolumn{1}{|c|}{}&\multicolumn{1}{|c|}{}&\multicolumn{1}{|c|}{}&\multicolumn{1}{|c|}{}&\multicolumn{1}{c}{}&\multicolumn{1}{c}{}&\multicolumn{1}{c}{}&\multicolumn{1}{c}{}&\multicolumn{1}{c}{}&\multicolumn{1}{c}{}\\
\cline{2-9} 
6&\multicolumn{1}{|c|}{}&\multicolumn{1}{|c|}{}&\multicolumn{1}{|c|}{}&\multicolumn{1}{|c|}{}&\multicolumn{1}{|c|}{}&\multicolumn{1}{|c|}{x}&\multicolumn{1}{c}{}&\multicolumn{1}{c}{}&\multicolumn{1}{c}{}&\multicolumn{1}{c}{}&\multicolumn{1}{c}{}&\multicolumn{1}{c}{}&\multicolumn{1}{c}{}&\multicolumn{1}{c}{}\\
\cline{2-7} 
4&\multicolumn{1}{|c|}{}&\multicolumn{1}{|c|}{}&\multicolumn{1}{|c|}{x}&\multicolumn{1}{|c|}{x}&\multicolumn{1}{c}{}&\multicolumn{1}{c}{}&\multicolumn{1}{c}{}&\multicolumn{1}{c}{}&\multicolumn{1}{c}{}&\multicolumn{1}{c}{}&\multicolumn{1}{c}{}&\multicolumn{1}{c}{}&\multicolumn{1}{c}{}&\multicolumn{1}{c}{}\\
\cline{2-5} 
2&\multicolumn{1}{|c|}{x}&\multicolumn{1}{|c|}{}&\multicolumn{1}{c}{}&\multicolumn{1}{c}{}&\multicolumn{1}{c}{}&\multicolumn{1}{c}{}&\multicolumn{1}{c}{}&\multicolumn{1}{c}{}&\multicolumn{1}{c}{}&\multicolumn{1}{c}{}&\multicolumn{1}{c}{}&\multicolumn{1}{c}{}&\multicolumn{1}{c}{}&\multicolumn{1}{c}{}\\
\cline{2-3} \\
&\multicolumn{1}{c}{1}&\multicolumn{1}{c}{2}&\multicolumn{1}{c}{3}&\multicolumn{1}{c}{4}&\multicolumn{1}{c}{5}&\multicolumn{1}{c}{6}&\multicolumn{1}{c}{7}&\multicolumn{1}{c}{8}&\multicolumn{1}{c}{9}&\multicolumn{1}{c}{10}&\multicolumn{1}{c}{11}&\multicolumn{1}{c}{12}&\multicolumn{1}{c}{13}&\multicolumn{1}{c}{14}\\
\end{tabular}
\caption{\label{alterp}
 $f=2\,8\,4\,4\,12\,6\,10\,14\,10\,14\,14\,12\,14\,14\in \mathcal{P}_{14}$}
}
\end{figure}
A point $(k, f(k))$ of a pistol $f\in \mathcal{P}_{2n}$  is
\begin{itemize}
\item  {\sl maximum} if  $f(k)=2n$ and $k\leq 2n-2$, 
\item {\sl fixed} if  $f(k)=k$ and $k<2n$;
\item {\sl surfixed} if   $f(k)=k+1<2n$; 
\item {\sl doubled} if $\exists j\neq k$ such that  $f(j)=f(k)$.
\end{itemize}

A maximum $(k, f(k))$ is even (resp. odd) if $k$ is even  (resp. odd). The number of even (resp. odd) 
maxima of $f$ is denoted by $\mp(f)$ (resp. $\mi (f)$). 
The number of doubled (resp. isolated) fixed points is denoted 
by $\fd (f)$ (resp. $\fnd(f)$).  
The number of  doubled (resp. isolated) surfixed points  is denoted by  $\sd(f)$ (resp. $\snd (f)$). 
The six  statistics of the pistol  $f$ in Figure~\ref{alterp} are:
$$
\mi(f)=1,\;\mp(f)=2,\;\fd (f)=2,\;\fnd (f)=1,\;\sd (f)=2, \;\snd (f)=1.
$$
Let 
$$
\Gamma_n(\alpha, \beta, \gamma, \bar \alpha, \bar y, \bar z)=
\sum_{f\in \P_{2n}}\alpha^{\mi (f)}
\beta^{\fd (f)}\gamma^{\snd (f)}{\bar \alpha}^{\mp (f)}{\bar \beta}^{\fnd (f)}{\bar
\gamma}^{\sd (f)}.
$$
Dumont~\cite{Du92} conjectured, and  Randrianarivony~\cite{Ra94} and Zeng~\cite{Ze96} proved the following result.
\begin{lem}[Randrianarivony-Zeng]\label{RZ}  We have 
\begin{align}
\sum_{n\geq 0} \Gamma_{n+1}(\alpha, \bar \alpha)x^n
=\cfrac{1}{1-(\alpha\bar \beta+\beta\bar \gamma+\gamma\bar \alpha)x-\cfrac{(\bar \alpha+\beta)(\bar \beta+z)(\bar \gamma+\alpha)x^2}{\ddots}}
\end{align}
where the coefficients under the (n+1)-th row of fraction is 
\begin{multline}
1-[(\alpha+n)(\bar \beta+n)+(\beta+n)(\bar \gamma+n)+(\gamma+n)(\bar \alpha+n) -n(n+1)]x\\
-\cfrac{(n+1)(\bar \alpha+\beta+n)(\bar \beta+\gamma+n)(\bar \gamma+\alpha+n)x^2}{\ddots}
\end{multline}
\end{lem}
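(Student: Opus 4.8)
The plan is to prove the J-fraction by encoding each surjective pistol as a labelled Motzkin path and then invoking Flajolet's master theorem (Lemma~\ref{flajolet}), in the same spirit as the proof of Theorem~\ref{master-cf}. Concretely, I would read a pistol $f\in\P_{2n}$ level by level, processing the values $2,4,\dots,2n$ in increasing order; at the moment the value $2k$ is introduced I would record how the relevant columns are disposed of, together with a label encoding the particular choice made among the options currently available. The crucial bookkeeping parameter is the height $h$ of the path, which counts the number of levels already created that still expect further preimages (the ``open'' levels): an up-step opens a new level, a down-step closes one, and a level step keeps the count fixed. Since Flajolet's lemma expresses $\sum_n(\cdots)x^n$ as a sum over Motzkin paths of length $n$, the index shift in $\sum_{n\geq0}\Gamma_{n+1}x^n$ reflects that paths of length $n$ should enumerate the pistols in $\P_{2n+2}$.

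Next I would match the point-types of $f$ to step-types carrying weights. A maximum contributes the distinguished labels $\bar\alpha,\alpha$ according to parity (tracking $\mp(f),\mi(f)$), a fixed point the labels $\beta,\bar\beta$ according to whether it is doubled or isolated (tracking $\fd(f),\fnd(f)$), and a surfixed point the labels $\gamma,\bar\gamma$ (tracking $\sd(f),\snd(f)$); every remaining column is a generic choice that, at height $h$, may be placed in one of $h$ intermediate slots. Summing the weight over all admissible labels at a fixed height $h$ should collapse each step weight into a linear polynomial in $h$: the distinguished choice contributes one of $\alpha,\beta,\gamma$ (or a barred partner) while the $h$ generic slots contribute the additive $+h$. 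The key structural feature I expect is that the three families of special points interleave cyclically, so that the statistics pair up as $(\alpha,\bar\beta)$, $(\beta,\bar\gamma)$, $(\gamma,\bar\alpha)$; this $3$-periodicity is exactly what will produce the three summands of $b_h$ and the triple product in $\lambda_{h+1}$.

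Having identified the step weights, I would read off $\lambda_{h+1}=a_hc_{h+1}$ as the product of an up-step weight at height $h$ and a down-step weight at height $h+1$, and $b_h$ as the total weight of level steps at height $h$; Flajolet's lemma (Lemma~\ref{flajolet}) then delivers the asserted J-fraction directly. As a consistency check, the specialisation $h=0$ gives $b_0=\alpha\bar\beta+\beta\bar\gamma+\gamma\bar\alpha$ and $\lambda_1=(\bar\alpha+\beta)(\bar\beta+\gamma)(\bar\gamma+\alpha)$, matching the stated first-level coefficients; and setting all six variables to $1$ should recover, after applying the contraction formula of Lemma~\ref{contra-formula}, the Genocchi S-fraction \eqref{v-genocchi}.

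The hard part will be pinning the encoding down precisely enough that the six statistics factor as a genuine product over the steps of the path, and in particular producing the nonlinear weights. The level-step weight $b_h=(\alpha+h)(\bar\beta+h)+(\beta+h)(\bar\gamma+h)+(\gamma+h)(\bar\alpha+h)-h(h+1)$ is \emph{not} the weight of a single level step but the sum of three ``coloured'' level steps corrected by an inclusion--exclusion term $-h(h+1)$ that removes the configurations counted twice; verifying that precisely $h(h+1)$ configurations are over-counted, and that the cubic $\lambda_{h+1}=(h+1)(\bar\alpha+\beta+h)(\bar\beta+\gamma+h)(\bar\gamma+\alpha+h)$ genuinely arises from three independent up/down choices, is the delicate combinatorial heart of the matter. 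This is exactly the statement established by Dumont's conjecture \cite{Du92} and the proofs of Randrianarivony \cite{Ra94} and Zeng \cite{Ze96}, so within the present paper one may legitimately invoke their result as a black box rather than re-derive the bijection.
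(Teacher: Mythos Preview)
The paper does not prove this lemma at all: it is quoted verbatim as a known result of Randrianarivony~\cite{Ra94} and Zeng~\cite{Ze96} (settling a conjecture of Dumont~\cite{Du92}) and used as a black box in the proof of Theorem~\ref{LW-conj}. So there is no ``paper's own proof'' to compare against, and your final sentence---that one may legitimately invoke their result rather than re-derive the bijection---is exactly what the paper does.

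That said, your sketch is a faithful outline of how the cited proofs actually go: Zeng's argument in \cite{Ze96} encodes pistols as weighted Motzkin paths with height tracking the number of ``open'' levels, and the three-way cyclic pairing $(\alpha,\bar\beta),(\beta,\bar\gamma),(\gamma,\bar\alpha)$ you anticipate is precisely the mechanism that produces the stated $b_h$ and $\lambda_{h+1}$. Your identification of the subtraction term $-h(h+1)$ as an inclusion--exclusion correction is on the right track, though in the original proofs it emerges more directly from the way the two columns $2k-1,2k$ are processed together. If you wanted to make this a self-contained proof you would need to spell out the bijection carefully, but for the purposes of the present paper the citation suffices.
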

Lazar-Wachs~\cite[Lemma 5.2]{LW19} proved the following result.
\begin{lem}[Lazar-Wachs]\label{lem:LW}
There is a bijection 
$$
\phi: \D_{2n}\mapsto \{f\in \P_{2n+2}: \textrm{$f$ has no even maximal}\}
$$
such that for all $\sigma\in \D_{2n}$ and $j\in [2n]$, the following properties hold:
\begin{enumerate}
\item $j$ is an even cycle maximum of $\sigma$ iff it is a fixed point of $\phi(\sigma)$,
\item $j$ is an even fixed point of $\sigma$ iff it is an isolated fixed point of $\phi(\sigma)$,
\item $j$ is an odd fixed point of $\sigma$ iff it is an odd maximum 
of $\phi(\sigma)$.
\end{enumerate}
\end{lem}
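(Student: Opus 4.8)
The plan is to build $\phi$ explicitly, transporting the cycle structure of a D-permutation onto the functional graph of a surjective pistol so that cycle maxima become the diagonal (fixed points) of the pistol; the three properties should then be read off locally, and bijectivity proved by exhibiting an inverse.

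First I would record the rigidity forced by the defining inequalities $\sigma(i)\ge i$ for odd $i$ and $\sigma(i)\le i$ for even $i$. Inside any cycle the odd entries are weak excedances and the even entries weak anti-excedances; hence the maximum $M$ of a cycle satisfies $\sigma(M)\le M$, so $M$ is even unless the cycle is a single odd fixed point, and dually the minimum of a nontrivial cycle is odd. Consequently every odd cycle maximum is an odd fixed point, every nontrivial cycle and every even fixed point carries an even maximum, and the even entries of a cycle sit in a chain below that maximum. These observations make the target properties natural: cycle maxima will occupy the diagonal of $f$; a one-element even cycle (an even fixed point of $\sigma$) should give a fixed point with no further fibre, hence isolated; and an odd fixed point, having no even ceiling inside its trivial cycle, should be pushed to the global top $2n+2$.

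Next I would define $\phi(\sigma)=f$ on $[2n+2]$ by letting each element point upward along a canonical traversal of its cycle: set $f(i)=i$ when $i$ is an even cycle maximum, $f(i)=2n+2$ when $i$ is an odd fixed point, and $f(2n+1)=f(2n+2)=2n+2$; for every other $i$ let $f(i)$ be the even checkpoint its traversal reaches, chosen so that $i\le f(i)\le 2n$. Granting such a traversal, I would verify that $f$ is a surjective pistol in the target: within each cycle every even value is attained (the even entries climb the chain to the maximum, while the smallest even value is supplied by the odd minimum), and $2n+2$ is attained by $2n+1,2n+2$; since every even $k\le 2n$ receives a value $\le 2n<2n+2$, the pistol has no even maximum. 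Properties (1)--(3) then follow from the local analysis: $f(k)=k$ exactly on even cycle maxima; such a fixed point has a one-element fibre (is isolated) exactly when the cycle it represents is the singleton $\{k\}$, i.e.\ an even fixed point of $\sigma$; and $f(k)=2n+2$ with $k\le 2n$ odd exactly on odd fixed points.

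The hard part is bijectivity, and I expect it to concentrate in the reconstruction of the cyclic order. Because $f(i)\ge i$, iterating $f$ drives every element to a fixed point, so the fibres of this flow partition $[2n]$ into prospective cycle-supports (one per even fixed point of $f$, with the odd fixed points collected in the basin of $2n+2$); the even fixed points with singleton basin recover the even fixed points of $\sigma$. The obstacle is that a support does \emph{not} determine a D-cycle: for instance the three distinct D-cycles $(1,3,5,6,4,2)$, $(1,5,6,3,4,2)$ and $(1,4,3,5,6,2)$ share the support $\{1,\dots,6\}$ and the same maximum $6$, so the naive rule sending each entry to the least even of its support exceeding it collapses all three to the same $f$ and is three-to-one. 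Thus the traversal defining $f$ must encode the cyclic order faithfully, and the crux of the proof is to show that the finer fibre data of $f$ -- beyond the fixed-point/maximum skeleton, which only governs the statistics in (1)--(3) -- decodes to a unique compatible D-cycle on each support. Once that reconstruction is established the two maps are mutually inverse, and the statistic-matching verified earlier completes the proof.
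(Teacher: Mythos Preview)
The paper does not prove this lemma at all: it is quoted verbatim as \cite[Lemma~5.2]{LW19} and then immediately used in the proof of Theorem~\ref{LW-conj}. So there is no ``paper's own proof'' to compare against; any assessment has to be of your argument on its own terms.

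On those terms, there is a genuine gap. Your proposal correctly isolates the structural facts about D-permutations (nontrivial cycle maxima are even, minima are odd, odd cycle maxima are odd fixed points) and correctly predicts where the statistics should land: even cycle maxima on the diagonal, odd fixed points sent to $2n+2$, and $f(2n+1)=f(2n+2)=2n+2$. But the definition of $\phi$ on the remaining elements is never given. The clause ``let $f(i)$ be the even checkpoint its traversal reaches, chosen so that $i\le f(i)\le 2n$'' is a placeholder, not a specification of which even value to take; and you yourself demonstrate with the three D-cycles on $\{1,\ldots,6\}$ that the most obvious candidate (send $i$ to the least even element of its cycle exceeding it) is not injective. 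You then conclude that ``the traversal defining $f$ must encode the cyclic order faithfully'' and that ``the crux of the proof is to show that the finer fibre data of $f$ \ldots\ decodes to a unique compatible D-cycle,'' but you neither name the traversal nor carry out the decoding. That is precisely the content of the lemma: the existence of a specific rule making $\phi$ a bijection. Verifying properties (1)--(3) is routine once $\phi$ is pinned down, so without that rule the proposal is an outline of what a proof must accomplish rather than a proof.

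If you want to complete the argument, you will need to commit to an explicit assignment $i\mapsto f(i)$ on the non-maximum entries of each cycle (in Lazar--Wachs this is done via an intermediate combinatorial object) and then exhibit the inverse, showing in particular how the multiset $f^{-1}(2k)$ for each even $2k$ recovers the cyclic order and not merely the support.
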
\label{lem-xy-cycle}

\begin{proof}[Proof of Theorem~\ref{LW-conj}]
By Lemma~\ref{lem:LW} we obtain immediately 
$$
\sum_{\sigma \in \D_{2n}}x_0^{\f_{\rm e}(\sigma)}x_1^{\f_{\rm o}(\sigma)}z^{\cyc(\sigma)}=
\Gamma_{n+1}(x_1z,z,1,0,x_0z, 1)\quad \textrm{for $n\geq 1$},
$$ 
where $\f_{\rm e}(\sigma)$ (resp. $\f_{\rm o}(\sigma)$)
is  the number of 
even (resp. odd) fixed points of $\sigma$.

It follows from   Lemma~\ref{RZ} that
\begin{subequations}
\begin{align}\label{cf-xy-cycle}
1+\sum_{n\geq 1}\sum_{\sigma \in \D_{2n}}x_0^{\f_{\rm e}(\sigma)}x_1^{\f_{\rm o}(\sigma)}z^{\cyc(\sigma)} x^n 
=\frac{1}{1-(x_0x_1z^2+z)x-\cfrac{z(x_0z+1)(x_1z+1)x^2}{\ddots}} 
\end{align}
where the coefficients under the (n+1)-th row of fraction is 
\begin{multline}
1-[(x_1z+n)(x_0z+n)+(z+n)(1+n)]x\\
-\cfrac{(n+1)(z+n)(x_0z+1+n)(x_1z+1+n)x^2}{\ddots}
\end{multline}
\end{subequations}
Comparing Corollary~\ref{cf-e-cyc} with $t=1$ and \eqref{cf-xy-cycle} with  $x_0=x_1=1$ we see that 
$\sum_{\sigma \in \E_{2n}}z^{\cyc(\sigma)}  =\sum_{\sigma \in \D_{2n}}z^{\cyc(\sigma)}.$
\end{proof}
\medskip

Note that  setting $x_0=x_1=0$ in \eqref{cf-xy-cycle} yields the S-fraction
\begin{align}
1+\sum_{n\geq 1}\sum_{\sigma \in \D^*_{2n}}z^{\cyc(\sigma)} t^n =\cfrac{1}{1-
\cfrac{1\cdot z\, t}{1-
\cfrac{1^2\, t}{1-\cfrac{2\cdot (z+1)\, t}{1-\cfrac{2^2\, t}{\cdots}}}}}
\end{align}
where $D_{2n}^*$ is the set of  derangements in $\D_{2n}$.

\section{Gamma-decomposition and group actions}

We  give another proof of \eqref{formula:gamma} using \emph{Inter-hopping} action on $\X_{2n}$,  see \cite{EFLL21, BJS10}. 
For  $\sigma\in\X_{2n}$ and  $j\in [n]$, a  doubleton $\{2j-1,2j\}$ is called \emph{free} (in $\sigma$) if both $2j-1$ and $2j$ are in $S_{\sigma}$, or neither  $2j-1$ nor  $2j$ is in $S_{\sigma}$, namely
$(2j-1,2j)\in (S_\sigma\times S_\sigma)\cup (\overline{S}_\sigma\times \overline{S}_\sigma)$.

Let  $w=w_1\cdots w_{2n}\in\X_{2n}$ with a free pair $\{w_i,w_j\}=\{2r-1,2r\}$ for some $i<j$. We define 
the action $\phi_r$ on $w$ such that  $|\des w-\des\phi_r(w)|=1$.
\vskip 0.3cm
{\bf Inter-hopping action\, $\phi_r$}
\begin{itemize}
\item[(A1)]
$(2j-1,2j)\in  \overline{S}_\sigma\times \overline{S}_\sigma$. If the elements $2r-1$ and $2r$ are adjacent in $w$ then $\phi_r(w)$ is obtained by switching $2r-1$ and $2r$. Otherwise, we factorize $w$ as
$$
w=\cdots\beta_0w_i\alpha_1\beta_1\alpha_2\beta_2\cdots\alpha_d\beta_dw_j\alpha_{d+1}\cdots,
$$
where $\alpha_j$ (resp. $\beta_j$) is a maximal sequence of consecutive entries greater than $2r$ (resp. less than $2r-1$). Note that 
neither $\alpha_j$ nor  $\beta_j$ is empty for $1\leq j\leq d$, but
 $\beta_0$ and $\alpha_{d+1}$ are possibly empty.
\begin{itemize}
\item 
If $w_i=2r$ and $w_j=2r-1$, then 
$$
\phi_r(w):=\cdots\beta_0\alpha_1(2r-1)\alpha_2\beta_1\alpha_3\beta_2\cdots\alpha_d\beta_{d-1}(2r)\beta_d\alpha_{d+1}\cdots.
$$
\item
If $w_i=2r-1$ and $w_j=2r$, then 
$$
\phi_r(w):=\cdots\beta_0(2r)\beta_1\alpha_1\beta_2\alpha_2\cdots\beta_d\alpha_d(2r-1)\alpha_{d+1}\cdots.
$$
\end{itemize}
\item[(A2)]
$(2j-1,2j)\in S_\sigma\times S_\sigma$. Then $\phi_r(w)$ is obtained by reversing the process of (A1).
\end{itemize}
For $\sigma\in\X_{2n}$, let $\Orb(\sigma)=\{g(\sigma):g\in\Z_2^n\}$ be the orbit of $\sigma$ under the  Inter-hopping action $\phi$. Let $\tilde{\sigma}$ be the unique $\mathcal{E}$-permutation in $\Orb(\sigma)$. We have the following lemma.
\begin{lem}\label{kl2}
For any $\tilde{\sigma}\in\bar{\X}_{2n}$, we have
\begin{equation}\label{eq-orbite}
\sum_{\sigma\in \Orb(\tilde{\sigma})}p^{\312\sigma}q^{\231\sigma}t^{\des\sigma}y^{\TB(\sigma)}=p^{\312\tilde{\sigma}}q^{\231\tilde{\sigma}}t^{\des\tilde{\sigma}}(1+yt)^{n-2\des\tilde{\sigma}}.
\end{equation}
\end{lem}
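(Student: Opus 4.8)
The plan is to realize $\Orb(\tilde\sigma)$ as a set parametrized by subsets of the \emph{free pairs} of $\tilde\sigma$, on which $\312$ and $\231$ are constant while $\des$ and $\TB$ record exactly how many free pairs have been turned into dominoes. The basic dictionary I would set up first: in $\X_{2n}$ every descent is even-odd, so a descent top is even, a descent bottom is odd, and distinct descents have distinct tops and distinct bottoms. Hence $\des\sigma$ equals the number of even elements of $S_\sigma$, and a doubleton $\{2r-1,2r\}$ is a domino precisely when both $2r-1,2r\in S_\sigma$. In particular a free pair is either a domino (both in $S_\sigma$) or disjoint from $S_\sigma$, and $\tilde\sigma$ being an $\mathcal{E}$-permutation means exactly that none of its free pairs is a domino, so $\TB\tilde\sigma=0$.

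Next I would record the structural effect of one $\phi_r$ on a free pair $\{2r-1,2r\}$. Writing $w$ in the block form $\cdots\beta_0 w_i\alpha_1\beta_1\cdots\alpha_d\beta_d w_j\alpha_{d+1}\cdots$ of (A1), the blocks $\alpha_j$ (entries $>2r$) and $\beta_j$ (entries $<2r-1$) survive verbatim as subwords, so all descents internal to a block are untouched; a short boundary check then shows that every last entry of an $\alpha_j$ stays a descent top and every first entry of a $\beta_j$ stays a descent bottom — they are merely re-matched to new partners — while $2r$ becomes a new descent top and $2r-1$ a new descent bottom. Consequently $\phi_r$ alters $S_\sigma$ only by inserting (or, in (A2), deleting) $\{2r-1,2r\}$, so $\des$ and $\TB$ each change by exactly $1$ and the free/domino status of every other doubleton is left intact; this also makes clear that $\phi_r$ is an involution and that distinct $\phi_r,\phi_{r'}$ commute. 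The one genuinely delicate point — and the main obstacle — is that $\phi_r$ preserves the embracing statistics $\312$ and $\231$; this is the analogue of the crossing/nesting invariance in the Foata–Strehl and Brändén valley-hopping arguments, and I would prove it by tracking the left- and right-embracing numbers $l_\sigma,r_\sigma$ of each entry through the re-interleaving of the $\alpha$- and $\beta$-blocks (cf. \cite{EFLL21, BJS10}).

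Granting these properties, the orbit $\Orb(\tilde\sigma)$ is obtained from $\tilde\sigma$ by independently toggling each free pair between its non-domino and domino forms, hence is in bijection with the subsets $T$ of the set $F$ of free pairs of $\tilde\sigma$; on this orbit $\312,\231$ are constant and equal to their values at $\tilde\sigma$, while $\des\sigma=\des\tilde\sigma+|T|$ and $\TB\sigma=|T|$. It then remains only to count $F$: if $\des\tilde\sigma=k$, then $S_{\tilde\sigma}$ consists of $k$ even descent tops and $k$ odd descent bottoms lying in $2k$ distinct doubletons (no doubleton meets $S_{\tilde\sigma}$ in two elements, as that would be a domino), so exactly $n-2k$ doubletons are disjoint from $S_{\tilde\sigma}$, i.e. $|F|=n-2\des\tilde\sigma$.

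Finally I would assemble the sum:
\[
\sum_{\sigma\in\Orb(\tilde\sigma)}p^{\312\sigma}q^{\231\sigma}t^{\des\sigma}y^{\TB(\sigma)}
=p^{\312\tilde\sigma}q^{\231\tilde\sigma}t^{\des\tilde\sigma}\sum_{T\subseteq F}(ty)^{|T|}
=p^{\312\tilde\sigma}q^{\231\tilde\sigma}t^{\des\tilde\sigma}(1+ty)^{|F|},
\]
and substituting $|F|=n-2\des\tilde\sigma$ gives precisely \eqref{eq-orbite}.
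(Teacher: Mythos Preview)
Your proposal is correct and follows essentially the same route as the paper: prove that each inter-hop $\phi_r$ preserves $\312$ and $\231$ via a block-by-block analysis of the $\alpha_i,\beta_j$ factorization, observe that the orbit is then parametrized by the subsets of the free doubletons of $\tilde\sigma$ (on which $\des$ and $\TB$ each increase by the subset's cardinality), count those free doubletons as $n-2\des\tilde\sigma$, and sum. The paper's own proof fills in the one step you defer, namely the $\312/\231$ invariance, by the case analysis (i)--(iii) tracking how the boundary descents $(\L(\alpha_j),\F(\beta_j))$ get re-paired, which is exactly the ``tracking embracing numbers through the re-interleaving'' you indicate.
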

\begin{proof}
For $w\in\X_{2n}$ with a free pair $\{2r-1,2r\}$, if $2r-1$ and $2r$ are adjacent in $w$, then obviously, 
$$
\231\phi_r(w)=\231w,\quad \312\phi_r(w)=\312w.
$$
Otherwise, we factorize $w$ as 
$$
w=\cdots\beta_0w_i\alpha_1\beta_1\alpha_2\beta_2\cdots\alpha_d\beta_dw_j\alpha_{d+1}\cdots.
$$
We assume that $w_i=2r$ and $w_j=2r-1$ with $i<j$ (the  $i>j$ case  is similarly). Then
$$
\phi_r(w):=\cdots\beta_0\alpha_1(2r-1)\alpha_2\beta_1\alpha_3\beta_2\cdots\alpha_d\beta_{d-1}(2r)\beta_d\alpha_{d+1}\cdots.
$$
We see that the Inter-hopping action $\phi_r$ does not change the relative orders of $\alpha_i$ and $\beta_j$ respectively, so
\begin{itemize}
\item [(i)]
if there is a descent pair in $\alpha_i$ ($\beta_j$, respectively), 
then this descent pair can only form $\231$ or $\312$ patterns with a letter in some $\alpha_k$ ($\beta_l$, respectively);
\item [(ii)]
in $w$, for $a\in\alpha_i$,  $a$ forms a $\231$ pattern with some descent pair say $(\L(\alpha_j),\F(\beta_j))$, if and only if  in $\phi_r(w)$, $a$ forms a $\231$ pattern either with descent $$(\L(\alpha_j),\F(\beta_{j-1}))\quad {\rm  or}\quad (\L(\alpha_j),\F(\beta_j)).
$$
 For $b\in\beta_i$,  $b$ formed a $\231$ pattern with some descent pairs $(\L(\alpha_j),\F(\beta_j))$ if and only if $b$ forms a $\231$ pattern either with descent pair $(\L(\alpha_{j-1}),\F(\beta_j))$ or with descent pair $(\L(\alpha_{j}),\F(\beta_j))$;
\item[(iii)]
the number of $\231$ patterns formed by $2r-1$ and $2r$ with some descent pairs  in $w$ is  equal to the number of $\231$ patterns formed
 by $2r-1$ and $2r$ with some descent pairs  in $\phi_r(w)$.
\end{itemize}
Combining (i), (ii) and (iii), we obtain $\231w=\231\phi_r(w)$.
Similarly, we have $\312w=\312\phi_r(w)$.
Setting each free pair of $w$ not in $S_w$ yields the unique 
$\mathcal{E}$-permutation,  say $\tilde{\sigma}$,  in $\Orb(\sigma)$. Moreover, if $(2j, 2j-2k-1)$ is a descent pair, then the doubletons 
$\{2j, 2j-1\}$ and  $\{2j-2k, 2j-2k-1\}$ are not free. So,
the number of  free doubletons in $\tilde{\sigma}$ is
$n-2\des(\tilde{\sigma})$, which implies
 the identity \eqref{eq-orbite}.
\end{proof}
For convenience, we call an element \emph{changed its type}, if it changed from an even left-to-right maxima (resp. odd right-to-left minima) to a non even left-to-right maxima (resp. a non odd right-to-left minima), or from a non even left-to-right maxima to an even left-to-right maxima (resp. an odd right-to-left minima).
\vskip 0.3cm
$\mathbf{Involution\, \theta_r}$
\vskip 0.2cm
For $\sigma\in\X_{2n}$, and $\{2r-1,2r\}$ is free 
with $(2r-1,2r)\in  \overline{S}_\sigma\times \overline{S}_\sigma$. Then $\theta_r(\sigma)=\tau$, where $\tau\in\X_{2n}$ is obtained by exchanging $2r-1$ and $2r$ in $\sigma$.
\begin{lem}\label{kl1}
 For $w\in\X_{2n}$, if $\{2r-1,2r\}$ is free 
with $(2r-1,2r)\in  \overline{S}_\sigma\times \overline{S}_\sigma$, then 
 \begin{align}\label{N1}
 \lema\,\theta_r(w)&=\lema\,\phi_r(w)\\\label{N2}
 \romi\,\theta_r(w)&=\romi\,\phi_r(w),
 \end{align}
 and 
 \begin{align}\label{K1}
 \lema (\phi_r\circ\theta_r)(w)&=\lema\, w\\\label{K2}
 \romi (\phi_r\circ\theta_r)(w)&=\romi\, w.
 \end{align}
 If $\{2r-1,2r\}$ is a free pair with 
 $(2r-1,2r)\in S_\sigma\times S_\sigma$, then 
 \begin{align}\label{M1}
 \lema\,(\theta_r\circ\phi_r)(w)=\lema\, w\\\label{M2}
 \romi\,(\theta_r\circ\phi_r)(w)=\romi\, w.
 \end{align}
\end{lem}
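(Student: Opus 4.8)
The plan is to reduce all six identities to the single pair \eqref{N1}--\eqref{N2}, and then to prove these two by a direct comparison of the even left-to-right maxima (resp.\ odd right-to-left minima) produced by $\theta_r$ and by $\phi_r$. First I would record two structural facts about the maps. Since $2r$ is even it can never be a descent bottom, and since $2r-1$ is odd it can never be a descent top; hence exchanging them in place creates no new descent, so $\theta_r$ sends the set of permutations with $(2r-1,2r)\in\overline{S}_\sigma\times\overline{S}_\sigma$ to itself and is an involution there. By construction (rules (A1)--(A2)), $\phi_r$ is an involution that interchanges the two cases $(2r-1,2r)\in\overline{S}_\sigma\times\overline{S}_\sigma$ and $(2r-1,2r)\in S_\sigma\times S_\sigma$. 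Granting \eqref{N1}--\eqref{N2}, the rest is formal: applying \eqref{N1} to $\theta_r(w)$ (still of $\overline{S}\times\overline{S}$ type) and using $\theta_r^2=\mathrm{id}$ gives $\lema\,w=\lema(\phi_r\circ\theta_r)(w)$, which is \eqref{K1}; applying \eqref{N1} to $v:=\phi_r(w)$ (of $\overline{S}\times\overline{S}$ type when $w$ is of $S\times S$ type) and using $\phi_r^2=\mathrm{id}$ gives $\lema\,w=\lema\,\phi_r(v)=\lema\,\theta_r(v)=\lema(\theta_r\circ\phi_r)(w)$, which is \eqref{M1}. The identities \eqref{K2} and \eqref{M2} follow the same way from \eqref{N2}.

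For \eqref{N1}--\eqref{N2} I would first dispose of the case in which $2r-1$ and $2r$ are adjacent: there $\phi_r$ and $\theta_r$ both simply transpose the two entries, so they coincide and there is nothing to prove. In the non-adjacent case I use the factorization $w=\cdots\beta_0\,w_i\,\alpha_1\beta_1\cdots\alpha_d\beta_d\,w_j\,\alpha_{d+1}\cdots$ from (A1), where the $\alpha_k$ are the maximal runs of entries $>2r$ and the $\beta_k$ the maximal runs of entries $<2r-1$. The crucial preliminary observation is that both $\theta_r(w)$ and $\phi_r(w)$ are obtained from $w$ by permuting only the entries in positions $i,\dots,j$, while preserving the multiset of values there and the relative order of the $\alpha$-entries among themselves and of the $\beta$-entries among themselves. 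Consequently every position before $i$ and every position after $j$ has exactly the same left-to-right maximum (resp.\ right-to-left minimum) status in $w$, in $\theta_r(w)$ and in $\phi_r(w)$, so \eqref{N1} reduces to comparing the even left-to-right maxima lying in the block $i,\dots,j$.

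The heart of the argument is the comparison inside this block. Let $m$ be the maximum of the entries strictly to the left of position $i$. Because every $\beta$-entry and both of $2r-1,2r$ are $\le 2r$ while every $\alpha$-entry is $>2r$, the even left-to-right maxima inside the block are exactly the even $\alpha$-entries that beat $m$ and all earlier $\alpha$-entries, together with $2r$ whenever $2r$ is a left-to-right maximum. The first type is governed solely by $m$ and by the common left-to-right order $\alpha_1,\dots,\alpha_d$, so it contributes identically to $\theta_r(w)$ and to $\phi_r(w)$. For $2r$, I note that it is a left-to-right maximum precisely when it is placed at the front of the block and $m<2r$; checking the two sub-cases $w_i=2r$ and $w_i=2r-1$ shows that $\theta_r$ and $\phi_r$ place $2r$ in the same role (buried behind an $\alpha$-block in the first sub-case, at the front of the block in the second), while $2r-1$ is odd and never counts towards $\lema$. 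Hence $2r$ makes equal contributions and \eqref{N1} follows. The identity \eqref{N2} follows by the mirror-image argument, with $\alpha$ and $\beta$, with $2r$ and $2r-1$, with maxima and minima, and with ``front'' and ``back'' interchanged (equivalently, by conjugating with the reverse-complement $w\mapsto\overline{w}$, which preserves $\X_{2n}$ and swaps $\lema$ with $\romi$).

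The main obstacle will be the bookkeeping in this last step: one must verify carefully that the two maps really do leave the relative orders of the $\alpha$'s and of the $\beta$'s untouched and assign $2r$ and $2r-1$ to matching front/buried (resp.\ back/buried) positions, and one must handle the boundary situations ($\beta_0$ or $\alpha_{d+1}$ empty, $d$ small, and $m$ lying above or below $2r$) so that no stray left-to-right maximum or right-to-left minimum is miscounted. Everything else is formal.
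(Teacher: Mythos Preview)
Your proposal is correct and follows essentially the same route as the paper: both arguments use the factorization from (A1), observe that even left-to-right maxima (resp.\ odd right-to-left minima) inside the block can only come from the $\alpha$-runs (resp.\ $\beta$-runs) or from $2r$ (resp.\ $2r-1$), and then compare $\theta_r(w)$ with $\phi_r(w)$ directly. Your reduction of \eqref{K1}--\eqref{M2} to \eqref{N1}--\eqref{N2} via the involutivity of $\theta_r$ and $\phi_r$ is a slight sharpening of the paper's ``by the same reason'', but the core comparison is identical.
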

\begin{proof}
We take the factorization of $w$ as in (A1), i.e., 
\begin{align}\label{J1}
w=\cdots\beta_0w_i\alpha_1\beta_1\alpha_2\beta_2\cdots\alpha_d\beta_d w_j\alpha_{d+1}\cdots,
\end{align}
 here we assume $w_i=2r-1$, $w_j=2r$.
Then,
\begin{align}\label{L1}
\phi_r(w)&=\cdots\beta_0(2r)\beta_1\alpha_1\beta_2\alpha_2\cdots\beta_d\alpha_d(2r-1)\alpha_{d+1}\cdots\\\label{L2}
\theta_r(w)&=\cdots\beta_0(2r)\alpha_1\beta_1\alpha_2\beta_2\cdots\alpha_d\beta_d(2r-1)\alpha_{d+1}\cdots.
\end{align}
Since all the letters in $\beta_i$ are smaller than $2r-1$ and all the letters in $\alpha_j$ are larger than $2r$. So, the even left-to-right maxima element, if exists, it must be in one of $\alpha_i$ or the element before $\beta_0$, or it is $2r$.
And the odd right-to-left minima element, if exists, it must be in one of $\beta_j$ or the element after $\alpha_{d+1}$ or it is $2r-1$.
Compare~\eqref{L1} with \eqref{J1} and~\eqref{L2} with \eqref{J1}, we can see that under the actions $\theta_r$ and $\phi_r$, if some elements changed their type, they must in $\{2r-1, 2r\}$. And compare~\eqref{L1} and~\eqref{L2}, we have~\eqref{N1} and~\eqref{N2}. By the same reason, we can derive~\eqref{K1},\eqref{K2},\eqref{M1} and \eqref{M2}.
\end{proof}
Note that for $\sigma\in\X_{2n}$, neither $\theta_r$ and $\phi_r$ changes the number of $\312$ or $\231$ patterns when them action on $\sigma$. Then combine Lemma~\ref{kl2} and Lemma~\ref{kl1}, we can finally derive Theorem~\ref{thm:gamma}.
\section{Factorization of $\gamma$-coefficients and group actions}
We  prove Theorem~\ref{thm:main-pq}.  Clearly 
 it suffices to prove 
\begin{align}\label{eq:gamma-h}
\sum_{\sigma\in\overline{\X}_{2n,k}}p^{\231\sigma}q^{\312\sigma}=(p+q)^{2k}\sum_{\sigma\in\widehat{\X}_{2n,k}}q^{\312\sigma-k}p^{\231\sigma-k}
\end{align}
with 
$\overbar{\X}_{2n,k}=\{\sigma\in \overbar{\X}_{2n}: \des\,\sigma=k\}$ and $\widehat{\X}_{2n,k}=\{\sigma\in \widehat{\X}_{2n}: \des\,\sigma=k\}$. We define  an  action $\varphi_x$ on $\overbar{\X}_{2n}$ as the following.

For $\sigma\in \overbar{\X}_{2n}$, and $x\in S_{\sigma}$,  we define $\varphi_x(\sigma)$ 
as follows:

\begin{itemize}
\item[(A)]
If $x=2i-1$ and $s_{\sigma}(2i-1)$ is odd, we factorize $\sigma=\tau_3'\tau_2'\tau_1'(2i)\tau_1\tau_2\tau_3$, where $\tau_1$ and $\tau_2'$ is the maximal sequence of consecutive entries greater than $2i$, $\tau_2$ and $\tau_1'$ is the maximal sequence of consecutive entries smaller than $2i$. We define
$$
\varphi_x(\sigma)=\begin{cases}
\tau_3'\tau_2'\tau_1'\tau_1\tau_2(2i)\tau_3&\textrm{if $l_{\sigma}(2i)$ is even};\\
\tau_3'(2i)\tau_2'\tau_1'\tau_1\tau_2\tau_3&\textrm{if $l_{\sigma}(2i)$ is odd}.
\end{cases}
$$
\item[(B)]
If $x=2j$, and $s_{\sigma}(2j)$ is odd, we factorize $\sigma=\tau_3'\tau_2'\tau_1'(2j-1)\tau_1\tau_2\tau_3$, where $\tau_1$ and $\tau_2'$ is the maximal sequence of consecutive entries greater than $2j-1$, $\tau_2$ and $\tau_1'$ is the maximal sequence of consecutive entries smaller than $2j-1$. Define
$$
\varphi_x(\sigma)=\begin{cases}
\tau_3'(2j-1)\tau_2'\tau_1'\tau_1\tau_2\tau_3&\textrm{if $r_{\sigma}(2j-1)$ is even};\\
\tau_3'\tau_2'\tau_1'\tau_1\tau_2(2j-1)\tau_3&\textrm{if $r_{\sigma}(2j-1)$ is odd}.
\end{cases}
$$
\item[(C)]
If $x=2i-1$ and $s_{\sigma}(2i-1)$ is even,
\begin{itemize}
\item [(i)]
if $l_{\sigma}(2i-1)=2a$ for some $a\in\N$, then
$$
\varphi_x(\sigma)=w\in\overbar{\X}_{2n},
$$
where $w$ is constructed as follows.
\\
Let $w_0$ be the subsequence of $\sigma$ consisting of those elements which are smaller than $2i-1$ in $S_{\sigma}$. And $w_1$ ($\sigma_1$, respectively) is the subsequence of $w$ ($\sigma$, respectively) which consists of all the elements in $S_{\sigma}$.
We obtain $w_{01}$ by inserting $2i-1$ in $w_0$ such that the number of elements at the left side of $2i-1$ in $w_{01}$ minus two times of the number of even-odd descents at the left side of $2i-1$ in $w_{01}$ is $2a+1$. Then we insert the rest elements of $S_{\sigma}$ in $w_{01}$ one by one in increasing order to obtain $w_1$, such that the order of the rest elements in $S_{\sigma}$ appear in $w_1$ is the same as in $\sigma_1$. (The same order means when restrict $w_1$ and $\sigma_1$ on the elements $\{1,2,\ldots,k\}$ for $|S_{\sigma}|\geq k\geq 2i$ then the elements at left side of $k$ minus two times of the number of even-odd descents are the same in the restrictions of $w_1$ and $\sigma_1$) Last, we insert the elements of $[2n]\setminus S_{\sigma}$ one by one in increasing order in $w_1$ to obtain $w$ such that $(l_w(i),r_w(i))=(l_{\sigma}(i),r_{\sigma}(i))$.
\item[(ii)]
if $l_{\sigma}(2i-1)=2a+1$ for some $a\in\N$, then
$$
\varphi_x(\sigma)=w\in\overbar{\X}_{2n},
$$
where $w$ is constructed by following almost the process in (i), except when obtain $w_{01}$ by inserting $2i-1$ in $w_0$ such that the number of the elements at the left side of $2i-1$ in $w_{01}$ minus two times the number of even-odd descents at the left side of $2i-1$ in $w_{01}$ is $2a$.
\end{itemize}
\item[(D)]
If $x=2j$ and $s_{\sigma}(2j)$ is even,
\begin{itemize}
\item[(i)]
if $r_{\sigma}(2j)=2a$ for some $a\in\N$, then
$$
\varphi_x(\sigma)=w\in\overbar{\X}_{2n},
$$
Let $w_0$ be the subsequence of $\sigma$ which consists of the elements in $S_{\sigma}$ and smaller than $2j$. And $w_1$ ($\sigma_1$, respectively) be the subsequence of $w$ ($\sigma$, respectively) which consists of all the elements in $S_{\sigma}$.
We obtain $w_{01}$ by inserting $2j$ in $w_0$ such that the number of elements at the right side of $2j$ in $w_{01}$ minus two times of the number of even-odd descents at the right side of $2j$ in $w_{01}$ is $2a+1$. Then we insert the rest elements of $S_{\sigma}$ in $w_{01}$ one by one in increasing order to obtain $w_1$, such that the order of the rest elements in $S_{\sigma}$ appear in $w_1$ is the same as in $\sigma_1$. Last, we insert the elements of $[2n]\setminus S_{\sigma}$ one by one in increasing order in $w_1$ to obtain $w$ such that $(l_w(i),r_w(i))=(l_{\sigma}(i),r_{\sigma}(i))$.
\item[(ii)]
if $r_{\sigma}(2j)=2a+1$ for some $a\in\N$, then 
$$
\varphi_x(\sigma)=w\in\overbar{\X}_{2n},
$$
where $w$ is constructed by following almost the process in (i), except when obtain $w_{01}$ by inserting $2j$ in $w_0$ such that the number of the elements at the right side of $2j$ in $w_{01}$ minus two times of the number of even-odd descents at the right side of $2j$ in $w_{01}$ is $2a$.
\end{itemize}
\end{itemize}

\begin{exam}
For $\sigma=56347812910\in\X_{10}$,
the set of descent tops and descent bottoms is $S_{\sigma}=\{1,3,6,8\}$. So we have,
$\varphi_1(\sigma)=25634781910$, $\varphi_3(\sigma)=56124783910$, $\varphi_6(\sigma)=57834612910$, $\varphi_8(\sigma)=56348127910$. We illustrate the $\varphi$ act on $1$ and $8$ in Figure~\ref{V}.
\end{exam}
\begin{figure}[h]
\begin{tikzpicture}[scale=0.2]
\draw(0,0)--(6,6);
\fill(0,0) circle (7pt);
\node[below] at (0,0) {$0$};
\fill(5,5) circle (7pt);
\node[below] at (5,5) {$5$};
\fill(6,6) circle (7pt);
\node[above] at (6,6) {$6$};
\draw(6,6)--(9,3);
\fill(9,3) circle (7pt);
\node[below] at (9,3) {$3$};
\draw(9,3)--(10,4);
\fill(10,4) circle (7pt);
\node[above] at (10,4) {$4$};
\draw(10,4)--(14,8);
\fill(13,7) circle (7pt);
\node[below] at (13,7) {$7$};
\fill(14,8) circle (7pt);
\node[above] at (14,8) {$8$};
\draw(14,8)--(21,1);
\fill(21,1) circle (7pt);
\node[below] at (21,1) {$1$};
\draw(21,1)--(22,2);
\fill(22,2) circle (7pt);
\node[above] at (22,2) {$2$};
\draw(22,2)--(31,10);
\fill(31,10) circle (7pt);
\node[below] at (30,9) {$9$};
\fill(30,9) circle(7pt);
\node[above] at (31,10) {$10$};
\draw[>=latex,->,dashed](22,2)--(2,2);
\draw(35,0)--(41,6);
\fill(35,0) circle (7pt);
\node[below] at (35,0) {$0$};
\fill(40,5) circle (7pt);
\node[below] at (40,5) {$5$};
\fill(41,6) circle (7pt);
\node[above] at (41,6) {$6$};
\draw(41,6)--(44,3);
\fill(44,3) circle (7pt);
\node[below] at (44,3) {$3$};
\draw(44,3)--(45,4);
\fill(45,4) circle (7pt);
\node[above] at (45,4) {$4$};
\draw(45,4)--(49,8);
\fill(48,7) circle (7pt);
\node[below] at (48,7) {$7$};
\fill(49,8) circle (7pt);
\node[above] at (49,8) {$8$};
\draw(49,8)--(56,1);
\fill(56,1) circle (7pt);
\node[below] at (56,1) {$1$};
\draw(56,1)--(57,2);
\fill(57,2) circle (7pt);
\node[above] at (57,2) {$2$};
\draw(57,2)--(66,10);
\fill(66,10) circle (7pt);
\node[below] at (65,9) {$9$};
\fill(65,9) circle(7pt);
\node[above] at (66,10) {$10$};
\draw[>=latex,->,dashed](48,7)--(63,7);
\end{tikzpicture}
\caption{$\varphi_1(\sigma)=25634781910$ and $\varphi_8(\sigma)=56348127910$}\label{V}
\end{figure}
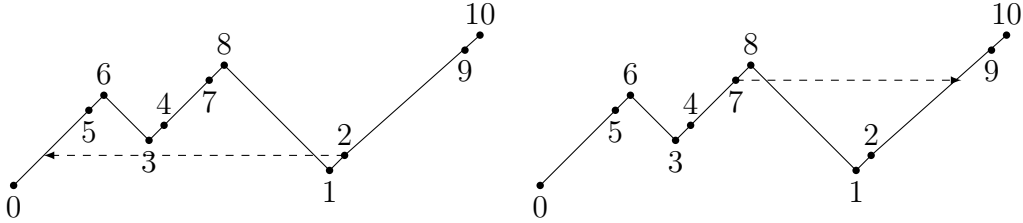

We extend $\varphi_{x}$ to all $x\in [2n]$ by 
$\varphi_{x}(\sigma)=\sigma$ for $x\notin S_{\sigma}$. Clearly 
the action $\varphi_x$ is an involution on $\overbar{\X}_{2n}$, besides two actions 
 $\varphi_x$ and $\varphi_y$ commute for all $x,y\in [2n]$. Hence for any subset $X\subset[2n]$ we may define the action $\varphi_X$ on $\sigma\in\overbar{\X}_{2n}$ by
\begin{equation}\label{def:action}
\varphi_X(\sigma)=\prod_{x\in X}\varphi _x(\sigma).
\end{equation}

\begin{lem}\label{lem3}
Let $\sigma\in\overbar{\X}_{2n}$ with weak signature $S_{\sigma}$. 
For any $x\in S_{\sigma}$,
let
$$
\Delta_{\varphi_x}(\sigma)=(\231\sigma',\312\sigma')
-(\231\sigma,\312\sigma)
$$ 
with $\sigma':=\varphi_x(\sigma)$, then  $S_\sigma=S_{\sigma'}$ and 
\begin{enumerate}[(a)]
\item
If $x$ and $s_{\sigma}(x)$ are odd, then
\begin{align*}
\Delta_{\varphi_x}(\sigma) 
=\begin{cases}
  (-1, 1)& \textrm{if $l_{\sigma}(x+1)$  is even}
   \\
   (1, -1)&\textrm{if  $l_{\sigma}(x+1)$ is odd}; 
\end{cases}
\end{align*}
\item
If $x$ is even but $s_{\sigma}(x)$ is odd, then
\begin{align*}
\Delta_{\varphi_x}(\sigma) 
=\begin{cases}
  (1, -1) & \textrm{if $l_{\sigma}(x-1)$  is even}
  \\
   (-1, 1) &\textrm{if  $l_{\sigma}(x-1)$ is odd;} 
\end{cases}
\end{align*}
\item
If $x$ is odd but $s_{\sigma}(x)$ is even, then
\begin{align*}
\Delta_{\varphi_x}(\sigma) 
=\begin{cases}
  (-1,1)& \textrm{if $l_{\sigma}(x)$  is even}
   \\
   (1, -1)& \textrm{if  $l_{\sigma}(x)$ is odd;}
 \end{cases}  
\end{align*}
\item
If $x$  and $s_{\sigma}(x)$ are even, then
\begin{align*}
\Delta_{\varphi_x}(\sigma) 
=\begin{cases}
  (1,-1)& \textrm{if $r_{\sigma}(x)$  is even}
   \\
   (-1, 1)&\textrm{if  $r_{\sigma}(x)$ is odd. }
\end{cases}
\end{align*}
\end{enumerate}
\end{lem}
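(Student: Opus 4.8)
The plan is to prove all four parts uniformly, by reducing each action $\varphi_x$ to the relocation of a single entry and then invoking the signature identity of Lemma~\ref{lem:signature}. The first step is to record that $\varphi_x$ fixes the descent set. In cases (A) and (B) the relocated entry (namely $2i$, resp.\ $2j-1$) lies outside $S_\sigma$: this is forced by the $\mathcal{E}$-property, since $2i-1$ is a descent bottom (resp.\ $2j$ is a descent top), so the other member of the doubleton cannot belong to a descent pair. Hence the moved entry is itself in no descent pair, and reading off the factorization $\sigma=\tau_3'\tau_2'\tau_1'(\cdot)\tau_1\tau_2\tau_3$ one checks that the vacated junction and the two new junctions are all ascents (a maximal block $\tau_k$ is flanked by entries on the opposite side of the moved value). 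Thus no descent is created or destroyed and $S_{\sigma'}=S_\sigma$. In cases (C) and (D) the reconstruction reinserts the entries of $S_\sigma$ in increasing order with the same relative signature data as in $\sigma$, so again $S_{\sigma'}=S_\sigma$ and $\sigma'\in\overbar{\X}_{2n}$; verifying this is where an induction on the insertion procedure is required.

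The second step is the quantitative heart. Since $S_{\sigma'}=S_\sigma$ the signature function is unchanged, so by Lemma~\ref{lem:signature} the quantity $l_\sigma(v)+r_\sigma(v)=s_\sigma(v)-1$ is preserved for every value $v$. Because the relocation preserves the relative order of all entries other than the moved one, the pair $(l_\sigma(v),r_\sigma(v))$ is unchanged for every $v$ other than the moved entry: in the local-hop cases (A),(B) this is immediate as the moved entry lies in no descent pair, while in the reconstruction cases (C),(D) it is built into the construction and must be verified. Consequently $\Delta_{\varphi_x}(\sigma)=(\Delta r,\Delta l)$, where $\Delta l,\Delta r$ refer to the single moved entry, and the invariance of $l+r$ forces $\Delta l+\Delta r=0$. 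This already yields the announced form $(\mp1,\pm1)$; it remains to see that exactly one unit is exchanged and to fix its sign.

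The third step pins down the single unit. In the local-hop cases the move carries the moved value across exactly one descent pair straddling it, namely the descent at the unique large--small block boundary inside the swept region (all other junctions being ascents, and no descent internal to a monochromatic block straddling the value); so precisely one left-embracing is converted into a right-embracing, or conversely. Moving the entry to the left turns a left-embracing into a right-embracing, giving $\Delta=(+1,-1)$, while moving it to the right gives $\Delta=(-1,+1)$, the direction being dictated by the parity condition in the definition of $\varphi_x$. For cases (C),(D) the analogous single-unit change is forced by the insertion prescription, which adjusts the signed count $(\#\,\text{left})-2(\#\,\text{even-odd descents left})$ of the relocated descent element by exactly one, hence changes $l_\sigma(x)$ (resp.\ $r_\sigma(x)$) by $\pm1$ in the prescribed direction.

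Finally I would translate each direction rule back into the parity appearing in the statement. In cases (a), (c), (d) the statistic governing the action ($l_\sigma(x+1)$, $l_\sigma(x)$, $r_\sigma(x)$ respectively) is exactly the one named in the corresponding part of the lemma, so the sign reads off directly. Case (b) is the one delicate point: the action is governed by the parity of $r_\sigma(x-1)$ whereas the statement is phrased through $l_\sigma(x-1)$, so here one converts between the two parities using $l_\sigma(x-1)+r_\sigma(x-1)=s_\sigma(x-1)-1$ together with $s_\sigma(x-1)=s_\sigma(x)+1$ (valid because $x-1\notin S_\sigma$). The main obstacle throughout is the bookkeeping in the reconstruction cases (C)--(D): one must confirm that the procedure genuinely preserves $S_\sigma$, the $\mathcal{E}$-property, and every embracing number except that of $x$, and that it relocates $x$ in the correct direction. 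Once this locality is established, steps two and three close the argument.
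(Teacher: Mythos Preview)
Your approach is essentially that of the paper---both reduce to tracking how the embracing numbers of a single relocated entry change---but yours is more carefully organized. In particular, invoking Lemma~\ref{lem:signature} to force $\Delta l+\Delta r=0$ once $S_{\sigma'}=S_\sigma$ is established, and then isolating the unique straddling descent crossed by the hop, is cleaner than the paper's bare sketch (which handles~(a) directly and dismisses~(b) as ``similar to case~(a)'').

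There is, however, one step you should not leave implicit. In case~(b) you note correctly that the action is keyed to $r_\sigma(x-1)$ while the statement is phrased through $l_\sigma(x-1)$, and that these have opposite parity because $l+r=s_\sigma(x-1)-1$ is odd here. But if you actually carry the conversion through, you obtain the \emph{opposite} sign pattern from the one printed: when $r_\sigma(x-1)$ is odd the entry $x-1$ hops right past the descent at the boundary of $\tau_1$ and $\tau_2$, converting one right-embracing into a left-embracing, so $\Delta=(-1,+1)$; yet $r$ odd corresponds to $l_\sigma(x-1)$ even, for which the lemma asserts $(+1,-1)$. The paper's own worked example $\sigma=5\,6\,3\,4\,7\,8\,1\,2\,9\,10$ with $x=8$ (where $l_\sigma(7)=0$, $r_\sigma(7)=1$) gives $\Delta=(-1,+1)$ by direct count, and the use made of this case in the proof of Lemma~\ref{lem4} together with condition~(c) of Definition~\ref{np} also requires that sign. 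So the two subcases of~(b) appear to be transposed in the stated lemma; your argument is sound, but you should flag the discrepancy explicitly rather than assert that the parity conversion ``translates back'' to the printed form.
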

\begin{proof}
For case (a), as the action $\varphi_x$ moves $2i$
to the slot  between two adjacent descent pairs, so the number of $\231$ patterns reduces by $1$ and the number of $\312$ patterns increases by $1$ when $2i$ moves to the slot  between a  descent pair at right. And the number of $\231$ patterns increase by $1$ and the number of $\312$ patterns decreases by $1$ when $2i$ moves to the slot between a descent pair at  left. Case (b) is is similar to case (a).

For case (c), it corresponds to (C) of the action $\varphi_x$. In this case, it is easy to see that 
\begin{itemize}
\item if $l_{\sigma}(x)$ is even,
the construction of $\sigma'=\varphi_x(\sigma)$ decreases the right embracing number of $x$  by 1 and increases the left embracing number of $x$  by 1;
\item if $l_{\sigma}(x)$ is odd,
it increases the right embracing number of $x$  by 1 and  decreases the left embracing number of $x$ by 1. 
\end{itemize}
Case (d) is similar to case (c).
\end{proof}

\begin{lem}\label{lem4} For each $\hat{\sigma}\in\widehat{\X}_{2n,k}$
let $\Orb(\hat{\sigma})$ be the orbit of $\hat{\sigma}$ under $\varphi_S$. Then 
\begin{align}\label{pq-orbite}
\sum_{\sigma\in\Orb(\hat{\sigma})}p^{\231\sigma}q^{\312\sigma}=(p+q)^{2k}\cdot q^{\312\hat{\sigma}-k}p^{\231\hat{\sigma}-k}.
\end{align}
\end{lem}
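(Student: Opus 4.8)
The plan is to realise $\Orb(\hat\sigma)$ as a free $\Z_2^{2k}$-set on which the two statistics $\231$ and $\312$ split into $2k$ mutually independent two-valued contributions, so that the left-hand side of \eqref{pq-orbite} factors as a product of $2k$ copies of $(p+q)$ times a single monomial. First I would record the ambient structure. By Lemma~\ref{lem3} each $\varphi_x$ (for $x\in S:=S_{\hat\sigma}$) fixes the weak signature, so every element of $\Orb(\hat\sigma)$ shares the signature $S$, the same descent number $\des=k$, and $|S|=2k$, split into $k$ odd descent bottoms and $k$ even descent tops. Again by Lemma~\ref{lem3}, $\varphi_x$ changes $(\231,\312)$ by $(+1,-1)$ or $(-1,1)$; in particular $N:=\231\sigma+\312\sigma$ is constant on the orbit and $\varphi_x$ has no fixed point. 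Since the involutions $\varphi_x$ commute, $\Orb(\hat\sigma)=\{\varphi_X(\hat\sigma):X\subseteq S\}$ carries a $\Z_2^{2k}$-action.

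Next, for $\sigma\in\Orb(\hat\sigma)$ and $x\in S$ I set $b_x(\sigma)=1$ when $\varphi_x$ decreases $\231$ (so that $\sigma$ occupies the higher-$\231$ state of the pair $\{\sigma,\varphi_x(\sigma)\}$) and $b_x(\sigma)=0$ otherwise. By Lemma~\ref{lem3} the value $b_x(\sigma)$ is controlled by the parity of a single left- or right-embracing number attached to the domino $\{2i-1,2i\}$ containing $x$. The crucial step is the \emph{independence claim}: for distinct $x,y\in S$ one has $b_x(\varphi_y(\sigma))=b_x(\sigma)$, while $b_x(\varphi_x(\sigma))=1-b_x(\sigma)$. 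Here I would exploit that $\hat\sigma$, being an $\mathcal{E}$-permutation, has no domino with both entries in $S$; hence the $2k$ elements of $S$ occupy $2k$ \emph{distinct} dominoes, and $\varphi_y$ disturbs only the embracing numbers attached to $y$'s own domino, leaving untouched the parity governing $b_x$ when $x$ lies in a different domino. I expect this independence to be the main obstacle: it requires careful bookkeeping of how the global reinsertions in cases (C) and (D) of the action affect embracing numbers outside the active domino.

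Granting independence, the map $\sigma\mapsto(b_x(\sigma))_{x\in S}$ is a bijection $\Orb(\hat\sigma)\to\{0,1\}^{2k}$: surjectivity is immediate by flipping bits one at a time, and injectivity (equivalently, freeness of the action) follows because flipping $b_x$ is detectable. Moreover
\[
\231\sigma=\mu+\sum_{x\in S}b_x(\sigma),\qquad \312\sigma=\nu+\sum_{x\in S}\bigl(1-b_x(\sigma)\bigr),
\]
where $\mu=\min_{\sigma\in\Orb(\hat\sigma)}\231\sigma$ and $\nu=N-\mu-2k$. Summing the weights then telescopes into a product:
\[
\sum_{\sigma\in\Orb(\hat\sigma)}p^{\231\sigma}q^{\312\sigma}=p^{\mu}q^{\nu}\prod_{x\in S}(p+q)=(p+q)^{2k}\,p^{\mu}q^{\nu}.
\]

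Finally I would identify $\mu$ and $\nu$ from the normalization. Reading Definition~\ref{np} through Lemma~\ref{lem3} case by case shows that for the normalized representative $\hat\sigma$ every odd element of $S$ satisfies $b_x(\hat\sigma)=1$ and every even element satisfies $b_x(\hat\sigma)=0$; this is precisely what conditions (a)--(d) encode, and it is consistent with the proposition preceding Theorem~\ref{thm:main-pq} guaranteeing $\231\hat\sigma\ge k$ and $\312\hat\sigma\ge k$. Since there are $k$ odd and $k$ even elements of $S$, we get $\sum_{x\in S}b_x(\hat\sigma)=k$, whence $\231\hat\sigma=\mu+k$ and $\312\hat\sigma=\nu+k$, i.e.\ $\mu=\231\hat\sigma-k$ and $\nu=\312\hat\sigma-k$. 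Substituting these into the displayed product yields \eqref{pq-orbite}.
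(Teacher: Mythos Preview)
Your argument is essentially the same as the paper's: both factor the orbit sum by pairing each $x\in S$ with an independent $(p,q)$-flip and then read off the exponents from the normalization conditions (the paper phrases this as each descent top contributing $(1+pq^{-1})$ and each descent bottom $(1+qp^{-1})$, which is your bit function $b_x$ in disguise). If anything, your write-up is more explicit than the paper's about the independence of the $b_x$'s---the paper simply asserts that for any $X\subseteq S$ the effect of $\varphi_X$ on $\hat\sigma$ is additive over $X$---so your flagging of that step as the main obstacle is appropriate, and the justification you sketch (distinct dominoes, plus the built-in preservation of embracing numbers for $j\neq x$ in cases~(C)--(D)) is exactly what underlies the paper's claim.
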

\begin{proof}
We rewrite \eqref{pq-orbite} as
\begin{align}\label{Lemma_3}
\sum_{\sigma\in\Orb(\hat{\sigma})}p^{\231\sigma}q^{\312\sigma}=
(1+p^{-1}q)^{k} \cdot (1+pq^{-1})^{k}\cdot q^{\312\hat{\sigma}}p^{\231\hat{\sigma}}.
\end{align} 
The fours cases (a)-(3.4a), (b)-(3.4c), (c)-(3.4e) and  (d)-(3.4g)
correspond to the four cases of normalized 
$\mathcal{E}$-permutations in Definition~\ref{np}. If 
$\hat{\sigma}\in\widehat{\X}_{2n}$, 
by Lemma~\ref{lem3}, for any $X\subset S_{\hat{\sigma}}$, applying
action   $\varphi_X$  on a descent top (resp. bottom) of $\hat{\sigma}$   decreases  the number of $\312$ (resp. $\231$) patterns  by 1
and increases the number of $\231$ (resp. $\312$) patterns by 1, namely,  
it modifies the weight of $\hat\sigma$ by  the factor  $pq^{-1}$ (resp. $qp^{-1})$). Since $\hat{\sigma}$ has  exactly $k$ descent tops (resp. bottoms), we obtain \eqref{Lemma_3}.
\end{proof}
\begin{exam}
If  $\sigma=124635\in\widehat{\X}_{6,1}$ with $S=\{3, 6\}$, then
$\varphi_3(\sigma)=126345$, $\varphi_6(\sigma)=124563$ and $\varphi_S(\sigma)=125634$. Thus 
$$
\Orb(\sigma)=\{124635, 126345, 124563, 125634\}.
$$
\end{exam}
By Lemma~\ref{lem4}, summing over all the orbits yields \eqref{eq:gamma-h}.
\qed

\section{Other interpretations of 
Genocchi numbers}

For $\sigma=\sigma_1\sigma_2\ldots\sigma_n\in\S_n$, the statistics $\res\sigma$ is the number of pairs $(i,j)$ such that $1\leq i<j\leq n-1$ and $\sigma_{j+1}>\sigma_{i}>\sigma_{j}$.\footnote{
The $\res$ statistic is called  \emph{artificial statistic} and denoted by ``art'' in~\cite{EFLL21}.}
\begin{defi}\label{peak-valley}
For  $\sigma=\sigma_1\ldots\sigma_n\in \S_n$  with $\sigma_0=\sigma_{n+1}=0$, the value $\sigma_i$ is
\begin{itemize}
\item a peak if $\sigma_{i-1}<\sigma_i$ and $\sigma_i>\sigma_{i+1}$;
\item a valley if $\sigma_{i-1}>\sigma_i$ and $\sigma_i<\sigma_{i+1}$;
\item a double ascent if $\sigma_{i-1}<\sigma_i<\sigma_{i+1}$;
\item a double descent if $\sigma_{i-1}>\sigma_i>\sigma_{i+1}$.
\end{itemize}
\end{defi}
Let $\dd\,\sigma$ be the number of double descents 
in $\sigma$.
Define the enumerative polynomials
\begin{align}
Y_n(a,p,q,y,t)=\sum_{\sigma\in\Y_{2n+1}^*}a^{\lema\,\sigma}p^{\res\,\sigma}q^{\312\,\sigma}y^{\dd\,\sigma}t^{\des\,\sigma}
\end{align}

\begin{thm}\label{oddodd:cf} For $n\geq 1$ we have 
\begin{align}
Y_{n}(a,p,q,y,t)=X_n(a,1, 1,1,  p,q, y,t).
\end{align}
\end{thm}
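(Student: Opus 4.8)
The plan is to show that both sides of the claimed identity have the same J-fraction as ordinary generating function, and then read off equality of coefficients. The right-hand side is already under control: specializing Theorem~\ref{master-cf} at $\bar a=b=\bar b=1$ and using $[1,n,1]_{p,q}=[n]_{p,q}$, $[a,n,1]_{p,q}=[a,n]_{p,q}$ and $[1,n]_{q,p}=[n]_{p,q}$, one finds that $1+\sum_{n\ge1}X_n(a,1,1,1,p,q,y,t)x^n$ is the J-fraction with coefficients $b_{n-1}=(1+ty)[a,n]_{p,q}[n]_{p,q}$ and $\lambda_n=t[a,n]_{p,q}[a,n+1]_{p,q}[n]_{p,q}[n+1]_{p,q}$ for $n\ge1$. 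Thus it suffices to prove that $1+\sum_{n\ge1}Y_n(a,p,q,y,t)x^n$ has exactly this J-fraction expansion.

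To this end I would replicate the encoding of Section~3 for odd-odd descent permutations. Concretely, I plan to build a bijection $\Psi\colon\Y_{2n+1}^*\to\mathcal{PD}_n$ parallel to the map $\Phi$ of Lemma~\ref{bijection}. The input here is a permutation of $[2n+1]$ all of whose descents are odd-odd and which ends in an odd letter, so its set $S_\tau$ of descent tops and bottoms consists of odd numbers; the appropriate ``weak signature'' and its associated height function must be set up for this parity pattern, and the trailing odd letter together with the extra odd value $2n+1$ have to be absorbed so that the path still has length $n$. As in Section~3, the path diagram would be read off from the successive restrictions of $\tau$, the step at stage $j$ being determined by which of the two relevant values lie in $S_\tau$, and the labels $(\xi_j,\xi'_j)$ recording right-embracing numbers.

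The core of the argument is then the analogue of Lemma~\ref{bijection} and of the lemma following it: under $\Psi$ the five statistics must translate into exactly the path-diagram data occurring in the weight $w(s_i,(\xi_i,\xi'_i))$ of the proof of Theorem~\ref{master-cf} at $\bar a=b=\bar b=1$. That is, I would verify $\des\tau=|U|_\omega+|L_2|_\omega$, $\dd\tau=|L_2|_\omega$, $\lema\tau=\sum_i\chi(\xi_i'=h(\omega_i))$, $\res\tau=\sum_i(\xi_i+\xi'_i)$, and $\312\tau=|U|_\omega+|D|_\omega+\sum_i(h(\omega_i)-\xi_i+h(\omega_i)-\xi'_i)$. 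Granting these, the monomial $a^{\lema\tau}p^{\res\tau}q^{\312\tau}y^{\dd\tau}t^{\des\tau}$ equals the product of step weights $\prod_i w(s_i,(\xi_i,\xi'_i))$ specialized at $\bar a=b=\bar b=1$, so $Y_n$ is the sum of $W(\omega)$ over $\mathcal{MP}_n$ with these weights. Flajolet's Lemma~\ref{flajolet} then yields precisely the J-fraction computed above, and comparing coefficients gives $Y_n(a,p,q,y,t)=X_n(a,1,1,1,p,q,y,t)$.

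The main obstacle is the bijection $\Psi$ together with the statistic bookkeeping: odd-odd descent permutations of $[2n+1]$ have a slightly different anatomy from the even-odd permutations of $[2n]$ treated in Section~3 (odd total length, a forced trailing odd letter, and descent tops and bottoms all of the same parity), so the signature and height conventions, and in particular the identifications $\dd\leftrightarrow|L_2|_\omega$ and $\res\leftrightarrow\sum_i(\xi_i+\xi'_i)$, require care to match the $X$-side weights verbatim rather than merely up to some symmetry. An equivalent route would be to construct a direct statistic-preserving bijection $\Y_{2n+1}^*\to\X_{2n}$ sending $(\lema,\res,\312,\dd,\des)$ to $(\lema,\231,\312,\TB,\des)$; either way the crux is the same compatibility of the two vincular pattern statistics $\res$ and $\231$ with the lattice-path model.
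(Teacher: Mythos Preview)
Your overall plan---compare J-fractions by building a bijection $\Y^*_{2n+1}\to\mathcal{PD}_n$ and invoking Lemma~\ref{flajolet}---is exactly what the paper does, and your computation of the target J-fraction from Theorem~\ref{master-cf} is correct.

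The place where your sketch does not line up with the paper, and where it would fail if taken literally, is the bijection itself. You propose to determine the step $s_j$ by ``which of the two relevant values lie in $S_\tau$'' and to label by right-embracings. In $\Y^*_{2n+1}$ every descent top and every descent bottom is odd, so $2j\notin S_\tau$ for all $j$; testing membership of the pair $(2j-1,2j)$ in $S_\tau$ therefore yields only two step types, not four. The paper's fix is to classify the single odd value $2j-1$ by its local shape---valley, peak, double ascent, or double descent---which is exactly the fourfold distinction ``is $2j-1$ a descent bottom? is it a descent top?''; this gives $U,D,L_1,L_2$ respectively and makes $\dd\leftrightarrow|L_2|_\omega$ immediate. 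For the labels the paper takes $(\xi_j,\xi'_j)=(l_\sigma(2j-1),l_\sigma(2j))$, i.e.\ \emph{left}-embracings, so that $\sum_j(\xi_j+\xi'_j)=\312\sigma$ (the value $2n+1$ is the global maximum and contributes nothing) while the complementary exponents account for $\res\sigma$, and the marking $a^{\chi(\xi'_j=0)}$ tracks $\lema$. Your suggestion to use right-embracings with $\res\tau=\sum_i(\xi_i+\xi'_i)$ is off: with the paper's $r_\sigma$ that sum equals $\231\tau$, not $\res\tau$, already for $\sigma=231\in\Y^*_3$ (where $\231\sigma=1$ but $\res\sigma=0$).

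So the strategy is right, but the concrete encoding must be the peak/valley/double-ascent/double-descent classification of $2j-1$ together with left-embracing labels; once that replacement is made, the weight computation and Lemma~\ref{flajolet} go through precisely as you outline.
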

\begin{proof} It suffices to show  that the two polynomials have the same ordinary generating functions.
In view of Corollary~\ref{cor-master-cf} we need only to prove that 
\begin{align}\label{y-cf}
\sum_{n=0}^{\infty}Y_{n}(a,p,q,y,t)x^{n}=\cfrac{1}{
1  -(1+yt)[a,1]_{p,q}\, x-\cfrac{t[a,1]_{p,q} [a,2]_{p,q}[2]_{p,q}\cdot  x^2}{
\cdots}}
\end{align}
with coefficents 
\begin{align*}\begin{cases}
b_{n-1}&=(1+ty)[a,n]_{p,q}[n]_{p,q} \quad \textrm{for}\quad n\geq 1,\\
\lambda_{n}&=t[a,n]_{p,q}[a,n+1]_{p,q}[n]_{p,q}[n+1]_{p,q}.
\end{cases}
\end{align*}
As the proof is similar to that of Theorem~\ref{master-cf}, we just indicate  the mapping $\psi: \Y_{2n+1}^*\to \mathcal{PD}_n$ with the associated  weight and  omit the details.

Let  $\sigma\in\Y_{2n+1}^*$ and 
$\psi(\sigma)=(w,(\xi,\xi'))$.
For $j\in[n]$ we define  the steps $s_j=w_j-w_{j-1}$ of 
the path $w=(w_0,w_1,\ldots, w_n)$ as follows:
\begin{equation}
s_j=\begin{cases}
U& \textrm{if}\quad 2j-1\quad\textrm{is a valley of} \; \sigma;\\
D&\textrm{if}\quad 2j-1\quad\textrm{is a peak of } \; \sigma;\\
L_1&\textrm{if}\quad 2j-1\quad\textrm{is a double ascent of } \; \sigma;\\
L_2&\textrm{if}\quad 2j-1\quad\textrm{is a double descent of } \; \sigma,
\end{cases}
\end{equation}
and let  $(\xi_j,\xi'_j)=(l_{\sigma}(2j-1),l_{\sigma}(2j))$.
Thus,  the weight of the path diagram $(w,(\xi,\xi'))=\prod_{i=1}^nw(w_i,(\xi_i,\xi'_i))$ is defined by
\begin{align}
w(\omega_i,  (\xi_i, \xi'_i))=\begin{cases}
a^{\chi(\xi'_i=0)}\,q^{\xi_i}p^{h-\xi_i}\cdot q^{\xi'_i}p^{h+1-\xi'_i} \cdot t&\; \textrm{if}\quad \omega_i=U;\\
a^{\chi(\xi'_i=0)}\,q^{\xi_i}p^{h+1-\xi_i}\cdot q^{\xi'_i}p^{h-\xi'_i}&\; \textrm{if}\quad \omega_i=D;\\
a^{\chi(\xi'_i=0)}\,q^{\xi_i}p^{h-\xi_i}\cdot q^{\xi'_i}p^{h-\xi'_i}&\; \textrm{if}\quad \omega_i=L_1;\\
a^{\chi(\xi'_i=0)}\,q^{\xi_i}p^{h-\xi_i}\cdot q^{\xi'_i}p^{h-\xi'_i}\cdot yt&\; \textrm{if}\quad \omega_i=L_2.
\end{cases}
\end{align}
The J-fraction  \eqref{y-cf} follows then from Lemma~\ref{flajolet}.
\end{proof}

A permutation $\sigma\in\Y^*_{2n+1}$  is called  a \emph{$\mathcal{F}$-permutation} if $\sigma$ contains no double descent, and the last entry of $\sigma$ is a peak, i.e., $\sigma_{2n}<\sigma_{2n+1}$. Denote $\overline{\Y}^*_{2n+1}$ as the set of $\mathcal{F}$-permutations with length $\S_{2n+1}$, and $\overline{\Y}^*_{2n+1,k}$  the subset of permutations in $\overline{\Y}^*_{2n+1}$ with $k$ descents.
\\

\begin{thm}\label{oddodd:gamma}
We have
\begin{align}\label{gamma:odd-odd}
Y_n(a,p,q,y,t)=\sum_{k=0}^{\lfloor n/2\rfloor}\gamma_{n,k}(a,p,q)t^k(1+yt)^{n-2k},
\end{align}
whith coefficients 
$$\gamma_{n,k}(a,p,q)=\sum_{\sigma\in\overline{\Y}^*_{2n+1,k}}a^{\lema\,\sigma}p^{\res\sigma}q^{\312\sigma}.
$$
\end{thm}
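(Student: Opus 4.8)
The plan is to prove \eqref{gamma:odd-odd} by running a \emph{valley-hopping} group action on $\Y^*_{2n+1}$, playing exactly the role that the Inter-hopping action $\phi_r$ plays for the even-odd descent model in Section~5. Combining Theorem~\ref{oddodd:cf} with the $\gamma$-formula of Theorem~\ref{thm:gamma} (at $b=1$) already yields \emph{some} $\gamma$-expansion of $Y_n$, but with coefficients indexed by the $\mathcal{E}$-permutations $\overbar{\X}_{2n,k}$ rather than by the $\mathcal{F}$-permutations $\overline{\Y}^*_{2n+1,k}$; to get the stated combinatorial description directly I would instead act on $\Y^*_{2n+1}$ itself. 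Recall from the proof of Theorem~\ref{oddodd:cf} that under $\psi\colon\Y^*_{2n+1}\to\mathcal{PD}_n$ the $r$-th step of the path is $U$, $D$, $L_1$ or $L_2$ according as the odd value $2r-1$ is a valley, a peak, a double ascent or a double descent of $\sigma$, and that the weight records $\des\sigma=|U|_\omega+|L_2|_\omega$ and $\dd\sigma=|L_2|_\omega$. Hence a permutation avoids the step $L_2$ exactly when it has no double descent among the hopped odd values; since with the convention $\sigma_0=\sigma_{2n+2}=0$ the absence of double descents already forces $\sigma_{2n}<\sigma_{2n+1}$, these are precisely the canonical representatives, namely the members of $\overline{\Y}^*_{2n+1}$.

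For each hopped odd value $x=2r-1$ I would define an involution $\psi_r$ on $\Y^*_{2n+1}$ by the modified Foata--Strehl rule: $\psi_r$ fixes $\sigma$ when $x$ is a peak or a valley, and it carries $x$ from a double-ascent position to a double-descent position (and back) by moving it across the surrounding blocks of larger and smaller entries, interleaving those blocks by the same recipe as in (A1)--(A2). As with $\phi_r$, the maps $\psi_r$ are commuting involutions, and every orbit contains a unique $\sigma$ with no double descent, i.e.\ a unique $\hat\sigma\in\overline{\Y}^*_{2n+1}$. If $\hat\sigma\in\overline{\Y}^*_{2n+1,k}$ then $\des\hat\sigma=k$ and $\hat\sigma$ has exactly $n-2k$ double ascents among the hopped letters, so toggling any subset of them into double descents multiplies the weight by $yt$ per toggle and
\[
\sum_{\sigma\in\Orb(\hat\sigma)}a^{\lema\,\sigma}p^{\res\,\sigma}q^{\312\,\sigma}y^{\dd\,\sigma}t^{\des\,\sigma}
=a^{\lema\,\hat\sigma}p^{\res\,\hat\sigma}q^{\312\,\hat\sigma}\;t^{k}(1+yt)^{\,n-2k}.
\]
Summing over the representatives $\hat\sigma$ and collecting by $k=\des\hat\sigma$ then gives \eqref{gamma:odd-odd} with the advertised coefficients.

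The crux is the analogue of Lemmas~\ref{kl2} and~\ref{kl1}: one must check that $\psi_r$ leaves all three statistics $\lema$, $\res$ and $\312$ invariant, while each double-ascent-to-double-descent toggle raises both $\des$ and $\dd$ by one. Invariance of $\lema$ is the easy half, since $\psi_r$ only relocates the odd letter $x$ and never creates or destroys an even left-to-right maximum, exactly as in Lemma~\ref{kl1}. The genuinely delicate point is the invariance of $\res$ and $\312$: I would verify, block by block, that every pattern counted by $\res$ or by $\312$ in which $x$ or an entry of the interleaved factors $\alpha_i,\beta_j$ participates is carried bijectively to a pattern of the same kind, the same bookkeeping as in the proof of Lemma~\ref{kl2} but now with $\res$ in place of $\231$. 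I expect this pattern-preservation, together with pinning down the boundary convention that makes ``no double descent'' coincide with the $\mathcal{F}$-permutation definition, to be where essentially all the work lies.
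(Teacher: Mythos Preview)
Your approach is valid and is precisely the alternative proof the paper sketches in the Remark immediately following Theorem~\ref{oddodd:gamma}: run Br\"and\'en's modified Foata--Strehl action on the odd values $2r-1$ of $\sigma\in\Y^*_{2n+1}$, use that each orbit has a unique representative with no double descent (i.e.\ a member of $\overline{\Y}^*_{2n+1}$), and invoke the known invariance of $\res$ and $\312$ under MFS together with the easy observation that $\lema$ is unchanged. Your counting of the free letters ($n-2k$ odd double ascents when $\des\hat\sigma=k$) is correct, since under $\psi$ these are exactly the $L_1$-steps of the Motzkin path.

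The paper's own proof takes a different and much shorter route. It simply observes that the J-fraction \eqref{y-cf} for $\sum_n Y_n x^n$ has coefficients $b_{n-1}=(1+ty)[a,n]_{p,q}[n]_{p,q}$ and $\lambda_n=t\cdot(\text{$y$-free})$; interpreting the moments as weighted Motzkin paths, each of the $n-2k$ level steps contributes a factor $(1+ty)$ and the $k$ down-steps contribute $t^k$, so $Y_n=\sum_k\gamma_{n,k}\,t^k(1+yt)^{n-2k}$ where $\gamma_{n,k}$ is the coefficient of $t^k$ in $Y_n(a,p,q,0,t)$, which by definition of $\overline{\Y}^*_{2n+1}$ is the stated sum. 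This continued-fraction argument bypasses all the pattern-invariance checks; your MFS route, on the other hand, is purely bijective and does not rely on Theorem~\ref{oddodd:cf}.

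One small correction: your description of $\psi_r$ as ``interleaving those blocks by the same recipe as in (A1)--(A2)'' is not accurate. The Inter-hopping action of Section~5 interleaves the $\alpha$- and $\beta$-blocks because it handles the \emph{pair} $\{2r-1,2r\}$ simultaneously in the even-odd model. The single-value MFS hop you need here merely relocates the one letter $x=2r-1$ across an adjacent maximal block of larger (or smaller) entries; no interleaving occurs. This does not affect your strategy, but the verification of $\res$- and $\312$-invariance should follow Br\"and\'en's original argument for MFS rather than the block bookkeeping of Lemma~\ref{kl2}.
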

\begin{proof} Note that $Y_n(a,p,q,0,t)
=\sum_{k=0}^{\lfloor n/2\rfloor} 
\gamma_{n,k}(a,p,q) t^k$ and 
\begin{align}\label{y0-cf}
\sum_{n=0}^{\infty}Y_{n}(a,p,q,0,t)x^{n}=\cfrac{1}{
1  -[a,1]_{p,q}\, x-\cfrac{t[a,1]_{p,q} [a,2]_{p,q}[2]_{p,q}\cdot  x^2}{
\cdots}}
\end{align}
with coefficents 
\begin{align*}\begin{cases}
b_{n-1}&=[a,n]_{p,q}[n]_{p,q} \quad \textrm{for}\quad n\geq 1,\\
\lambda_{n}&=t[a,n]_{p,q}[a,n+1]_{p,q}[n]_{p,q}[n+1]_{p,q}.
\end{cases}
\end{align*}
We derive  \eqref{gamma:odd-odd} by comparing \eqref{y-cf} and \eqref{y0-cf}.
\end{proof}
\begin{remark} We can also prove the above theorem by 
applying Brändén's modified Foata-Strehl action (a.k.a. MFS-action) on $\Y_{2n+1}^*$~\cite{Br08}. 
More precisely,  we move every double descent to a double  ascent position. It is known that that MFS-action does not change the number of $\res$ patterns and the number of $\312$ patterns. It is also clear that MFS-action does not change the even left-to-right maxima. 
\end{remark}

For $\sigma\in\overline{\Y}_{2n+1}^*$ and $i\in [n]$,
the doubleton  $\{2i-1, 2i\}$ is called a \emph{VOP pair} of $\sigma$ if $2i-1$ is either a valley or  peak of $\sigma$. As for $\mathcal{E}$-permutations, we associate  a sequence $(a(1), a(2), \cdots, a(2n))$ with $\sigma$  by 
$$
a(j)=f(j)-g(j)+1 \quad \textrm{for}\quad j\in [2n].
$$ 
where $f(i)$ (resp. $g(i)$) is the number of valleys (resp. peaks) less than $i$ in $\sigma$.

\medskip
\textbf{Fact }. If $\{2i-1, 2i\}$ is a VOP pair of $\sigma\in\overline{\Y}_{2n+1}^*$, then there is one and only one even element in $\{a(2i-1), a(2i)\}$.
\\
For convenience, we define the embracing number of 
 a VOP pair $x=\{2i-1,2i\}$ of $\sigma$ by
  $$
  l_{\sigma}(x)=\begin{cases}
  \231(2i-1)&\textrm{if $a(2i-1)$ is even};\\
  \312(2i)&\textrm{if $a(2i-1)$ is odd}.
  \end{cases}
  $$ 
\begin{defi}
A $\mathcal{F}$-permutation $\sigma\in\overline{\Y}^*_{2n+1}$ is called normalized, if for any VOP pair $x=\{2i-1, 2i\}$ of $\sigma$,  the embracing number  $l_{\sigma}(x)$ is even. 
Let  $\widehat{\Y}^*_{2n+1}$ be the set of normalized $\mathcal{F}$-permutations in $\overline{\Y}^*_{2n+1}$ and  $\widehat{\Y}^*_{2n+1,k}$ the subset of
normalized $\mathcal{F}$-permutations in $\widehat{\Y}^*_{2n+1}$
with $k$ descents by.
\end{defi}

\vskip 0.5cm
\textbf{Action $\bar{\varphi}_x$} on $\overline{\Y}_{2n+1}^*$. For a VOP pair $x=\{2i-1,2i\}$ of $\sigma\in\bar{\Y}^*_{2n+1,k}$. We define 
$\bar{\varphi}_x$ as follows.
\begin{enumerate}
\item
 If $a(2i)$ is even then we factorize $\sigma=\tau_3'\tau_2'\tau_1'(2i)\tau_1\tau_2\tau_3$, where $\tau_1$ and $\tau_2'$ is the maximal sequence of consecutive entries greater than $2i$, $\tau_2$ and $\tau_1'$ is the maximal sequence of consecutive entries smaller than $2i$. We define
$$
\bar{\varphi}_{x}(\sigma)=\begin{cases}
\tau_3'\tau_2'\tau_1'\tau_1\tau_2(2i)\tau_3&\textrm{if $l_{\sigma}(x)$ is even};\\
\tau_3'(2i)\tau_2'\tau_1'\tau_1\tau_2\tau_3&\textrm{if $l_{\sigma}(x)$ is odd}.
\end{cases}
$$
\item
If $a(2i-1)$ is even. 
\begin{enumerate}
\item
If $l_{\sigma}(2i-1)=2b$ for some $b\in\N$, then 
$$
\bar{\varphi}_{2i-1}(\sigma)=w\in\bar{\varphi}_{2n+1,k}^*,
$$
where $w$ is constructed as follows.
\\
If $2i-1$ is a valley (resp. peak) of $\sigma$, then insert $2i-1$ in $w_0$ such that at the left side of $2i-1$ in $w_{01}$, the number of valleys minus the numbers of peaks equals to $l_{2i-1}(\sigma)+1$ (here, valleys and peaks are those element smaller than $2i-1$ in $\sigma$). Then we insert one by one the rest peaks and valleys are greater than $2i-1$ in $\sigma$ into $w_{01}$ in increasing order to obtain $w_1$, such that those peaks and valleys say $2j-1$s are still peaks and valleys in $w_1$ and  for every $l_{\sigma}(2j-1)=l_{w_1}(2j-1)$. Last we insert the rest element of $\sigma$ in $w_1$ one by one in increasing order such that $l_w(i)=l_{\sigma}(i)$ for all $[2n+1]\setminus\{2i-1\}$. 
\item
If $l_{\sigma}(2i-1)=2b+1$ for some $b\in\N$, then 
$$
\bar{\varphi}_{2i-1}(\sigma)=w\in\bar{\Y}_{2n+1}^*,
$$
where $w$ is constructed by following almost the process in (a), except when obtain $w_{01}$ by inserting $2i-1$ in $w_0$ such that at the left side of $2i-1$ in $w_{01}$, the number of valleys minus the number of peaks equals to $l_{2i-1}(\sigma)-1$.
\end{enumerate}
\end{enumerate}
Clearly 
the action $\bar{\varphi}_x$ is an involution on $\overbar{\Y}_{2n+1}^*$, besides two actions 
 $\bar{\varphi}_x$ and $\bar{\varphi}_y$ commute for all $x,y$ are VOP pairs. Hence for any subset $X\subset H$ where $H$ is the set of VOP pairs of $\sigma$, we may define the action $\varphi_X$ on $\sigma\in\overbar{\X}_{2n}$ by
\begin{equation}\label{def:action}
\bar{\varphi}_X(\sigma)=\prod_{x\in X}\bar{\varphi} _x(\sigma).
\end{equation}
Following the similar proof of Lemma~\ref{lem3} and Lemma~\ref{lem4}, we also have the following two Lemmas for
$\mathcal{F}$-permutations.
\begin{lem}\label{lem01}
Let $\sigma\in\overline{\Y}_{2n+1}^*$, for any $x$ a VOP pair of $\sigma$, the permutation $\overline{\varphi}_x(\sigma)$ has the same set of valleys and peaks as $\sigma$. Let 
$$
\Delta_{\bar{\varphi}_x}(\sigma)=(\231\bar{\varphi}_x(\sigma),\312\bar{\varphi}_x(\sigma))
-(\231\sigma,\312\sigma).
$$ 
Then we have,
\begin{numcases}{\Delta_{\bar{\varphi}_x}(\sigma) =}
  (-1, 1)& if $l_{\sigma}(x)$  is even\nonumber
   \\
   (1, -1)&if  $l_{\sigma}(x)$ is odd \nonumber
\end{numcases}
\end{lem}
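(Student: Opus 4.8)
The plan is to mirror the proof of Lemma~\ref{lem3} line by line, using the \textbf{Fact} that for a VOP pair $\{2i-1,2i\}$ exactly one of $a(2i-1),a(2i)$ is even. This dichotomy selects case (1) or case (2) of the definition of $\bar\varphi_x$, and within each the two subcases are governed by the parity of $l_\sigma(x)$. I would first record the basic observation that $\bar\varphi_x$ relocates a single letter — namely $2i$ in case (1) and $2i-1$ in case (2) — without altering the relative order of any other two letters.

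To prove that the peak/valley set is preserved I would argue case by case. In case (1) the letter $2i$ occupies a double-ascent (resp. double-descent) position, flanked in the factorization $\sigma=\tau_3'\tau_2'\tau_1'(2i)\tau_1\tau_2\tau_3$ by maximal runs $\tau_j,\tau_j'$ each consisting of letters uniformly larger or uniformly smaller than $2i$; since $2i$ only hops across such uniform runs, every other letter keeps both of its neighbours' relative heights, hence its type, and $2i$ lands in a slot of the same double-ascent/double-descent kind. In case (2) the moved letter $2i-1$ is itself the peak or valley of the VOP pair, so preservation is not automatic; here it is guaranteed by the reconstruction $w_0\to w_{01}\to w_1\to w$, which is built precisely so that $2i-1$ re-enters as the same peak or valley and every other peak/valley is reinstated in order. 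Together these give the first assertion.

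For $\Delta_{\bar\varphi_x}(\sigma)$ I would track only the embracings of the moved letter, all others being untouched. Recall $\312\sigma=\sum_i l_\sigma(\sigma_i)$ counts left-embracings and $\231\sigma=\sum_i r_\sigma(\sigma_i)$ right-embracings. In case (1) with $l_\sigma(x)$ even, sliding $2i$ rightward past $\tau_1\tau_2$ carries exactly one embracing pair — the junction of the larger run $\tau_1$ with the smaller run $\tau_2$, the only embracing pair inside $\tau_1\tau_2$ — from the right of $2i$ to its left, turning one $\231$-embracing into a $\312$-embracing and yielding $\Delta=(-1,1)$; the odd subcase slides $2i$ leftward past $\tau_2'\tau_1'$ and reverses the transfer, giving $(1,-1)$. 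Case (2) produces the same one-unit transfer for $2i-1$, the direction again dictated by the parity of $l_\sigma(x)$ through the ``valleys minus peaks'' insertion rule, exactly paralleling cases (c)/(d) of Lemma~\ref{lem3}.

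The hard part will be case (2): I must verify that the rebuild really realizes a single-unit transfer while fixing $\lema$, the peak/valley set, and every other embracing. This reduces to installing the signature bookkeeping of Lemma~\ref{lem:signature} in the present setting, with the valley/peak counts $f(i),g(i)$ and $a(i)=f(i)-g(i)+1$ playing the roles of $S_o(i),S_e(i)$ and the signature function. One then checks that prescribing ``valleys minus peaks to the left of $2i-1$'' to be $l_\sigma(2i-1)\pm1$ is equivalent to flipping the parity of the embracing number of $2i-1$ by one, after which the $\Delta$ computation closes exactly as in case (1).
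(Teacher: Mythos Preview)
Your proposal is correct and takes essentially the same approach as the paper, which gives no detailed proof here but simply states that the argument follows that of Lemma~\ref{lem3}; your plan to mirror that proof case by case, tracking the single relocated letter and the transfer of one embracing pair across it, is exactly this. One minor slip: since $\sigma\in\overline{\Y}_{2n+1}^*$ has no double descents (and every even letter in an odd--odd descent permutation is automatically a double ascent), the ``(resp.\ double-descent)'' alternative in your case~(1) never occurs, but this does not affect the argument.
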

\begin{lem}\label{lem02}
For each $\hat{\sigma}\in\widehat{\Y}_{2n+1}^*$, let $Orb(\hat{\sigma})$ be the orbit of $\hat{\sigma}$ under $\bar{\varphi}_H$. Then
$$
\sum_{\sigma\in Orb(\hat{\sigma})}p^{\res\sigma}q^{\312\sigma}=(p+q)^{2k}p^{\res\sigma-2k}q^{\312\sigma}. 
$$
\end{lem}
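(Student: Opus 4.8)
The plan is to mirror the proof of Lemma~\ref{lem4}, replacing the signature-based action $\varphi_x$ and Lemma~\ref{lem3} by the VOP-action $\bar{\varphi}_x$ and Lemma~\ref{lem01}. Since the involutions $\{\bar{\varphi}_x:x\in H\}$ pairwise commute, they generate a free action of $\Z_2^{|H|}$ whose distinguished representative is the normalized $\mathcal{F}$-permutation $\hat{\sigma}$, so that $\Orb(\hat{\sigma})=\{\bar{\varphi}_X(\hat{\sigma}):X\subseteq H\}$ with distinct subsets giving distinct elements. Writing $k=\des\hat{\sigma}$, the target identity (whose right-hand exponents I read at the representative $\hat{\sigma}$) will then follow from the rewriting
\[
(p+q)^{2k}\,p^{\res\hat{\sigma}-2k}q^{\312\hat{\sigma}}=p^{\res\hat{\sigma}}q^{\312\hat{\sigma}}\,(1+p^{-1}q)^{2k},
\]
once the factor $(1+p^{-1}q)^{2k}$ is recognized as the product of the per-pair contributions over $H$.

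First I would verify that $|H|=2k$. Because $\Y_{2n+1}$ consists of odd-odd descent permutations, every internal peak is a descent top and every valley is a descent bottom, so all peak and valley values are odd; moreover $\sigma\in\Y_{2n+1}^*$ ends with an odd value, so the terminal peak is odd as well. An $\mathcal{F}$-permutation with $k$ descents has exactly $k+1$ peaks and $k$ valleys, hence $2k+1$ odd values located at peaks or valleys. The maximum value $2n+1$ is always a peak and is the unique peak/valley value not of the form $2i-1$ with $i\in[n]$; every other peak/valley value $2i-1$ produces a VOP pair $\{2i-1,2i\}$. Therefore $|H|=(2k+1)-1=2k$.

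Since $\hat{\sigma}$ is normalized, every VOP pair $x\in H$ has even embracing number $l_{\hat{\sigma}}(x)$, so by Lemma~\ref{lem01} applying $\bar{\varphi}_x$ changes $(\res,\312)$ by $(-1,+1)$, i.e.\ multiplies the weight $p^{\res}q^{\312}$ by $p^{-1}q$; note that \emph{all} VOP pairs contribute this same factor, which is precisely why only $\res$ drops (by $2k$) while $\312$ is unchanged. The crux of the argument, and the step I expect to be the main obstacle, is to promote this single-pair statement to the whole group: I must show that for any $X\subseteq H$ one has $(\res,\312)(\bar{\varphi}_X(\hat{\sigma}))=(\res\hat{\sigma}-|X|,\ \312\hat{\sigma}+|X|)$. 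As in the passage from Lemma~\ref{lem3} to Lemma~\ref{lem4}, this rests on the fact that $\bar{\varphi}_x$ localizes its effect to the embracings attached to $x$ and leaves the parity of $l_\sigma(y)$ unchanged for every other VOP pair $y\neq x$; applying the commuting involutions one pair at a time, each $x\in X$ then contributes exactly the factor $p^{-1}q$ it would contribute to $\hat{\sigma}$ itself. Granting this independence, the orbit sum factorizes as
\[
\sum_{\sigma\in\Orb(\hat{\sigma})}p^{\res\sigma}q^{\312\sigma}=p^{\res\hat{\sigma}}q^{\312\hat{\sigma}}\sum_{X\subseteq H}(p^{-1}q)^{|X|}=p^{\res\hat{\sigma}}q^{\312\hat{\sigma}}(1+p^{-1}q)^{2k},
\]
which equals $(p+q)^{2k}p^{\res\hat{\sigma}-2k}q^{\312\hat{\sigma}}$, as required.
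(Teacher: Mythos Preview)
Your proposal is correct and follows exactly the route the paper indicates: the paper gives no separate proof for this lemma, only the sentence ``Following the similar proof of Lemma~\ref{lem3} and Lemma~\ref{lem4}.'' Your explicit verification that $|H|=2k$ and your observation that in this setting \emph{every} VOP pair of a normalized $\mathcal{F}$-permutation contributes the same factor $p^{-1}q$ (so the orbit sum yields $(1+p^{-1}q)^{2k}$ rather than the split $(1+pq^{-1})^k(1+p^{-1}q)^k$ of Lemma~\ref{lem4}) correctly account for the asymmetric exponent $\res\hat\sigma-2k$ on the right-hand side and fill in details the paper omits.
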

Combine Lemma~\ref{lem01} and Lemma~\ref{lem02}, we obtain the following Theorem.
\begin{thm}\label{oddodd:gammafactorization}
We have
$$
\gamma_{n,k}(1,p,q)=(p+q)^{2k}\sum_{\sigma\in\widehat{\Y}^*_{2n+1,k}}p^{\res\sigma-2k}q^{\312\sigma}.
$$
\end{thm}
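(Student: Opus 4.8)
The plan is to re-sum $\gamma_{n,k}(1,p,q)$ orbit-by-orbit under the elementary abelian $2$-group generated by the involutions $\bar\varphi_x$. By Theorem~\ref{oddodd:gamma} evaluated at $a=1$ we have $\gamma_{n,k}(1,p,q)=\sum_{\sigma\in\overline{\Y}^*_{2n+1,k}}p^{\res\sigma}q^{\312\sigma}$, so the assertion is precisely the statement that this sum, grouped according to the orbits of the $\bar\varphi_x$, matches the right-hand side through Lemma~\ref{lem02}. Thus the whole argument reduces to organizing $\overline{\Y}^*_{2n+1,k}$ into orbits and summing Lemma~\ref{lem02} over them.

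First I would check that $G:=\Z_2^{H}$, generated by the involutions $\bar\varphi_x$ over the VOP pairs $x\in H$ of $\sigma$, genuinely acts on $\overline{\Y}^*_{2n+1,k}$. By Lemma~\ref{lem01} every $\bar\varphi_x$ preserves the set of peaks and valleys of its argument, hence preserves both $\des=k$ and the collection $H$ of VOP pairs itself; together with the already-noted commutativity $\bar\varphi_x\bar\varphi_y=\bar\varphi_y\bar\varphi_x$, this makes $\bar\varphi_H=\prod_{x\in H}\bar\varphi_x$ a well-defined $G$-action whose orbits partition $\overline{\Y}^*_{2n+1,k}$.

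The decisive step is to show that each orbit contains exactly one normalized $\mathcal{F}$-permutation. Here the point is that $\bar\varphi_x$ flips the parity of the embracing number $l_\sigma(x)$ at its own VOP pair while leaving the embracing parities at all other VOP pairs fixed: this is forced by Lemma~\ref{lem01}, since $\bar\varphi_x$ is an involution whose two branches are distinguished exactly by the parity of $l_\sigma(x)$. Consequently the parity vector $(l_\sigma(x)\bmod 2)_{x\in H}$ sets up an equivariant bijection from each orbit onto $G=\Z_2^{H}$, and the single element of the orbit with all parities even is precisely the normalized representative $\hat\sigma\in\widehat{\Y}^*_{2n+1,k}$.

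Finally, writing $\overline{\Y}^*_{2n+1,k}=\bigsqcup_{\hat\sigma\in\widehat{\Y}^*_{2n+1,k}}\Orb(\hat\sigma)$ and applying Lemma~\ref{lem02} (in which the right-hand exponents are read at the representative $\hat\sigma$) to each orbit yields
\begin{align*}
\sum_{\sigma\in\overline{\Y}^*_{2n+1,k}}p^{\res\sigma}q^{\312\sigma}
&=\sum_{\hat\sigma\in\widehat{\Y}^*_{2n+1,k}}\ \sum_{\sigma\in\Orb(\hat\sigma)}p^{\res\sigma}q^{\312\sigma}\\
&=(p+q)^{2k}\sum_{\hat\sigma\in\widehat{\Y}^*_{2n+1,k}}p^{\res\hat\sigma-2k}q^{\312\hat\sigma},
\end{align*}
which is the claim. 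I expect the main obstacle to sit entirely in the decisive step and in what underlies Lemma~\ref{lem02}: one must confirm that $\bar\varphi_x$ toggles only its own embracing parity (so that orbits really are full hypercubes with a unique even-parity vertex) and that the number of VOP pairs is exactly $|H|=2k$ --- equivalently that an $\mathcal{F}$-permutation with $k$ descents has $k+1$ peaks and $k$ valleys, of which, via the stated Fact on $\{a(2i-1),a(2i)\}$, exactly $2k$ give rise to VOP pairs. Once these structural facts are in hand, the orbit sum in Lemma~\ref{lem02} is the geometric-series identity $(1+q/p)^{2k}\,p^{\res\hat\sigma}q^{\312\hat\sigma}=(p+q)^{2k}p^{\res\hat\sigma-2k}q^{\312\hat\sigma}$ driven by the $(-1,+1)$ shift of Lemma~\ref{lem01}, and the proof closes.
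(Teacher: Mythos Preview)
Your proposal is correct and follows essentially the same approach as the paper: partition $\overline{\Y}^*_{2n+1,k}$ into orbits under the commuting involutions $\bar\varphi_x$, identify the unique normalized representative in each orbit, and sum Lemma~\ref{lem02} over all orbits (exactly as the paper does in the parallel $\X$ case via Lemma~\ref{lem4}). Your additional remarks on why the orbits are full hypercubes and why $|H|=2k$ simply make explicit what the paper leaves implicit in its one-line ``Combine Lemma~\ref{lem01} and Lemma~\ref{lem02}''.
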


Let
\begin{subequations}
\begin{align}
\overline{Y}_n(a,p,q,t)&=\sum_{\sigma\in\overline{\Y}^*_{2n+1}}a^{\lema\,\sigma}p^{\res\sigma}q^{\312\sigma}t^{\des\sigma}\\
\widehat{Y}_n(p,q,t)&=\sum_{\sigma\in\widehat{\Y}^*_{2n+1}}p^{\res\sigma}q^{\312\sigma}t^{\des\sigma}.
\end{align}
From Theorems~\ref{oddodd:cf}, \ref{oddodd:gamma} and \ref{oddodd:gammafactorization}  we derive immediately 
the following identities.
\begin{cor}
We have
\begin{align}
\overline{Y}_{n}(a,p,q, t)&=\overline{X}_{n}(a,1, p,q, t),\\
\widehat{Y}_n(p,q,t)&=\widehat{X}_{n}(1,1, p,q, t).
\end{align}
\end{cor}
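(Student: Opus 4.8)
The plan is to reduce both displayed identities to coefficient-wise equalities of $\gamma$-coefficients and then invoke the master comparison of Theorem~\ref{oddodd:cf}. The starting observation is that, directly from the definitions, $\overline{Y}_n(a,p,q,t)=\sum_{k\ge0}\big(\sum_{\sigma\in\overline{\Y}^{*}_{2n+1,k}}a^{\lema\,\sigma}p^{\res\sigma}q^{\312\sigma}\big)t^k$ is the $t$-generating polynomial of the Y-side $\gamma$-coefficients of Theorem~\ref{oddodd:gamma}, and likewise $\overline{X}_n(a,1,p,q,t)=\sum_{k\ge0}\gamma_{n,k}(a,1,p,q)\,t^k$ packages the X-side $\gamma$-coefficients of Theorem~\ref{thm:gamma} specialized to $b=1$. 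Thus each identity is equivalent to an equality of $\gamma$-coefficients for every $k$, and the whole argument lives one level down, at the level of the $\gamma$'s.

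For the first identity I would argue by uniqueness of the $\gamma$-expansion. Theorem~\ref{oddodd:cf} gives $Y_n(a,p,q,y,t)=X_n(a,1,1,1,p,q,y,t)$ as an identity in $y,t$ over $\Z[a,p,q]$, while Theorems~\ref{oddodd:gamma} and~\ref{thm:gamma} expand the two sides in the common family $\{t^k(1+yt)^{n-2k}\}_{0\le k\le\lfloor n/2\rfloor}$. This family is linearly independent, since the top $y$-degree of $t^k(1+yt)^{n-2k}$ is $y^{n-2k}$ and the exponents $n-2k$ are pairwise distinct; hence matching coefficients forces $\gamma_{n,k}^{Y}(a,p,q)=\gamma_{n,k}^{X}(a,1,p,q)$ for every $k$, and summing against $t^k$ yields $\overline{Y}_n(a,p,q,t)=\overline{X}_n(a,1,p,q,t)$.

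For the second identity I would specialize the first to $a=1$, obtaining $\gamma_{n,k}^{Y}(1,p,q)=\gamma_{n,k}^{X}(1,1,p,q)$, and then insert the two factorizations: Theorem~\ref{thm:main-pq} writes the X-coefficient as $(p+q)^{2k}\sum_{\sigma\in\widehat{\X}_{2n,k}}p^{\231\sigma-k}q^{\312\sigma-k}$, while Theorem~\ref{oddodd:gammafactorization} writes the Y-coefficient as $(p+q)^{2k}$ times the corresponding normalized sum over $\widehat{\Y}^{*}_{2n+1,k}$. Cancelling the common factor $(p+q)^{2k}$, which is legitimate in the domain $\Z[p,q]$, equates the two normalized sums; restoring the common monomial shift then recovers $\sum_{\sigma\in\widehat{\Y}^{*}_{2n+1,k}}p^{\res\sigma}q^{\312\sigma}=\sum_{\sigma\in\widehat{\X}_{2n,k}}p^{\231\sigma}q^{\312\sigma}$, and summing against $t^k$ gives $\widehat{Y}_n(p,q,t)=\widehat{X}_n(1,1,p,q,t)$. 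The step I expect to be the main obstacle is precisely this reconciliation: after cancelling $(p+q)^{2k}$ I must check that the two factorization theorems leave the \emph{same} normalized polynomial, i.e.\ that the shift in the exponents of $p$ and $q$ is distributed identically on the $\widehat{\Y}^{*}$ and $\widehat{\X}$ sides. Concretely this means verifying that the correspondence $\res\leftrightarrow\231$, $\312\leftrightarrow\312$ implicit in Theorem~\ref{oddodd:cf} carries the per-VOP-pair contributions on $\widehat{\Y}^{*}_{2n+1,k}$ to the per-descent contributions on $\widehat{\X}_{2n,k}$; once that exponent bookkeeping is confirmed to match, the cancellation delivers the stated identity immediately.
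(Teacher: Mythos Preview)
Your approach is exactly the paper's: the paper's entire proof is the sentence ``From Theorems~\ref{oddodd:cf}, \ref{oddodd:gamma} and~\ref{oddodd:gammafactorization} we derive immediately the following identities,'' and you simply spell out how that deduction runs --- match the two $\gamma$-expansions of $Y_n(a,p,q,y,t)=X_n(a,1,1,1,p,q,y,t)$ using the linear independence of $\{t^k(1+yt)^{n-2k}\}$, then for the second line divide both factorizations by $(p+q)^{2k}$.

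The obstacle you flag, however, is not a routine bookkeeping check that you happened not to carry out: it is a genuine discrepancy in the paper as written. Theorem~\ref{thm:main-pq} records the $X$-side shift as $(p,q)\mapsto(p^{\,\231\sigma-k},q^{\,\312\sigma-k})$, whereas Theorem~\ref{oddodd:gammafactorization} (via Lemma~\ref{lem02}) records the $Y$-side shift as $(p,q)\mapsto(p^{\,\res\sigma-2k},q^{\,\312\sigma})$. Cancelling $(p+q)^{2k}$ therefore yields
\[
\sum_{\sigma\in\widehat{\X}_{2n,k}}p^{\,\231\sigma-k}q^{\,\312\sigma-k}
\;=\;
\sum_{\sigma\in\widehat{\Y}^*_{2n+1,k}}p^{\,\res\sigma-2k}q^{\,\312\sigma},
\]
which is \emph{not} equivalent to $\sum_{\widehat{\X}_{2n,k}}p^{\,\231\sigma}q^{\,\312\sigma}=\sum_{\widehat{\Y}^*_{2n+1,k}}p^{\,\res\sigma}q^{\,\312\sigma}$; one would have to multiply by $(pq)^k$ on the left and by $p^{2k}$ on the right. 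So either one of the two factorization statements has a typo in its exponent normalization, or the second identity of the Corollary is meant in a shifted sense that the paper does not make explicit. Your instinct to pause here is correct; the paper glosses over precisely this point.
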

\end{subequations}
\begin{remark}
The \emph{normalized
$\mathcal{E}$-permutations} and 
\emph{normalized $\mathcal{F}$-permutations} are  two  new models for the normalized median Genocchi numbers. It would be interesting to 
establish  connections of these models with the well-known  \emph{Dellac configurations} 
for normalized Genocchi numbers  
in \cite{HZ99, Bi14}.
\end{remark}

\bibliography{Genocchi-1.bib}{}

\bibliographystyle{plain}
\end{document}